\newtheorem{thm}{Theorem}[section]
\newtheorem{lem}[thm]{Lemma}
\newtheorem{prop}[thm]{Proposition}
\theoremstyle{definition}
\newtheorem{defn}[thm]{Definition}
\newtheorem{rem}[thm]{Remark}
\newtheorem*{defn*}{Definition}
\newtheorem*{rems*}{Remarks}
\newtheorem*{rem*}{Remark}
\numberwithin{equation}{section}
\DeclareMathOperator {\Symp} {Symp}
\DeclareMathOperator {\Span} {span}
\def \algrest {\left [\Symp (\mathbb R^{2n})\right ]_{N}}
\def \algrestall {\bigl [\Lambda ^2(\mathbb R^{2n})\bigr ]_N}
\def \algrestclosed {\bigl [Z^2(\mathbb R^{2n})\bigr ]_N}
\def \local-algebra {\Lambda ^0(\mathbb R^2)/(\nabla H)}
\begin{document}

\title[Symplectic $S_{\mu}$ singularities] {Symplectic $S_{\mu}$ singularities}
\author{Wojciech Domitrz}
\address{Warsaw University of Technology\\
Faculty of Mathematics and Information Science\\
Plac Politechniki 1\\
00-661 Warsaw\\
Poland\\}

\email{domitrz@mini.pw.edu.pl}

\author{\.{Z}aneta Tr\c{e}bska}
\address{Warsaw University of Technology\\
Faculty of Mathematics and Information Science\\
Plac Politechniki 1\\
00-661 Warsaw\\
Poland\\}

\email{ztrebska@mini.pw.edu.pl}

\thanks{The work of W. Domitrz was supported by Polish MNiSW grant no. N N201 397237
}

\subjclass{Primary 53D05. Secondary 14H20, 58K50, 58A10.}

\keywords{symplectic manifold, curves, local symplectic algebra,
algebraic restrictions, relative Darboux theorem, singularities}

\maketitle

\begin{abstract}
We study the local symplectic algebra of the $1$-dimensional
isolated complete intersection singularity of type $S_{\mu}$. We
use the method of algebraic restrictions to classify symplectic
$S_{\mu}$ singularities. We distinguish these symplectic
singularities by discrete symplectic invariants. We also give the
geometric description of them.
\end{abstract}

\section{Introduction}

In this paper we study the symplectic classification of the
$1$-dimensional complete intersection singularity of type
$S_{\mu}$ in the symplectic space $(\mathbb R^{2n},\omega)$. We
recall that $\omega$ is a symplectic form if $\omega$ is a smooth
nondegenerate closed 2-form, and $\Phi:\mathbb{R}^{2n}
\rightarrow\mathbb{R}^{2n}$ is a symplectomorphism if $\Phi$ is a
diffeomorphism and $\Phi ^* \omega=\omega$.

\begin{defn} \label{symplecto}
Let $N_1, N_2$ be germs of subsets of symplectic space $(\mathbb{R}^{2n}, \omega)$. $N_1, N_2$
are \textbf{symplectically equivalent} if there exists a symplectomorphism-germ $\Phi:(\mathbb{R}^{2n}, \omega) \rightarrow(\mathbb{R}^{2n}, \omega)$
such that $\Phi(N_1)=N_2$.
\end{defn}

\medskip

The problem of symplectic classification of singular curves was
introduced by V. I. Arnold in \cite{Ar1}. Arnold proved that the
$A_{2k}$ singularity of a planar curve (the orbit with respect to
standard $\mathcal A$-equivalence of parameterized curves) split
into exactly $2k+1$ symplectic singularities (orbits with respect
to symplectic equivalence of parameterized curves). He
distinguished different symplectic singularities by different
orders of tangency of the parameterized curve to the
\emph{nearest} smooth Lagrangian submanifold. Arnold posed a
problem of expressing these new symplectic invariants in terms of
the local algebra's interaction with the symplectic structure and
he proposed to call this interaction the {\bf local symplectic
algebra}.

In \cite{IJ1} G. Ishikawa and S. Janeczko classified symplectic singularities of curves in the
$2$-dimensional symplectic space. All simple curves in this classification are quasi-homogeneous. A symplectic form on a $2$-dimensional manifold
is a special case of a volume form on a smooth manifold. The generalization of results
in \cite{IJ1} to volume-preserving classification of singular varieties and maps  in arbitrary dimensions was obtained in \cite{DR}.
The orbit of action of all diffeomorphism-germs agrees with volume-preserving orbit or splits into two volume-preserving orbits
(in the case $\mathbb K=\mathbb R$) for germs which satisfy a special weak form of quasi-homogeneity e.g. the weak quasi-homogeneity of varieties is a
quasi-homogeneity with non-negative weights $w_i\ge0$ and $\sum_i w_i>0$.

Symplectic singularity is stably simple if it is simple and remains simple if the ambient symplectic space is symplectically embedded
(i.e. as a symplectic submanifold) into a larger symplectic space.
In \cite{K} P. A. Kolgushkin classified the stably simple symplectic singularities of parameterized curves
(in the $\mathbb C$-analytic category). All stably simple symplectic singularities of curves are quasi-homogeneous too.

In \cite{DJZ2} new symplectic invariants of singular
quasi-homogeneous  subsets of a symplectic space were explained by
the algebraic restrictions of the symplectic form to these
subsets. The algebraic restriction is an equivalence class of the
following relation on the space of differential $k$-forms:

Differential $k$-forms $\omega_1$ and $\omega_2$ have the same
{\bf algebraic restriction} to a subset $N$ if
$\omega_1-\omega_2=\alpha+d\beta$, where $\alpha$ is a $k$-form
vanishing on $N$ and $\beta$ is a $(k-1)$-form vanishing on $N$.

In \cite{DJZ2} the generalization of Darboux-Givental theorem (\cite{ArGi})
to germs of arbitrary subsets of the symplectic space was obtained. This result reduces
the problem of symplectic classification of germs of quasi-homo\-ge\-neous subsets to
the problem of classification of algebraic restrictions of symplectic
forms to these subsets. For non-quasi-homogeneous subsets there is one more cohomological invariant except the algebraic restriction (\cite{DJZ2},
\cite{DJZ1}). The dimension of the space of algebraic restrictions of closed $2$-forms to a $1$-dimensional quasi-homogeneous isolated complete
intersection singularity $C$ is equal to the multiplicity of $C$ (\cite{DJZ2}).
In \cite{D} it was proved that the space of algebraic restrictions
of closed
$2$-forms to a $1$-dimensional (singular) analytic variety is finite-dimensional.

In \cite{DJZ2} the method of algebraic restrictions was applied to
various classification problems in a symplectic space. In
particular the complete symplectic classification of the classical
$1$-dimensional $S_5$ singularity were obtained. Most of different
symplectic singularity classes were distinguished by new discrete
symplectic invariants: the index of isotropy and the symplectic
multiplicity.

In this paper we obtain the complete symplectic classification of
the classical isolated complete intersection singularity $S_{\mu}$
for $\mu>5$ using the method of algebraic restrictions (Theorem
\ref{s-main}). We calculate discrete symplectic invariants for
this classification (Theorems \ref{lagr-s} and \ref{klas_s}) and
we present geometric descriptions of symplectic orbits (Theorem
\ref{geom-cond-s}).

In \cite{DT} following ideas from \cite{Ar1} and \cite{D}  new
discrete symplectic invariants - the Lagrangian tangency orders
were introduced and used to distinguish symplectic
singularities of simple planar curves of type $A-D-E$, symplectic
$S_5$ and $T_7$ singularities.

In this paper using Lagrangian tangency orders we are able to give
detailed classification of $S_{\mu}$
 singularity for $\mu>5$ (Theorem \ref{lagr-s}) and to present a
geometric description of its symplectic orbits (Theorem
\ref{geom-cond-s}).

The paper is organized as follows. In section \ref{method} we recall the method of algebraic restrictions.
In section \ref{discrete} we present  discrete symplectic invariants.
Symplectic classification of $S_{\mu}$ singularity is studied in section \ref{S}.

\section{The method of algebraic restrictions}
\label{method}

In this section we present basic facts on the method of algebraic
restrictions. The proofs of all results of this section can be
found in \cite{DJZ2}.

  Given a germ of a non-singular manifold $M$
denote by $\Lambda ^p(M)$ the space of all germs at $0$ of
differential $p$-forms on $M$. Given a subset $N\subset M$
introduce the following subspaces of $\Lambda ^p(M)$:
$$\Lambda ^p_N(M) = \{\omega \in \Lambda ^p(M): \ \ \omega (x)=0 \ \text {for any} \ x\in N \};$$
$$\mathcal A^p_0(N, M) = \{\alpha  + d\beta : \ \ \alpha
\in \Lambda _N^p(M), \ \beta \in \Lambda _N^{p-1}(M).\}$$ The
relation $\omega (x)=0$ means that the $p$-form $\omega $
annihilates any $p$-tuple of vectors in $T_xM$, i.e. all
coefficients of $\omega $ in some (and then any) local coordinate
system vanish at the point $x$.

\smallskip

\begin{defn}
\label{main-def} Let $N$ be the germ of a subset of $M$ and let
$\omega \in \Lambda ^p(M)$. The {\bf algebraic restriction} of
$\omega $ to $N$ is the equivalence class of $\omega $ in $\Lambda
^p(M)$, where the equivalence is as follows: $\omega $ is
equivalent to $\widetilde \omega $ if $\omega - \widetilde \omega
\in \mathcal A^p_0(N, M)$.
\end{defn}

\noindent {\bf Notation}. The algebraic restriction of the germ of
a $p$-form $\omega $ on $M$ to the germ of a subset $N\subset M$
will be denoted by $[\omega ]_N$. Writing $[\omega ]_N=0$ (or
saying that $\omega $ has zero algebraic restriction to $N$) we
mean that $[\omega ]_N = [0]_N$, i.e. $\omega \in A^p_0(N, M)$.

\medskip



Let $M$ and $\widetilde M$ be non-singular equal-dimensional
manifolds and let $\Phi: \widetilde M\to M$ be a local
diffeomorphism. Let $N$ be a subset of $M$. It is clear that $\Phi
^*\mathcal A_0^p(N, M) = \mathcal A_0^p(\Phi ^{-1}(N), \widetilde
M)$. Therefore the action of the group of diffeomorphisms can be
defined as follows: $\Phi ^*([\omega ]_N) = [\Phi ^*\omega ]_{\Phi
^{-1}(N)},$ where $\omega $ is an arbitrary $p$-form on $M$.

\begin{defn}Two algebraic restrictions
$[\omega ]_N$ and $[\widetilde \omega ]_{\widetilde N}$ are called {\bf
diffeomorphic} if there exists the germ of a diffeomorphism $\Phi:
\widetilde M\to M$ such that $\Phi(\widetilde N)=N$ and  $\Phi ^*([\omega ]_N) =[\widetilde \omega ]_{\widetilde N}$.
\end{defn}

\smallskip

\begin{rem}
The above definition does not depend on the choice of $\omega$ and
$\widetilde \omega$ since a local diffeomorphism maps forms with
zero algebraic restriction to $N$ to forms with zero algebraic
restrictions to $\tilde N$. If $M=\widetilde M$ and $N = \widetilde N$ then the definition of
diffeomorphic algebraic restrictions reduces to the following one:
two algebraic restrictions $[\omega ]_N$ and $[\widetilde \omega
]_N$ are diffeomorphic if there exists a local symmetry $\Phi $ of
$N $ (i.e. a local diffeomorphism preserving $N$) such that $[\Phi
^*\omega ]_N = [\widetilde \omega ]_N$.
\end{rem}

\begin{defn}
A subset $N$ of $\mathbb R^m$ is quasi-homogeneous if there exists
a coordinate system $(x_1,\cdots,x_m)$ on $\mathbb R^m$ and
positive numbers $\lambda_1,\cdots,\lambda_n$ such that for any
point $(y_1,\cdots,y_m)\in \mathbb R^m$ and any $t\in \mathbb R$
if $(y_1,\cdots,y_m)$ belongs to $N$ then a point
$(t^{\lambda_1}y_1,\cdots,t^{\lambda_m}y_m)$ belongs to $N$.
\end{defn}
The method of algebraic restrictions applied to singular
quasi-homogeneous subsets is based on the following theorem.

\begin{thm}[Theorem A in \cite{DJZ2}] \label{thm A}
Let $N$ be the germ of a quasi-homogeneous subset of $\mathbb
R^{2n}$. Let $\omega _0, \omega _1$ be germs of symplectic forms
on $\mathbb R^{2n}$ with the same algebraic restriction to $N$.
There exists a local diffeomorphism $\Phi $ such that $\Phi (x) =
x$ for any $x\in N$ and $\Phi ^*\omega _1 = \omega _0$.

Two germs of quasi-homogeneous subsets $N_1, N_2$ of a fixed
symplectic space $(\mathbb R^{2n}, \omega )$ are symplectomorphic
if and only if the algebraic restrictions of the symplectic form
$\omega $ to $N_1$ and $N_2$ are diffeomorphic.

\end{thm}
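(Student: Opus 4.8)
The plan is to prove the first assertion by the Moser homotopy (path) method, and then to deduce the second assertion formally from the first together with the definition of diffeomorphic algebraic restrictions. Since $\omega_0$ and $\omega_1$ have the same algebraic restriction to $N$, we may write $\omega_1 - \omega_0 = \alpha + d\beta$ with $\alpha \in \Lambda^2_N(\mathbb R^{2n})$ and $\beta \in \Lambda^1_N(\mathbb R^{2n})$. Both $\omega_0$ and $\omega_1$ are closed, so $\alpha = (\omega_1-\omega_0) - d\beta$ is closed as well. The first step is a relative Poincar\'e lemma adapted to the quasi-homogeneous structure: letting $E = \sum_i \lambda_i x_i \partial_{x_i}$ be the Euler field whose rescaling flow $g_t(x) = (t^{\lambda_1}x_1,\dots,t^{\lambda_{2n}}x_{2n})$ preserves $N$, the homotopy operator $h(\cdot) = \int_0^1 t^{-1} g_t^*(\iota_E\, \cdot)\,dt$ satisfies $\mathrm{id} = dh + hd$ on forms vanishing at the origin and, crucially, maps $\Lambda^p_N$ into $\Lambda^{p-1}_N$ (because $g_t$ preserves $N$ and $\iota_E$ preserves vanishing on $N$). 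Applying $h$ to the closed form $\alpha$ gives $\alpha = d(h\alpha)$ with $h\alpha \in \Lambda^1_N$, so that
\[
\omega_1 - \omega_0 = d\gamma, \qquad \gamma := h\alpha + \beta \in \Lambda^1_N(\mathbb R^{2n}).
\]
Thus the difference of the two symplectic forms is exact with a primitive vanishing on $N$.

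Next I would run the Moser argument on the path $\omega_t = \omega_0 + t\,d\gamma$, $t\in[0,1]$. Assuming $\omega_t$ is nondegenerate on a neighborhood of $0$ for every $t$, the equation $\iota_{X_t}\omega_t = -\gamma$ determines a unique time-dependent vector field $X_t$, and Cartan's formula together with $d\omega_t = 0$ gives $\frac{d}{dt}\omega_t + \mathcal L_{X_t}\omega_t = d\gamma + d(\iota_{X_t}\omega_t) = 0$. Integrating $X_t$ to a flow $\Phi_t$ (defined near $0$ for $t\in[0,1]$) yields $\Phi_1^*\omega_1 = \omega_0$, and $\Phi := \Phi_1$ is the desired diffeomorphism. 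The reason $\Phi$ fixes $N$ pointwise is that $\gamma$ vanishes on $N$: at each $x\in N$ the relation $\iota_{X_t}\omega_t = -\gamma$ has vanishing right-hand side, and nondegeneracy of $\omega_t$ forces $X_t(x)=0$, so the flow keeps every point of $N$ fixed.

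I expect the main obstacle to be the nondegeneracy of the entire path $\omega_t$ in a neighborhood of the origin, since the symplectic forms do not form a convex set and $d\gamma$ need not be small. This is exactly where quasi-homogeneity (positivity of the weights $\lambda_i$) is essential: it guarantees both the convergence of the homotopy integral defining $h$ and that $\gamma$ vanishes to sufficiently high order along $N$, so that after shrinking the neighborhood $\omega_t = \omega_0 + t\,d\gamma$ remains nondegenerate for all $t\in[0,1]$. Once this is secured, the remainder is the standard Moser integration.

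Finally, the second assertion follows from the first. If $N_1$ and $N_2$ are symplectomorphic, a symplectomorphism $\Phi$ with $\Phi(N_1)=N_2$ satisfies $\Phi^*([\omega]_{N_2}) = [\Phi^*\omega]_{N_1} = [\omega]_{N_1}$, so the algebraic restrictions are diffeomorphic. Conversely, if $[\omega]_{N_1}$ and $[\omega]_{N_2}$ are diffeomorphic, there is a diffeomorphism $\Psi$ with $\Psi(N_1)=N_2$ and $[\Psi^*\omega]_{N_1} = [\omega]_{N_1}$. Then $\Psi^*\omega$ and $\omega$ are symplectic forms with the same algebraic restriction to the quasi-homogeneous set $N_1$, so by the first assertion there is a diffeomorphism $\Phi$ fixing $N_1$ pointwise with $\Phi^*\Psi^*\omega = \omega$. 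The composition $\Psi\circ\Phi$ then satisfies $(\Psi\circ\Phi)^*\omega = \omega$ and $(\Psi\circ\Phi)(N_1) = \Psi(N_1) = N_2$, i.e. it is a symplectomorphism carrying $N_1$ to $N_2$.
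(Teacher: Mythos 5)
Your overall architecture is the right one, and it is essentially the skeleton of the actual argument (which this paper does not reproduce: Theorem \ref{thm A} is quoted from \cite{DJZ2}, where the proof is carried out): a relative Poincar\'e lemma for quasi-homogeneous sets (cf.\ \cite{DJZ1}) to write $\omega_1-\omega_0=d\gamma$ with $\gamma\in\Lambda^1_N(\mathbb R^{2n})$, then a Moser homotopy whose vector field vanishes on $N$ because $\gamma$ does, and finally the formal deduction of the second assertion from the first. Your homotopy-operator step is sound (since $g_t$ preserves $N$ and $\alpha$ vanishes at points of $N$, indeed $h\alpha\in\Lambda^1_N$), and your proof of the second assertion is correct and is exactly the standard reduction.

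The genuine gap is at the point you yourself flagged: nondegeneracy of the straight-line path $\omega_t=\omega_0+t\,d\gamma$. Your proposed fix --- that quasi-homogeneity makes $\gamma$ vanish to high order along $N$, so that $\omega_t$ is nondegenerate after shrinking the neighborhood --- is false: vanishing of $\gamma$ \emph{on} $N$ gives no control whatsoever of $d\gamma$ \emph{at points of} $N$; in particular $d\gamma(0)=\omega_1(0)-\omega_0(0)$ can be arbitrary. Concretely, take $N=\{0\}\subset\mathbb R^{2n}$ (quasi-homogeneous for any positive weights), $\omega_0=\sum_{i=1}^n dp_i\wedge dq_i$ and $\omega_1=-dp_1\wedge dq_1+\sum_{i\ge 2}dp_i\wedge dq_i$. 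Then $\omega_1-\omega_0=d(-2p_1\,dq_1)$ with $-2p_1\,dq_1\in\Lambda^1_N$, so the algebraic restrictions agree, but $\omega_t$ is degenerate at $t=\tfrac12$ at the origin itself, where shrinking cannot help. Worse, \emph{no} path of symplectic germs with constant algebraic restriction joins $\omega_0$ to $\omega_1$, since the Pfaffians of $\omega_0(0)$ and $\omega_1(0)$ have opposite signs --- yet the theorem's conclusion holds, via the reflection $(p_1,q_1)\mapsto(p_1,-q_1)$, which fixes $N$ pointwise. So what is missing is not a finer estimate but a separate, non-isotopic preliminary step, and this is precisely how \cite{DJZ2} proceeds: by Proposition \ref{zero-at-zero}, equality of algebraic restrictions forces $\omega_0$ and $\omega_1$ to agree at $0$ on $W$, the tangent space to a minimal-dimension nonsingular submanifold containing $N$, and a Givental-type argument then yields a local diffeomorphism fixing $N$ pointwise after which the two forms coincide at the origin. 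Only then is the linear path nondegenerate at $0$ for every $t\in[0,1]$, hence on a fixed neighborhood by compactness in $t$, and your Moser computation (with $X_t$ vanishing on $N$) legitimately finishes the first assertion.
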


\medskip

Theorem \ref{thm A} reduces the problem of symplectic
classification of germs of singular quasi-homogeneous subsets to
the problem of diffeomorphic classification of algebraic
restrictions of the germ of the symplectic form to the germs of
singular quasi-homogeneous subsets.

The geometric meaning of zero algebraic restriction is explained
by the following theorem.

\begin{thm}[Theorem {\bf B} in \cite{DJZ2}] \label{thm B}  {\it The germ of a quasi-homogeneous
 set $N$  of a symplectic space
$(\mathbb R^{2n}, \omega )$ is contained in a non-singular
Lagrangian submanifold if and only if the symplectic form $\omega
$ has zero algebraic restriction to $N$.}\
\end{thm}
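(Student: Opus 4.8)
The plan is to prove the two implications separately, using the relative Darboux Theorem~\ref{thm A} as the main engine for the substantive direction and reserving quasi-homogeneity for that direction only.

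For the ``only if'' direction, I would suppose $N\subset L$ for a non-singular Lagrangian submanifold $L$, and use the Lagrangian form of the Darboux theorem to pick coordinates $(p_1,\dots,p_n,q_1,\dots,q_n)$ in which $\omega=\sum_i dp_i\wedge dq_i$ and $L=\{p_1=\dots=p_n=0\}$. Then $\omega=d\beta$ with $\beta=\sum_i p_i\,dq_i$, and since every coefficient $p_i$ of $\beta$ vanishes identically on $L\supset N$, we have $\beta\in\Lambda^1_N(\mathbb R^{2n})$. Hence $\omega=0+d\beta\in\mathcal A^2_0(N,\mathbb R^{2n})$, that is $[\omega]_N=0$. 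This direction uses only exactness, not quasi-homogeneity.

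For the ``if'' direction, assume $[\omega]_N=0$, so $\omega=\alpha+d\beta$ with $\alpha\in\Lambda^2_N(\mathbb R^{2n})$ and $\beta\in\Lambda^1_N(\mathbb R^{2n})$. First I would reduce to an exact primitive vanishing on $N$: closedness of $\omega$ forces $d\alpha=0$, and a quasi-homogeneous relative Poincaré lemma then lets me write $\alpha=d\gamma$ with $\gamma\in\Lambda^1_N(\mathbb R^{2n})$. The homotopy operator producing $\gamma$ is the one attached to the weighted Euler flow $\delta_t$ that preserves $N$; because $\delta_t(N)=N$, this operator carries forms vanishing on $N$ to forms vanishing on $N$, which is exactly what guarantees $\gamma\in\Lambda^1_N(\mathbb R^{2n})$. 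Setting $\widetilde\beta=\beta+\gamma$ gives $\omega=d\widetilde\beta$ with $\widetilde\beta\in\Lambda^1_N(\mathbb R^{2n})$. Defining $X$ by $i_X\omega=\widetilde\beta$ yields a Liouville field, since $\mathcal L_X\omega=d\,i_X\omega=d\widetilde\beta=\omega$, and $X$ vanishes on $N$ because $\widetilde\beta$ does and $\omega$ is non-degenerate; a short linear-algebra computation at a zero of $X$ shows that the smooth locus of the zero set of a Liouville field is $\omega$-isotropic, so $N$ sits inside an isotropic germ. I would then produce a non-singular Lagrangian $L_0\supset N$ together with a symplectic germ $\omega_0$ for which $L_0$ is Lagrangian; by the ``only if'' direction already proved, $[\omega_0]_N=0=[\omega]_N$, so Theorem~\ref{thm A} supplies a diffeomorphism $\Phi$ fixing $N$ pointwise with $\Phi^*\omega=\omega_0$. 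Then $\omega$ restricted to $\Phi(L_0)$ pulls back to $\omega_0|_{L_0}=0$, so $\Phi(L_0)$ is a non-singular $\omega$-Lagrangian, and it contains $\Phi(N)=N$, completing the argument.

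The main obstacle is the geometric step of upgrading the isotropic germ containing $N$ to a genuine non-singular Lagrangian $L_0$ (equivalently, embedding $N$ in a smooth $n$-dimensional submanifold that can then be normalized to $\{p=0\}$): the Liouville field $X$ only furnishes an a priori singular isotropic zero locus, and passing to a smooth submanifold of the correct dimension through $N$ is precisely where the quasi-homogeneous structure must be exploited again, through the flow $\delta_t$ and the behavior of $X$ near $0$. The relative Poincaré lemma of the first step is the other delicate point, since one must verify that the chosen homotopy operator genuinely preserves the space $\Lambda^1_N(\mathbb R^{2n})$ of forms vanishing on the possibly singular set $N$.
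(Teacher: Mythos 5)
Your ``only if'' direction is correct, and your endgame for the ``if'' direction (find a smooth $n$-dimensional $L_0\supset N$, equip it with an auxiliary symplectic form $\omega_0$ making it Lagrangian, note $[\omega_0]_N=0=[\omega]_N$, and apply Theorem~\ref{thm A} to transport $L_0$ to an $\omega$-Lagrangian containing $N$) is exactly the right use of the relative Darboux theorem. But there is a genuine gap at precisely the step you yourself flag as ``the main obstacle'': you never produce $L_0$, and the Liouville-field mechanism you propose cannot produce it. The zero locus $Z$ of $X$ is in general a singular set containing $N$ through the singular point $0$; knowing that the \emph{smooth locus} of $Z$ is isotropic gives no smooth submanifold through $0$ containing $N$ (indeed $Z$ may be no better than $N$ itself). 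What the argument needs is not an isotropic \emph{germ of a set} but a \emph{non-singular} submanifold of dimension at most $n$ containing $N$, and nothing in the Liouville construction delivers that.

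The missing ingredient is Proposition~\ref{zero-at-zero} (Lemma 2.20 in \cite{DJZ2}), which the paper states and which requires no quasi-homogeneity: if $[\omega]_N=0$, then $\omega\vert_W=0$, where $W\subseteq T_0\mathbb R^{2n}$ is the tangent space to a smooth submanifold $M\supset N$ of minimal dimension (such an $M$ exists since $\mathbb R^{2n}$ itself contains $N$). Non-degeneracy of $\omega(0)$ then forces $W$ to be isotropic, so $\dim M\le n$; extend $M$ to an $n$-dimensional germ $L_0\supset N$, flatten $L_0$ to $\{p_1=\dots=p_n=0\}$ and take $\omega_0=\sum_i dp_i\wedge dq_i$, and run your Theorem~\ref{thm A} argument verbatim. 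This replaces your relative Poincar\'e lemma plus Liouville-field detour entirely: those steps are essentially sound for quasi-homogeneous $N$ with positive weights (the homotopy operator attached to the scaling $\delta_t$ does preserve $\Lambda^1_N$, which is the content of \cite{DJZ1}), but they are superfluous here, and in the repaired proof quasi-homogeneity is used exactly once --- inside Theorem~\ref{thm A}.
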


\begin{prop}[Lemma 2.20 in \cite{DJZ2}]
\label{zero-at-zero} Let $N\subset \mathbb R^m$. Let $W\subseteq
T_0\mathbb R^m$ be the tangent space to some (and then any)
non-singular submanifold containing $N$ of minimal dimension
within such submanifolds. If $\omega $ is the germ of a $p$-form
with zero algebraic restriction to $N$ then $\omega \vert _W = 0$.
\end{prop}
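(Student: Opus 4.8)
The plan is to work in adapted coordinates and to reduce everything to a statement about the differentials at $0$ of functions vanishing on $N$. Let $k=\dim W$ and let $S$ be a non-singular submanifold of minimal dimension containing $N$, so that $\dim S=k$ and $W=T_0S$. I would choose coordinates $(x_1,\dots,x_m)$ on $\mathbb R^m$ in which $S=\{x_{k+1}=\cdots=x_m=0\}$ and $W=\Span(\partial_{x_1},\dots,\partial_{x_k})$; then $N\subset S$, and the covectors $dx_{k+1},\dots,dx_m$ at $0$ span the annihilator $W^{0}=\{\xi\in T_0^*\mathbb R^m:\xi|_W=0\}$.

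The key step is the following consequence of minimality: if $f$ is the germ of a smooth function vanishing on $N$, then $df(0)\in W^{0}$, i.e. $\partial f/\partial x_j(0)=0$ for $j=1,\dots,k$. I would prove this by contradiction. Restricting $f$ to $S$ gives a germ $g=f|_S$ on $S\cong\mathbb R^k$ vanishing on $N$. If $dg(0)\neq 0$, then by the implicit function theorem the zero set $\{g=0\}$ is, near $0$, a non-singular hypersurface of $S$ of dimension $k-1$ that still contains $N$, contradicting the minimality of $\dim S$. Hence $dg(0)=0$, which is exactly the vanishing of $\partial f/\partial x_j(0)$ for $j\le k$. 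This is the heart of the argument and the only place where the minimality of $S$ enters.

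With this lemma in hand the conclusion follows by a direct coordinate computation. Write $\omega=\alpha+d\beta$ with $\alpha\in\Lambda^p_N(\mathbb R^m)$ and $\beta\in\Lambda^{p-1}_N(\mathbb R^m)$. By the definition of $\Lambda^\bullet_N$, all coefficients of $\alpha$ and of $\beta$ in these coordinates vanish at every point of $N$; since $0\in N$ this gives $\alpha(0)=0$, hence $\alpha(0)|_W=0$. Writing $\beta=\sum_I b_I\,dx_{i_1}\wedge\cdots\wedge dx_{i_{p-1}}$, each coefficient $b_I$ vanishes on $N$, so the key lemma yields $\partial b_I/\partial x_j(0)=0$ for $j\le k$. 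Therefore in
\[
(d\beta)(0)=\sum_I\sum_{j=1}^{m}\frac{\partial b_I}{\partial x_j}(0)\,dx_j\wedge dx_{i_1}\wedge\cdots\wedge dx_{i_{p-1}}
\]
every term with a non-zero coefficient contains a factor $dx_j$ with $j>k$, and such a factor restricts to zero on $W$. Consequently $(d\beta)(0)|_W=0$, and combining this with $\alpha(0)|_W=0$ gives $\omega(0)|_W=0$, that is $\omega|_W=0$, as claimed.

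I expect the main obstacle to be the rigorous justification of the minimality lemma: one must argue that a function vanishing on $N$ with non-zero differential at $0$ produces, via the implicit function theorem, a strictly smaller non-singular submanifold through $0$ containing $N$, contradicting minimality. The remaining steps — the definition-chasing that forces $\alpha(0)=0$ and the vanishing of the coefficients of $\beta$ on $N$, together with the wedge-product bookkeeping for $d\beta$ — are routine once the lemma is established.
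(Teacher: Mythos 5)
Your proof is correct. The paper gives no proof of this proposition --- it is imported verbatim as Lemma 2.20 from \cite{DJZ2} --- and your argument (the minimality lemma that $df(0)$ annihilates $W$ for every function germ $f$ vanishing on $N$, proved via the implicit function theorem, followed by the coordinate computation splitting $\omega=\alpha+d\beta$) is essentially the standard one from that reference; note as a small bonus that your key lemma also establishes the parenthetical ``(and then any)'' claim, since for any other minimal submanifold $S'=\{f_{k+1}=\cdots=f_m=0\}$ containing $N$ the differentials $df_i(0)$ must annihilate $W$, forcing $T_0S'=W$.
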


The following result shows that the method of algebraic
restrictions is very powerful tool in symplectic classification of
singular curves.

\begin{thm}[Theorem 2 in \cite{D}]
\label{main-alg} Let $C$ be the germ of a $\mathbb K$-analytic
curve (for $\mathbb K=\mathbb R$ or $\mathbb K=\mathbb C$). Then
the space of algebraic restrictions of germs of closed $2$-forms
to $C$ is a finite dimensional vector space.
\end{thm}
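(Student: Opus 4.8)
The plan is to identify the space of algebraic restrictions with a Kähler--de Rham object attached to the curve and then invoke a finite-length argument. Write $\mathcal O$ for the local ring of analytic function germs at $0\in\mathbb K^m$, let $I\subset\mathcal O$ be the (radical) ideal of $C$, and set $\mathcal O_C=\mathcal O/I$. Since $C$ is reduced, a $p$-form vanishes at every point of $C$ precisely when all its coefficients lie in $I$, so $\Lambda^p_C=I\,\Lambda^p$ and hence
$$\mathcal A^p_0(C)=I\,\Lambda^p+d\!\left(I\,\Lambda^{p-1}\right).$$
I would first record that $[Z^2]_C$ is, tautologically, a linear subspace of the full space of algebraic restrictions $[\Lambda^2]_C=\Lambda^2/\mathcal A^2_0(C)$, so it suffices to prove that $[\Lambda^2]_C$ is finite-dimensional.

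The key step is to show $[\Lambda^p]_C\cong\Omega^p_{\mathcal O_C}$, the module of K\"ahler $p$-differentials of the analytic algebra $\mathcal O_C$. Reducing modulo $I\Lambda^p$ gives the free $\mathcal O_C$-module $\Lambda^p/I\Lambda^p\cong\Omega^p_{\mathcal O}\otimes_{\mathcal O}\mathcal O_C$. The only subtlety is that $d(I\Lambda^{p-1})$ is a priori merely a $\mathbb K$-subspace; but for $f\in I$ and $g\in\mathcal O$ one has $g\,df=d(gf)-f\,dg\equiv d(gf)\pmod{I\Lambda^p}$, and $gf\in I$, so the image of $d(I\Lambda^{p-1})$ in $\Lambda^p/I\Lambda^p$ is exactly the $\mathcal O_C$-submodule generated by $\{\,df\wedge\theta: f\in I,\ \theta\in\Lambda^{p-1}\,\}$. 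These are precisely the defining relations of $\Omega^p_{\mathcal O_C}$, which yields the asserted isomorphism for $p=1,2$. Under it the exterior derivative descends to the K\"ahler differential, and by the Poincar\'e lemma every closed germ is exact, so $[Z^2]_C$ corresponds to the subspace $d\,\Omega^1_{\mathcal O_C}\subseteq\Omega^2_{\mathcal O_C}$.

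It then remains to prove that $\Omega^2_{\mathcal O_C}$ is finite-dimensional over $\mathbb K$. Here one uses essentially that $C$ is one-dimensional and reduced: away from $0$ the germ $C$ is a disjoint union of smooth one-dimensional branches, so $\Omega^1$ is locally free of rank $1$ there and $\Omega^2=\wedge^2\Omega^1$ vanishes. Thus the coherent $\mathcal O_C$-module $\Omega^2_{\mathcal O_C}$ is supported only at the closed point $0$; a finitely generated module over a Noetherian local analytic ring whose support is the maximal ideal has finite length, hence finite $\mathbb K$-dimension. Consequently $\dim_{\mathbb K}[Z^2]_C\le\dim_{\mathbb K}\Omega^2_{\mathcal O_C}<\infty$.

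I expect the main obstacle to be the second step, namely making the identification $[\Lambda^p]_C\cong\Omega^p_{\mathcal O_C}$ fully rigorous --- in particular the passage from the $\mathbb K$-span $d(I\Lambda^{p-1})$ to an honest $\mathcal O_C$-module via the product rule, and the care needed so that the ideal of forms vanishing on the \emph{set} $C$ really is the full ideal $I$ (reducedness). The concluding finite-length argument is where one-dimensionality of the curve enters decisively; for the smooth-locus computation and the coherence/finite-generation statements one invokes standard facts about analytic algebras, valid for both $\mathbb K=\mathbb R$ and $\mathbb K=\mathbb C$.
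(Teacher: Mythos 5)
You should first note a mismatch of targets: this paper does not prove Theorem \ref{main-alg} at all --- it is imported, with citation, as Theorem 2 of \cite{D} --- so the only comparison possible is with the cited source rather than with anything in the present text. Your argument is, in substance, the standard proof of this finiteness statement: identify $[\Lambda^p]_C$ with the (universally finite) K\"ahler differentials $\Omega^p_{\mathcal O_C}=\Lambda^p/\bigl(I\Lambda^p+dI\wedge\Lambda^{p-1}\bigr)$, observe that $d$ descends (this is exactly Proposition \ref{d-wedge} of the paper), and conclude by a support/finite-length argument using that a reduced curve germ is smooth off the origin, so that $\Omega^2_{\mathcal O_C}$ is a finitely generated module killed off the closed point. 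Your product-rule computation showing that the image of $d(I\Lambda^{p-1})$ is an honest $\mathcal O_C$-submodule is correct (and can be shortened: $g\,(df\wedge\theta)=df\wedge(g\theta)$ already lies in the span). You in fact prove the stronger statement that all of $[\Lambda^2]_C$, not just $[Z^2]_C$, is finite-dimensional, consistent with Proposition \ref{s-all}, where $\dim[\Lambda^2]_{S_\mu}=\mu+1$.

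The one place where ``standard facts about analytic algebras, valid for both $\mathbb K=\mathbb R$ and $\mathbb K=\mathbb C$'' is too glib is the real case. Over $\mathbb C$ your chain (analytic Nullstellensatz to get $\Lambda^p_C=I\Lambda^p$ with $I$ radical, coherence, isolated singular locus) is unproblematic; over $\mathbb R$ the ideal of germs vanishing on the set $C$ is the \emph{real} radical, and coherence of real analytic sets fails in general, so the passage from vanishing of $\Omega^2$ on $C\setminus\{0\}$ to $\mathrm{Supp}\,\Omega^2_{\mathcal O_C}\subseteq\{\mathfrak m\}$ needs an actual argument. It can be repaired either by invoking coherence of one-dimensional real analytic sets, or more robustly by complexifying: the real ideal $I$ generates a radical ideal defining a reduced complex curve germ $C^{\mathbb C}$ with $\Omega^2_{\mathcal O_{C^{\mathbb C}}}\cong\Omega^2_{\mathcal O_C}\otimes_{\mathbb R}\mathbb C$, and finite $\mathbb C$-dimension there gives finite $\mathbb R$-dimension. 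With that caveat made explicit, your proof is sound and --- packaging aside (the source works with branch parameterizations of the curve, you with abstract module supports) --- delivers the theorem in the same spirit as \cite{D}.
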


By a {\bf $\mathbb K$-analytic curve} we understand a subset of
$\mathbb K^m$ which is locally diffeomorphic to a $1$-dimensional
(possibly singular) $\mathbb K$-analytic subvariety of $\mathbb
K^m$. Germs of $\mathbb C$-analytic parameterized curves can be
identified with germs of irreducible $\mathbb C$-analytic curves.

We now recall basic properties of
algebraic restrictions which are useful for a description of this
subset (\cite{DJZ2}).

First we can reduce the dimension of the manifold we consider due
to the following propositions.

If the germ of a set $N\subset \mathbb R^m$ is contained in a
non-singular submanifold $M\subset \mathbb R^m$ then the
classification of algebraic restrictions to $N$ of $p$-forms on
$\mathbb R^m$ reduces to the classification of algebraic
restrictions to $N$ of $p$-forms on $M$. At first note that the
algebraic restrictions $[\omega ]_N$ and $[\omega \vert
_{TM}]_{_N}$ can be identified:

\begin{prop}
\label{reduction} Let $N$ be the germ at $0$ of a subset of
$\mathbb R^m$ contained  in a non-singular submanifold $M\subset
\mathbb R^m$ and let $\omega _1, \omega _2$ be $p$-forms on
$\mathbb R^m$. Then $[\omega _1]_N = [\omega _2]_N$ if and only if
$\bigl[\omega _1\vert _{TM}\bigr]_N = \bigl[\omega _2 \vert
_{TM}\bigr]_N$.
\end{prop}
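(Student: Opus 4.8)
The plan is to reduce the equivalence to a statement about a single $p$-form and then to exploit a local linear retraction of $\mathbb R^m$ onto $M$. Since $\mathcal A^p_0(N,M)$ is a linear subspace of $\Lambda^p(M)$ (it is the sum of the vector space $\Lambda^p_N(M)$ with the image $d\,\Lambda^{p-1}_N(M)$), we have $[\omega_1]_N=[\omega_2]_N$ if and only if $\eta:=\omega_1-\omega_2\in\mathcal A^p_0(N,\mathbb R^m)$, and likewise $\bigl[\omega_1\vert_{TM}\bigr]_N=\bigl[\omega_2\vert_{TM}\bigr]_N$ if and only if $i^*\eta\in\mathcal A^p_0(N,M)$, where $i:M\hookrightarrow\mathbb R^m$ is the inclusion and $\omega\vert_{TM}=i^*\omega$. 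Thus it suffices to prove, for an arbitrary germ of a $p$-form $\eta$ on $\mathbb R^m$, that
\[
\eta\in\mathcal A^p_0(N,\mathbb R^m)\ \Longleftrightarrow\ i^*\eta\in\mathcal A^p_0(N,M).
\]

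For the implication ``$\Rightarrow$'' I would simply note that $i^*$ commutes with $d$ and carries forms vanishing on $N$ to forms vanishing on $N$ (because $N\subseteq M$, so a tuple of vectors tangent to $M$ at a point of $N$ is in particular a tuple of vectors of $\mathbb R^m$). Hence $i^*$ maps $\Lambda^p_N(\mathbb R^m)$ into $\Lambda^p_N(M)$ and $\Lambda^{p-1}_N(\mathbb R^m)$ into $\Lambda^{p-1}_N(M)$, so $i^*\mathcal A^p_0(N,\mathbb R^m)\subseteq\mathcal A^p_0(N,M)$. This direction is purely formal.

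For the converse I would choose local coordinates $(x_1,\dots,x_m)$ with $M=\{x_{k+1}=\dots=x_m=0\}$ and introduce the linear retraction $\pi:\mathbb R^m\to M$, $\pi(x_1,\dots,x_m)=(x_1,\dots,x_k,0,\dots,0)$, so that $\pi\circ i=\mathrm{id}_M$. Since $\pi$ restricts to the identity on $M\supseteq N$, pullback by $\pi$ again preserves vanishing on $N$, so $\pi^*$ maps $\mathcal A^p_0(N,M)$ into $\mathcal A^p_0(N,\mathbb R^m)$; assuming $i^*\eta\in\mathcal A^p_0(N,M)$, this yields $\pi^*i^*\eta\in\mathcal A^p_0(N,\mathbb R^m)$. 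It then remains to show that the error term $\mu:=\eta-\pi^*i^*\eta$ also lies in $\mathcal A^p_0(N,\mathbb R^m)$, for then $\eta=\mu+\pi^*i^*\eta$ does too. The relation $\pi\circ i=\mathrm{id}_M$ forces $i^*\mu=0$, which in the chosen coordinates means that every coefficient of $\mu$ indexed by a multi-index $I\subseteq\{1,\dots,k\}$ vanishes on $M$. Writing $\mu=\mu_1+\mu_2$, where $\mu_1$ collects the terms with $I\subseteq\{1,\dots,k\}$ and $\mu_2$ the terms containing at least one $dx_j$ with $j>k$, the coefficients of $\mu_1$ vanish on $M\supseteq N$, so $\mu_1\in\Lambda^p_N(\mathbb R^m)$.

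The heart of the argument, and the step I expect to be the main obstacle, is placing $\mu_2$ in $\mathcal A^p_0(N,\mathbb R^m)$: these normal components need not vanish on $N$, so one cannot throw them into $\Lambda^p_N(\mathbb R^m)$ directly. The key will be to trade a factor $dx_j$ for an exact form whose primitive is divisible by $x_j$. Concretely, for a single term $g\,dx_j\wedge dx_{I'}$ with $j>k$ I would use the identity
\[
g\,dx_j\wedge dx_{I'}=d\bigl(x_j\,g\,dx_{I'}\bigr)-x_j\,dg\wedge dx_{I'}.
\]
Because $j>k$, the function $x_j$ belongs to the ideal of $M$ and hence vanishes on $N$; therefore $x_j\,g\,dx_{I'}\in\Lambda^{p-1}_N(\mathbb R^m)$ and $x_j\,dg\wedge dx_{I'}\in\Lambda^p_N(\mathbb R^m)$, exhibiting the term as an element of $\mathcal A^p_0(N,\mathbb R^m)$. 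Summing over all such terms and using that $\mathcal A^p_0(N,\mathbb R^m)$ is a vector space gives $\mu_2\in\mathcal A^p_0(N,\mathbb R^m)$, whence $\mu\in\mathcal A^p_0(N,\mathbb R^m)$ and the proof concludes.
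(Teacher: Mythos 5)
Your proof is correct: the reduction to a single form $\eta$, the purely formal implication via $i^*$, the retraction $\pi$ with $\pi\circ i=\mathrm{id}_M$, and the key identity $g\,dx_j\wedge dx_{I'}=d\bigl(x_jg\,dx_{I'}\bigr)-x_j\,dg\wedge dx_{I'}$ (valid because $x_j$ vanishes on $M\supseteq N$, so both the primitive and the error term lie in $\Lambda^{\bullet}_N(\mathbb R^m)$) are all sound. The paper itself gives no proof of this proposition, deferring to \cite{DJZ2}, and your argument is essentially the standard one found there, so there is nothing to flag.
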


The following, less obvious statement, means that the {\it orbits}
 of the algebraic restrictions $[\omega ]_N$ and $[\omega \vert
_{TM}]_{_N}$ also can be identified.

\begin{prop}
\label{main-reduction} Let $N_1,N_2$ be germs of subsets of
$\mathbb R^m$ contained in equal-dimensional non-singular
submanifolds $M_1, M_2$ respectively. Let $\omega _1, \omega _2$
be two germs of $p$-forms. The algebraic restrictions $[\omega
_1]_{N_1}$ and $[\omega _2]_{N_2}$ are diffeomorphic if and only
if the algebraic restrictions $\bigl[\omega _1\vert
_{TM_1}\bigr]_{N_1}$ and $\bigl[\omega _2\vert
_{TM_2}\bigr]_{N_2}$ are diffeomorphic.
\end{prop}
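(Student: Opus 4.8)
The plan is to prove both implications by reducing, through Proposition \ref{reduction}, to the problem of carrying one submanifold onto another while preserving algebraic restrictions to the subset.

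For the implication $\Leftarrow$, suppose a germ of diffeomorphism $\phi\colon M_2\to M_1$ with $\phi(N_2)=N_1$ realizes the diffeomorphism of $\bigl[\omega_1\vert_{TM_1}\bigr]_{N_1}$ and $\bigl[\omega_2\vert_{TM_2}\bigr]_{N_2}$. Since $M_1,M_2$ are non-singular and equidimensional, hence of equal codimension, I would extend $\phi$ to a germ of diffeomorphism $\Phi$ of $\mathbb R^m$ with $\Phi\vert_{M_2}=\phi$ (adapted coordinates or a tubular neighbourhood). Then $\Phi(N_2)=N_1$, and because $\Phi$ maps $M_2$ into $M_1$ one has $(\Phi^*\omega_1)\vert_{TM_2}=\phi^*(\omega_1\vert_{TM_1})$; feeding this into Proposition \ref{reduction} turns the assumed equality of restricted algebraic restrictions into $[\Phi^*\omega_1]_{N_2}=[\omega_2]_{N_2}$, which is the desired ambient diffeomorphism.

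For $\Rightarrow$, let $\Phi$ be an ambient diffeomorphism with $\Phi(N_2)=N_1$ and $[\Phi^*\omega_1]_{N_2}=[\omega_2]_{N_2}$, and put $M':=\Phi(M_2)$, a non-singular submanifold of the same dimension as $M_1$ that contains $N_1$. Restricting $\Phi$ and using Proposition \ref{reduction} once more shows that $\bigl[\omega_1\vert_{TM'}\bigr]_{N_1}$ and $\bigl[\omega_2\vert_{TM_2}\bigr]_{N_2}$ are diffeomorphic, so everything reduces to the claim that, if $M,M'$ are germs of equidimensional non-singular submanifolds both containing $N$, then $[\omega\vert_{TM}]_N$ and $[\omega\vert_{TM'}]_N$ are diffeomorphic for every germ of $p$-form $\omega$. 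I would prove this by producing a germ of diffeomorphism $\Theta$ of $\mathbb R^m$ with $\Theta(M')=M$ and $\Theta\vert_N=\mathrm{id}$, realized as the time-one map of a flow $\Theta_t$ whose time-dependent generator $V_t$ vanishes on $N$. Granting $\Theta$, the restriction $\Theta\vert_{M'}\colon M'\to M$ fixes $N$ and satisfies $(\Theta\vert_{M'})^*(\omega\vert_{TM})=(\Theta^*\omega)\vert_{TM'}$, while the flow forces $[\Theta^*\omega]_N=[\omega]_N$ because
$$\frac{d}{dt}\Theta_t^*\omega=\Theta_t^*\bigl(d\,\iota_{V_t}\omega+\iota_{V_t}\,d\omega\bigr),$$
and $V_t\vert_N=0$ gives $\iota_{V_t}\omega\in\Lambda^{p-1}_N(\mathbb R^m)$ and $\iota_{V_t}\,d\omega\in\Lambda^{p}_N(\mathbb R^m)$, so each derivative lies in $\mathcal A^p_0(N,\mathbb R^m)$, a subspace preserved by the $N$-fixing maps $\Theta_t^*$; integrating over $[0,1]$ yields $\Theta^*\omega-\omega\in\mathcal A^p_0(N,\mathbb R^m)$. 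Combined with Proposition \ref{reduction} this gives $\bigl[\omega\vert_{TM'}\bigr]_N=\bigl[(\Theta\vert_{M'})^*(\omega\vert_{TM})\bigr]_N$, i.e.\ the two restricted algebraic restrictions are diffeomorphic via $\Theta\vert_{M'}$.

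The main obstacle is the construction of the isotopy $\Theta_t$ carrying $M'$ onto $M$ with generator vanishing on $N$; this is exactly where equidimensionality is used. I would build it in two stages, each by a flow vanishing on $N$: first align the tangent planes $T_xM'$ with $T_xM$ along $N$, and then straighten $M'$ onto $M$ once their tangent spaces agree along $N$. Any vector field vanishing on $N$ has differential killing the tangent cone of $N$ at each of its points, so the plane-alignment family is automatically compatible with $N\subset M\cap M'$; the delicate point, and where I expect the real work, is securing smooth dependence of the generators across the (possibly singular) set $N$.
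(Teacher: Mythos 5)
Your overall skeleton is the standard one and most of it is sound: the $\Leftarrow$ direction via extending $\phi$ to an ambient germ by adapted coordinates and invoking Proposition \ref{reduction} is correct; the reduction of $\Rightarrow$ to the claim that $[\omega\vert_{TM}]_N$ and $[\omega\vert_{TM'}]_N$ are diffeomorphic for two equidimensional non-singular germs $M,M'\supset N$ is correct; and your Moser-type computation showing $[\Theta^*\omega]_N=[\omega]_N$ whenever $\Theta$ is the time-one map of a flow whose generator vanishes on $N$ is valid (both $\iota_{V_t}\omega$ and $\iota_{V_t}d\omega$ vanish pointwise on $N$, and the $N$-fixing maps $\Theta_t^*$ preserve $\Lambda^p_N$ and commute with $d$, so the integrand stays in $\mathcal A^p_0(N,\mathbb R^m)$). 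The genuine gap is exactly the point you flag: you never construct the isotopy $\Theta_t$ carrying $M'$ onto $M$ with generator vanishing on $N$, and your proposed route --- first aligning $T_xM'$ with $T_xM$ at \emph{every} point $x\in N$, then straightening --- is the wrong way in: along a singular $N$ there is no evident smooth choice of the pointwise alignment maps (the assignment $x\mapsto$ rotation taking $T_xM'$ to $T_xM$ need not extend smoothly across the singular point, and nothing forces the two tangent planes to agree along $N$ in the first place), so the ``real work'' you defer would not go through as sketched.

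The missing idea is that for \emph{germs at $0$} no alignment along $N$ is needed at all: only linear data at the base point enters. Choose a linear subspace $V\subset\mathbb R^m$ of dimension $m-k$ with $V\cap T_0M=V\cap T_0M'=0$ (a generic complement works, since transversality to each $k$-plane is open and dense), and split $\mathbb R^m=B\oplus V$ with $B$ a $k$-dimensional complement of $V$. Then both $M$ and $M'$ are germs of graphs $\{(b,f(b))\}$, $\{(b,f'(b))\}$ of maps $B\to V$, and $N\subset M\cap M'$ forces $f=f'$ on the projection of $N$. The shear $\Theta_t(b,v)=\bigl(b,\,v+t\,(f'(b)-f(b))\bigr)$ is then a flow of germs of diffeomorphisms with $\Theta_1(M)=M'$, fixing $N$ pointwise, whose (globally smooth) generator $(0,f'(b)-f(b))$ vanishes on $N$ --- no regularity issue across the singularities of $N$ arises because the generator is smooth on a whole neighbourhood by construction. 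Feeding this $\Theta$ into your own Moser computation and Proposition \ref{reduction} closes the argument. This is in substance how the proof runs in \cite{DJZ2}, to which the present paper defers for this proposition: the crux there is a lemma producing a diffeomorphism fixing $N$ pointwise and mapping one submanifold onto the other, obtained at the level of germs rather than by tangent-plane alignment along $N$.
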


To calculate the space of algebraic restrictions of $2$-forms we
will use the following obvious properties.

\begin{prop}\label{d-wedge}
If $\omega\in \mathcal A_0^k(N,\mathbb R^{2n})$ then $d\omega
\in \mathcal A_0^{k+1}(N,\mathbb R^{2n})$ and $\omega\wedge
\alpha \in \mathcal A_0^{k+p}(N,\mathbb R^{2n})$ for any
$p$-form $\alpha$ on $\mathbb R^{2n}$.
\end{prop}

The next step of our calculation is the description of the
subspace of algebraic restriction of closed $2$-forms. The
following proposition is very useful for this step.
\begin{prop}
\label{th-all-closed} Let $a_1,\dots , a_k$ be a basis of the
space of algebraic restrictions of $2$-forms  to $N$ satisfying
the following conditions
\begin{enumerate}
\item $da_1 = \cdots = da_j = 0$, \item the algebraic restrictions
$da_{j+1}, \dots , da_k$ are linearly independent.
\end{enumerate}
 Then $a_1, \dots , a_j$ is a basis of the space
of algebraic restriction of closed $2$-forms to $N$.
\end{prop}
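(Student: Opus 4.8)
The plan is to recognise the exterior derivative as a well-defined operator on algebraic restrictions, to identify the space of algebraic restrictions of closed $2$-forms with its kernel, and then to finish by a dimension count.

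First I would note that $d$ descends to algebraic restrictions. By Proposition~\ref{d-wedge}, $\omega-\omega'\in\mathcal A_0^2(N,\mathbb R^{2n})$ implies $d\omega-d\omega'\in\mathcal A_0^3(N,\mathbb R^{2n})$, so $\bar d\,[\omega]_N:=[d\omega]_N$ defines a linear map $\bar d$ from the space $R^2$ of algebraic restrictions of $2$-forms to $N$ into the space $R^3$ of algebraic restrictions of $3$-forms; here $\bar d\,a_i$ is precisely the $da_i$ of the statement. Write $C\subseteq R^2$ for the space of algebraic restrictions of closed $2$-forms, i.e.\ the image of the closed $2$-forms under $\omega\mapsto[\omega]_N$. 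The crux is the identity $C=\ker\bar d$.

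The inclusion $C\subseteq\ker\bar d$ is immediate, since a closed $\omega$ has $\bar d\,[\omega]_N=[d\omega]_N=[0]_N=0$. For $\ker\bar d\subseteq C$, suppose $[d\omega]_N=0$, so $d\omega=\alpha+d\beta$ with $\alpha\in\Lambda_N^3$ and $\beta\in\Lambda_N^2$. Replacing $\omega$ by $\omega-\beta$, which leaves $[\omega]_N$ unchanged, I may assume $d\omega=\alpha$ is a closed $3$-form vanishing on $N$. It then suffices to find $\gamma\in\Lambda_N^2$ with $d\gamma=\alpha$: the form $\omega-\gamma$ is then closed and satisfies $[\omega-\gamma]_N=[\omega]_N$, so $[\omega]_N\in C$. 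To produce $\gamma$ I would invoke a relative Poincar\'e lemma, applying the homotopy operator $K$ attached to the quasi-homogeneous Euler flow and using $\alpha=d(K\alpha)+K(d\alpha)=d(K\alpha)$; because $N$ is invariant under that flow and $\alpha$ vanishes on $N$, the primitive $K\alpha$ again vanishes on $N$, so $\gamma:=K\alpha$ works. This relative Poincar\'e step is the main obstacle, and it is exactly the point at which quasi-homogeneity of $N$ is used.

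Granting $C=\ker\bar d$, the conclusion follows by linear algebra. As $a_1,\dots,a_k$ is a basis, $R^2$ has dimension $k$; by hypothesis~(2) the images $\bar d\,a_{j+1},\dots,\bar d\,a_k$ are linearly independent, while hypothesis~(1) makes $\bar d\,a_1,\dots,\bar d\,a_j$ vanish, so these $k-j$ images span $\operatorname{im}\bar d$, whence $\operatorname{rank}\bar d=k-j$ and $\dim\ker\bar d=j$ by rank--nullity. Hypothesis~(1) places $a_1,\dots,a_j$ in $\ker\bar d=C$, and these are linearly independent as part of a basis; being $j$ independent vectors in the $j$-dimensional space $C$, they form a basis of it. (Spanning can also be seen directly: any $c=\sum_i c_i a_i\in C$ has $0=\bar d\,c=\sum_{i>j}c_i\,\bar d\,a_i$, forcing $c_i=0$ for $i>j$, so $c\in\Span(a_1,\dots,a_j)$.)
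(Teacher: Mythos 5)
Your argument is correct in substance, but note first that the paper itself offers no proof of this proposition: Section~\ref{method} states that the proofs of all results of that section are in \cite{DJZ2}, so the comparison is really with the argument of \cite{DJZ2} (together with \cite{DJZ1}), and your proposal essentially reconstructs it. The routine part is exactly as you present it: $d$ descends to algebraic restrictions by Proposition~\ref{d-wedge}; hypotheses (1) and (2) show that $da_{j+1},\dots,da_k$ span the image of the induced map $\bar d$, so rank--nullity gives $\dim\ker\bar d=j$; and the whole statement then reduces to the identity $\ker\bar d=\bigl[Z^2(\mathbb R^{2n})\bigr]_N$ (indeed, applying the proposition to a basis adapted to $\ker\bar d$ shows the statement is \emph{equivalent} to that identity). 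You correctly isolate the nontrivial inclusion $\ker\bar d\subseteq\bigl[Z^2(\mathbb R^{2n})\bigr]_N$ and reduce it to the claim that an exact $3$-form vanishing pointwise on $N$ has a primitive vanishing on $N$; this is precisely the relative Poincar\'e lemma of \cite{DJZ1}, and your Euler-flow homotopy operator is the proof given there: for positive weights the operator $K$ is well defined on germs of forms of positive degree, and the invariance of $N$ under the quasi-homogeneous flow forces $K\alpha\in\Lambda^3_N$-type vanishing to pass to the primitive, exactly as you say.

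The one point to make explicit: as printed, the proposition carries no quasi-homogeneity hypothesis on $N$, while your homotopy-operator step uses invariance of $N$ under the quasi-homogeneous flow in an essential way. You are aware of this (you flag it as the place where quasi-homogeneity enters), but you should state it as a standing hypothesis of your proof rather than as a remark, since for general $N$ the inclusion $\ker\bar d\subseteq\bigl[Z^2(\mathbb R^{2n})\bigr]_N$ is exactly the delicate point --- the relative Poincar\'e lemma can fail for non-quasi-homogeneous sets, which is also the source of the ``one more cohomological invariant'' mentioned in the introduction (\cite{DJZ2}, \cite{DJZ1}). This restriction is harmless for the present paper, where the method (Theorem~\ref{thm A}) and the sole application ($N=S_{\mu}$, which is quasi-homogeneous) live entirely in the quasi-homogeneous setting. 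Your closing direct spanning argument is fine; it is the kernel computation restated and makes the dimension count self-contained.
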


Then we need to determine which algebraic restrictions of closed
$2$-forms are realizable by symplectic forms. This is possible due
to the following fact.

\begin{prop}\label{rank}
 Let $N\subset \mathbb R^{2n}$. Let $r$ be
the minimal dimension of non-singular submanifolds of $\mathbb
R^{2n}$ containing $N$. Let $M$ be one of such $r$-dimensional
submanifolds. The algebraic restriction $[\theta ]_N$ of the germ
of closed $2$-form $\theta $ is realizable by the germ of a
symplectic form on $\mathbb R^{2n}$ if and only if $rank (\theta
\vert _{T_0M})\ge 2r - 2n$.
\end{prop}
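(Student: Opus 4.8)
The plan is to reduce the assertion to a linear-algebra statement at the origin and to the following lemma: \emph{an antisymmetric bilinear form $\sigma$ on an $r$-dimensional subspace $W\subseteq T_0\mathbb R^{2n}$ extends to a nondegenerate antisymmetric form on $T_0\mathbb R^{2n}$ if and only if $\mathrm{rank}(\sigma)\ge 2r-2n$.} I would prove the lemma first. For necessity, set $K=\ker\sigma\subseteq W$, so $\dim K=r-\mathrm{rank}(\sigma)$; if $\Omega$ is a nondegenerate extension and $C$ is any complement of $W$, the map $K\to C^{*}$, $v\mapsto\Omega(v,\cdot)\vert_{C}$, is injective, since a vector in its kernel would be $\Omega$-orthogonal to $W\oplus C=T_0\mathbb R^{2n}$. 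Hence $\dim K\le\dim C=2n-r$, i.e. $\mathrm{rank}(\sigma)\ge 2r-2n$. For sufficiency I would write $W=U\oplus K$ with $\sigma\vert_{U}$ nondegenerate, pair $K$ hyperbolically with a subspace of equal dimension inside a complement $C$, and put any symplectic form on the remaining block, which has even dimension $2n-2r+\mathrm{rank}(\sigma)\ge 0$; declaring the three blocks mutually orthogonal yields a nondegenerate extension of $\sigma$.

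Granting the lemma, necessity in the Proposition follows at once with $W=T_0M$. If $[\theta]_N$ is realized by a symplectic germ $\omega$, then $\omega-\theta$ has zero algebraic restriction to $N$, so Proposition \ref{zero-at-zero} gives $(\omega-\theta)\vert_{W}=0$, that is $\theta\vert_{T_0M}=\omega\vert_{T_0M}$. As $\omega$ is nondegenerate on $T_0\mathbb R^{2n}$, the form $\theta\vert_{T_0M}$ admits a nondegenerate extension, so the lemma forces $\mathrm{rank}(\theta\vert_{T_0M})\ge 2r-2n$.

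For sufficiency, assume $\mathrm{rank}(\theta\vert_{T_0M})\ge 2r-2n$ and choose coordinates $(x_1,\dots,x_r,y_1,\dots,y_{2n-r})$ with $M=\{y=0\}$, so that $N\subset\{y=0\}$ and $T_0M=\Span(\partial_{x_1},\dots,\partial_{x_r})$. The lemma provides a nondegenerate extension $\Omega_0$ of $\theta\vert_{T_0M}$ to $T_0\mathbb R^{2n}$; since $\Omega_0$ and $\theta(0)$ agree on $W$, their difference has no $dx_i\wedge dx_j$ component. Let $\eta$ be the constant-coefficient $2$-form with value $\Omega_0-\theta(0)$ at $0$. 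Each of its monomials is $dy_\alpha\wedge dx_i=d(y_\alpha\,dx_i)$ or $dy_\alpha\wedge dy_\beta=d(y_\alpha\,dy_\beta)$ with a primitive vanishing on $N$, so $\eta$ is closed and $[\eta]_N=0$. Then $\omega:=\theta+\eta$ is closed, satisfies $[\omega]_N=[\theta]_N$, and equals $\Omega_0$ at the origin, hence is nondegenerate near $0$ and is the desired symplectic germ. The main work is the linear-algebra lemma together with the verification that the transverse monomials $dy_\alpha\wedge dx_i$ and $dy_\alpha\wedge dy_\beta$ have zero algebraic restriction; both are routine, and the only genuine subtlety is matching the algebraic restriction rather than merely the value of $\omega$ at the origin.
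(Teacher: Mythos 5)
Your proof is correct: the linear-algebra extension lemma (with the injectivity argument $K\to C^{*}$ for necessity and the block decomposition $U\oplus(K\oplus K')\oplus R$ for sufficiency), the appeal to Proposition \ref{zero-at-zero} to get $\theta\vert_{T_0M}=\omega\vert_{T_0M}$, and the correction of $\theta$ by the exact constant form $\eta$ built from $d(y_\alpha\,dx_i)$ and $d(y_\alpha\,dy_\beta)$, whose primitives vanish on $N\subset\{y=0\}$, are all sound, including the one genuinely delicate point you flag, namely matching the algebraic restriction rather than only the value at the origin. The paper itself gives no proof and defers to \cite{DJZ2}, and your argument is essentially the one found there (flatten $M$ into a coordinate subspace, extend $\theta\vert_{T_0M}$ pointwise to a nondegenerate form, correct by forms with zero algebraic restriction), so nothing further is needed.
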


Let us fix the following notations:

\smallskip

\noindent $\bullet$  $\algrestall $: \ \ the vector space
consisting of algebraic restrictions of germs of all $2$-forms on
$\mathbb R^{2n}$ to the germ of a subset $N\subset \mathbb
R^{2n}$;

\smallskip

\noindent $\bullet$  $\algrestclosed$: \ \ the subspace of
$\algrestall $ consisting of algebraic restrictions of germs of
all closed $2$-forms on $\mathbb R^{2n}$ to $N$;

\smallskip

\noindent $\bullet$ $\algrest $: \ \ the open set in
$\algrestclosed$ consisting of algebraic restrictions of germs of
all symplectic $2$-forms on $\mathbb R^{2n}$ to $N$.

\medskip

\section{Discrete symplectic invariants.}\label{discrete}

We can use some discrete symplectic invariants to characterize simplectic singularity classes. They show how far is a curve $N$ from the closest non-singular Lagrangian submanifold.

 The first one is a symplectic
multiplicity (\cite{DJZ2}) introduced  in \cite{IJ1} as a
symplectic defect of a curve.

Let $N$ be a germ of a subset of $(\mathbb R^{2n},\omega)$.

\begin{defn}
\label{def-mu}
 The {\bf symplectic multiplicity} $\mu_{sympl}(N)$ of  $N$ is the codimension of
 a symplectic orbit of $N$ in an orbit of $N$ with respect to the action of the group of local diffeomorphisms.
\end{defn}

The second one is the index of isotropy \cite{DJZ2}.

\begin{defn}
The {\bf index of isotropy} $\iota(N)$ of $N$ is the maximal
order of vanishing of the $2$-forms $\omega \vert _{TM}$ over all
smooth submanifolds $M$ containing $N$.
\end{defn}

They can be described in terms of algebraic restrictions.

\begin{prop}[\cite{DJZ2}]\label{sm}
The symplectic multiplicity  of the germ of a quasi-homogeneous subset $N$ in a
symplectic space is equal to the codimension of the orbit of the
algebraic restriction $[\omega ]_N$ with respect to the group of
local diffeomorphisms preserving $N$  in the space of algebraic
restrictions of closed  $2$-forms to $N$.
\end{prop}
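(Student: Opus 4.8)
The plan is to express both codimensions as the dimension of one and the same quotient of the space of germs of vector fields, using Theorem \ref{thm A} to tie the symplectic classification of subsets to the diffeomorphic classification of algebraic restrictions of $\omega$. Write $G$ for the group of germs of local diffeomorphisms of $(\mathbb R^{2n},0)$, $\mathrm{Symp}$ for the subgroup preserving $\omega$, and $\mathrm{Symm}(N)$ for the subgroup preserving $N$. The diffeomorphism orbit of $N$ is $G/\mathrm{Symm}(N)$ and its symplectic suborbit is $\mathrm{Symp}/(\mathrm{Symp}\cap\mathrm{Symm}(N))$, so by Definition \ref{def-mu} we have $\mu_{sympl}(N)=\dim\,\mathfrak g/(\mathfrak s+\mathfrak n)$, where $\mathfrak g$ is the space of germs of vector fields, $\mathfrak s=\{X:\ L_X\omega=0\}$ is the subspace of (locally Hamiltonian) symplectic fields, and $\mathfrak n=\{X:\ X\text{ tangent to }N\}$ is the infinitesimal stabilizer of $N$.

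First I would introduce the comparison map $\Theta\colon G\to\algrest$, $\Theta(g)=[g^*\omega]_N$. By the Darboux theorem every germ of a symplectic form equals $g^*\omega$ for some $g$, so $\Theta$ is onto $\algrest$; moreover $\Theta$ is constant on left $\mathrm{Symp}$-cosets and intertwines right translation by $\mathrm{Symm}(N)$ with the $\mathrm{Symm}(N)$-action on algebraic restrictions. Differentiating at the identity gives $d\Theta(X)=[L_X\omega]_N=[d\iota_X\omega]_N$, using $d\omega=0$. Since $\omega$ is nondegenerate, $X\mapsto\iota_X\omega$ is onto the germs of $1$-forms, and any two closed $2$-forms differ by an exact one (Poincar\'e lemma); hence $\{[d\iota_X\omega]_N:X\in\mathfrak g\}$ exhausts $\algrestclosed$, so $d\Theta$ is onto $\algrestclosed$. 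As $\algrest$ is open in $\algrestclosed$ the two share a tangent space at $[\omega]_N$, which both justifies computing the codimension in $\algrestclosed$ as the statement requires and identifies $\algrestclosed\cong\mathfrak g/\ker d\Theta$.

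With this set up, the $\mathrm{Symm}(N)$-orbit of $[\omega]_N$ has tangent space $d\Theta(\mathfrak n)=\{[L_X\omega]_N:X\in\mathfrak n\}$, so its codimension in $\algrestclosed$ equals $\dim\,\mathfrak g/(\mathfrak n+\ker d\Theta)$, where $\ker d\Theta=\{X:[L_X\omega]_N=0\}$. Comparing this with $\mu_{sympl}(N)=\dim\,\mathfrak g/(\mathfrak s+\mathfrak n)$, and observing that $\mathfrak s\subseteq\ker d\Theta$, the whole proposition reduces to the single containment $\ker d\Theta\subseteq\mathfrak s+\mathfrak n$: every germ $X$ with $[L_X\omega]_N=0$ must split as a symplectic field plus a field tangent to $N$.

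This last containment is the main obstacle, and it is precisely the infinitesimal content of Theorem \ref{thm A}. Given $X$ with $[L_X\omega]_N=0$, the closed $2$-form $\sigma=L_X\omega=d\iota_X\omega$ has zero algebraic restriction to $N$, and the relative Moser (homotopy) construction that proves Theorem \ref{thm A} for the \qh\ set $N$ solves the equation $L_Y\omega=\sigma$ by a field $Y$ tangent to $N$; then $X-Y\in\mathfrak s$ and $X=(X-Y)+Y\in\mathfrak s+\mathfrak n$. I would therefore invoke the quasi-homogeneity of $N$ exactly at this point to run that homotopy argument, giving $\ker d\Theta+\mathfrak n=\mathfrak s+\mathfrak n$; the equivariance and surjectivity of $\Theta$ verified above then guarantee that no dimension is lost in passing between the orbit of $N$ and the orbit of $[\omega]_N$, and the two codimensions coincide.
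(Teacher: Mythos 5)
There is nothing to compare against inside the paper: Proposition \ref{sm} is stated with a citation to \cite{DJZ2} and no proof is reproduced here, so your argument can only be judged on its own merits and against the route in \cite{DJZ2} --- and on both counts it holds up. Your reduction of both codimensions to $\dim \mathfrak g/(\mathfrak s+\mathfrak n)$ via the comparison map $\Theta(g)=[g^*\omega]_N$ is sound (surjectivity onto $\left [\Symp (\mathbb R^{2n})\right ]_{N}$ by the Darboux theorem, surjectivity of $d\Theta$ onto $\bigl [Z^2(\mathbb R^{2n})\bigr ]_N$ by the Poincar\'e lemma plus nondegeneracy of $\omega$), and you correctly isolate the one nontrivial point, the containment $\ker d\Theta\subseteq\mathfrak s+\mathfrak n$, and correctly place quasi-homogeneity there; this is the infinitesimal content of Theorem \ref{thm A}, whereas \cite{DJZ2} argues at the group level, using Theorem \ref{thm A} directly to set up a bijection between symplectic orbits inside the diffeomorphism orbit of $N$ and $\Symm$-type orbits of algebraic restrictions --- your equivariance and surjectivity remarks about $\Theta$ are precisely the linearized form of that bijection. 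Two points deserve tightening. First, rather than gesturing at ``the relative Moser construction,'' spell the key step out: if $\sigma=L_X\omega$ is closed with $[\sigma]_N=0$, write $\sigma=\tilde\alpha+d\beta$ with $\tilde\alpha,\beta$ vanishing pointwise on $N$; closedness gives $d\tilde\alpha=0$, the relative Poincar\'e lemma for quasi-homogeneous sets (\cite{DJZ1}, via the Euler vector field tangent to $N$) gives $\tilde\alpha=d\gamma$ with $\gamma$ vanishing on $N$, and then $Y$ defined by $\iota_Y\omega=\gamma+\beta$ vanishes at points of $N$, hence lies in $\mathfrak n$, and satisfies $L_Y\omega=\sigma$. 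Second, your identification of orbit codimensions with dimensions of Lie-algebra quotients in this infinite-dimensional setting is the accepted convention in this literature (it is exactly how the paper computes tangent spaces to orbits after Proposition \ref{s-infinitesimal}), but it is legitimized by the finite-dimensionality of the space of algebraic restrictions of closed $2$-forms in the cases at hand (Theorem \ref{main-alg}) together with the global orbit-to-orbit correspondence from Theorem \ref{thm A}; acknowledging this explicitly, as you partially do in your closing sentence, closes the argument.
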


\begin{prop}[\cite{DJZ2}]\label{ii}
The index of isotropy  of the germ of a quasi-homogeneous subset $N$ in a
symplectic space $(\mathbb R^{2n}, \omega )$ is equal to the
maximal order of vanishing of closed $2$-forms representing the
algebraic restriction $[\omega ]_N$.
\end{prop}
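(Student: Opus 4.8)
Write $\iota=\iota(N)$ for the index of isotropy and let $s$ denote the maximal order of vanishing at $0$ of a closed $2$-form $\theta$ with $[\theta]_N=[\omega]_N$; the plan is to establish $\iota=s$ by proving two inequalities, after first disposing of the degenerate case. If $[\omega]_N=0$ then by Theorem \ref{thm B} the set $N$ lies in a non-singular Lagrangian submanifold $L$, so $\omega|_{TL}=0$ vanishes to infinite order and $\iota=\infty$, while the closed representative $\theta=0$ gives $s=\infty$; thus both sides are infinite simultaneously and we may assume $[\omega]_N\neq0$, so that $\iota,s<\infty$.

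First I would prove $s\ge\iota$ by a pull-back construction. Let $M\supseteq N$ be a non-singular submanifold realising the order $p:=\operatorname{ord}_0(\omega|_{TM})$, and choose adapted coordinates $(x_1,\dots,x_r,y_1,\dots,y_{2n-r})$ in which $M=\{y=0\}$. Let $\pi(x,y)=(x,0)$ be the associated retraction onto $M$ and $i_M\colon M\hookrightarrow\mathbb R^{2n}$ the inclusion, so that $\pi\circ i_M=\mathrm{id}_M$. Set $\theta:=\pi^{*}(\omega|_{TM})$. Then $\theta$ is closed, since $\omega|_{TM}=i_M^{*}\omega$ is closed and $\pi^{*}$ preserves closedness; moreover $\theta|_{TM}=i_M^{*}\pi^{*}i_M^{*}\omega=(\pi\circ i_M)^{*}(i_M^{*}\omega)=\omega|_{TM}$, so Proposition \ref{reduction} gives $[\theta]_N=[\omega]_N$. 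Finally the coefficients of $\theta$ are the $y$-independent functions $a_{ij}(x,0)$ appearing in $\omega|_{TM}$, whence $\operatorname{ord}_0\theta=\operatorname{ord}_0(\omega|_{TM})=p$. Thus $\theta$ is a closed representative of $[\omega]_N$ of order $p$, giving $s\ge p$ and, on taking the supremum over $M$, $s\ge\iota$.

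The reverse inequality $\iota\ge s$ is the substantive part. Starting from a closed $\theta$ with $[\theta]_N=[\omega]_N$ and $\operatorname{ord}_0\theta=s$, I would write $\omega-\theta=\alpha+d\beta$ with $\alpha\in\Lambda^2_N(\mathbb R^{2n})$ and $\beta\in\Lambda^1_N(\mathbb R^{2n})$; since $\omega$ and $\theta$ are closed, $\alpha$ is closed as well. For any non-singular $M\supseteq N$ one then has $\omega|_{TM}=\theta|_{TM}+\alpha|_{TM}+d(\beta|_{TM})$, and because $\theta$ vanishes to order $s$ the first summand already has order $\ge s$ on every $M$. Hence it suffices to produce a single non-singular submanifold $M\supseteq N$ for which the pull-backs of the forms $\alpha$ and $\beta$, both vanishing on $N$, satisfy $\operatorname{ord}_0(\alpha|_{TM})\ge s$ and $\operatorname{ord}_0(\beta|_{TM})\ge s+1$, the factor $s+1$ compensating the drop of one order under $d$.

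The construction of such an $M$ is where I expect the main difficulty, and where quasi-homogeneity of $N$ is essential. The idea is to use Theorem \ref{thm A} to replace $\omega$ by a convenient symplectic representative of $[\omega]_N$ — for instance $\theta+d\beta'$ with $\beta'\in\Lambda^1_N$ chosen, as permitted by Proposition \ref{rank}, so that $d\beta'(0)$ is non-degenerate — thereby reducing the task to flattening a single $1$-form vanishing on $N$ along a submanifold through $N$. One then builds $M$ in quasi-homogeneous adapted coordinates by extending $N$ in directions that keep this $1$-form tangent to its zero locus to order $s+1$, so that $M$ is isotropic to the required order; this is exactly the geometric content that the index of isotropy measures, and the quasi-homogeneous Euler field, together with the relative Poincar\'e-type arguments underlying Theorem \ref{thm A}, is what makes the order bookkeeping work. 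Carrying out this construction and verifying the order estimates is the crux; the two inequalities then combine to give $\iota=s$.
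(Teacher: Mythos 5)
The paper itself gives no proof of this proposition: it is quoted from \cite{DJZ2}, so your attempt can only be compared with the standard argument from that source. Your inequality $s\ge\iota$ is correct and complete as written: pulling back $\omega\vert_{TM}$ by the retraction $\pi$ onto $M$ yields a closed form with the same restriction to $TM$, hence the same algebraic restriction by Proposition \ref{reduction}, and with the same vanishing order at $0$.

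The genuine gap is in the direction $\iota\ge s$. Your reduction --- find one $M\supseteq N$ with $\mathrm{ord}_0(\alpha\vert_{TM})\ge s$ and $\mathrm{ord}_0(\beta\vert_{TM})\ge s+1$ --- is both stronger than needed and in general unachievable: $\alpha$ and $\beta$ merely vanish on $N$, and nothing forces a submanifold through $N$ to kill their Taylor expansions at the single point $0$ to prescribed orders; your final paragraph concedes this is ``the crux'' but only gestures at a construction (``keep this $1$-form tangent to its zero locus to order $s+1$''), which is not carried out and aims at the wrong target. The working argument avoids all pointwise order bookkeeping for $\alpha$ and $\beta$: take $M=\{y=0\}$ a non-singular submanifold of minimal dimension containing $N$, and set $\omega_1=\theta+\sum_j d(y_j\xi_j)$ with constant $1$-forms $\xi_j$ chosen so that $\omega_1(0)$ is nondegenerate --- possible precisely by the rank condition of Proposition \ref{rank}, which holds because $[\theta]_N=[\omega]_N$ is realizable by the symplectic form $\omega$. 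Each $y_j\xi_j$ vanishes on $N$, so $[\omega_1]_N=[\theta]_N=[\omega]_N$, and $i_M^*\,d(y_j\xi_j)=d\bigl(i_M^*(y_j\xi_j)\bigr)=0$, so $\omega_1\vert_{TM}=\theta\vert_{TM}$ already vanishes to order $\ge s$; no flattening to order $s+1$ is ever required. Then comes the step you omit entirely, and the only place quasi-homogeneity is used: by Theorem \ref{thm A} there is a local diffeomorphism $\Phi$ fixing $N$ pointwise with $\Phi^*\omega_1=\omega$; setting $M':=\Phi^{-1}(M)$, one gets $N\subseteq M'$ and $\mathrm{ord}_0(\omega\vert_{TM'})=\mathrm{ord}_0(\omega_1\vert_{TM})\ge s$, since $\Phi(0)=0$ and diffeomorphisms preserve vanishing orders. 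Without this transport of the submanifold via Theorem \ref{thm A}, your second inequality is unproven. (A minor point: your claim that $[\omega]_N\ne 0$ forces $\iota,s<\infty$ is unjustified in the smooth category, where nonzero flat forms exist, but it is also unnecessary, since both inequalities hold as statements about suprema.)
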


One more discrete symplectic invariant were introduced in \cite{D}
following ideas from \cite{Ar1} which is defined specifically for
a parameterized curve. This is the maximal tangency order of a
curve $f:\mathbb R\rightarrow M$ to a smooth Lagrangian
submanifold. If $H_1=...=H_n=0$ define a smooth submanifold $L$ in
the symplectic space then the tangency order of a curve $f:\mathbb
R\rightarrow M$ to $L$ is the minimum of the orders of vanishing
at $0$ of functions $H_1\circ f,\cdots, H_n\circ f$. We denote the
tangency order of $f$ to $L$ by $t(f,L)$.

\begin{defn}
The {\bf Lagrangian tangency order} $Lt(f)$\textbf{ of a curve} $f$ is the
maximum of $t(f,L)$ over all smooth Lagrangian submanifolds $L$ of
the symplectic space.
\end{defn}

The Lagrangian tangency order of a quasi-homogeneous curve in a
symplectic space can also be  expressed in terms of algebraic
restrictions.

\begin{prop}[\cite{D}]\label{lto}
Let $f$ be the germ of a quasi-homogeneous curve such that the
algebraic restriction of a symplectic form to it can be
represented by a closed $2$-form vanishing at $0$. Then the
Lagrangian tangency order of the germ of a quasi-homogeneous curve
$f$ is the maximum of the order of vanishing on $f$ over all
$1$-forms $\alpha$ such that $[\omega]_f=[d\alpha]_f$
\end{prop}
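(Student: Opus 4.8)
The plan is to prove the asserted equality by the two opposite inequalities, writing $\mathrm{ord}_f(\alpha)$ for the order of vanishing at $0$ of the pulled-back $1$-form $f^{*}\alpha$ on $(\mathbb{R},0)$, and letting the maximum run over all $1$-forms $\alpha$ with $[\omega]_f=[d\alpha]_f$. Both quantities $Lt(f)$ and $\max_\alpha \mathrm{ord}_f(\alpha)$ are invariant under symplectomorphisms and, for a fixed ambient submanifold, under diffeomorphisms preserving $f$; moreover, since $f$ is quasi-homogeneous, Theorem \ref{main-alg} lets us work throughout with quasi-homogeneous representatives in a finite-dimensional graded space of algebraic restrictions, and the hypothesis that $[\omega]_f$ is represented by a closed $2$-form vanishing at $0$ guarantees (via Proposition \ref{ii}) that $\iota(f)\ge 1$, so that $f$ is genuinely tangent to some smooth Lagrangian to positive order. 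I would first fix Darboux coordinates so that $\omega=\sum_{i} dp_i\wedge dq_i$, with the tautological primitive $\alpha_0=\sum_i p_i\,dq_i$ satisfying $d\alpha_0=\omega$ exactly.

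For the inequality $Lt(f)\le \max_\alpha \mathrm{ord}_f(\alpha)$, let $L$ be a smooth Lagrangian realizing $t(f,L)=Lt(f)$. After a symplectomorphism (which changes neither side) I may assume $L$ is the coordinate Lagrangian $\{p_1=\dots=p_n=0\}$, so that $t(f,L)=\min_i \mathrm{ord}_0\big(p_i\circ f\big)$ because this minimum is independent of the chosen defining functions with independent differentials. The form $\alpha_0$ is then an admissible competitor, since $d\alpha_0=\omega$ forces $[d\alpha_0]_f=[\omega]_f$. Writing $f^{*}\alpha_0=\big(\sum_i (p_i\circ f)\,(q_i\circ f)'\big)\,dt$ one reads off $\mathrm{ord}_f(\alpha_0)\ge \min_i \mathrm{ord}_0(p_i\circ f)=t(f,L)=Lt(f)$, which already yields $\max_\alpha \mathrm{ord}_f(\alpha)\ge Lt(f)$.

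The reverse inequality $\max_\alpha \mathrm{ord}_f(\alpha)\le Lt(f)$ is the substantial direction: given an admissible $\alpha$ with $\mathrm{ord}_f(\alpha)=k$, I must produce a smooth Lagrangian $L$ with $t(f,L)\ge k$. First I would normalize $\alpha$: since $\omega-d\alpha=\beta+d\gamma$ with $\beta,\gamma\in\Lambda^{\bullet}_{f}$, replacing $\alpha$ by $\alpha+\gamma$ leaves $f^{*}\alpha$ (hence $k$) unchanged and gives $\omega-d\alpha=\beta$ with $\beta$ vanishing on the image of $f$. Then, using that $d\alpha$ and $\omega$ have the same algebraic restriction to the quasi-homogeneous $f$ (and realizability of symplectic representatives, Proposition \ref{rank}), I would invoke the relative Darboux theorem (Theorem \ref{thm A}) to find a diffeomorphism fixing $f$ pointwise that identifies $\omega$ with $d\alpha$, reducing to the case where $\alpha$ is an honest primitive of $\omega$. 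I would then seek $L$ as a graph $\{p=\partial_q S\}$ over the coordinate Lagrangian, which is Lagrangian for any $S$ and on which $\alpha_0-dS$ restricts to zero; the task becomes choosing the generating function $S$ so that each $p_i\circ f-\big(\partial_{q_i}S\big)\circ f$ vanishes to order $\ge k$. The main obstacle is exactly here: the datum $\mathrm{ord}_f(\alpha)=k$ is a single scalar condition on $\sum_i(p_i\circ f)(q_i\circ f)'$, whereas the tangency order of $f$ to $L$ is controlled by the $n$ separate functions $H_i=p_i-\partial_{q_i}S$. The way I expect to close this gap is to exploit quasi-homogeneity fully: expand $f$, $\alpha$ and $S$ into quasi-homogeneous components and solve the matching equations weight by weight, so that the single order constraint on $f^{*}\alpha$ propagates, degree by degree, into the vanishing of all $H_i\circ f$ up to order $k$. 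Combining this construction with the previous inequality sandwiches the two quantities and proves $Lt(f)=\max_\alpha \mathrm{ord}_f(\alpha)$.
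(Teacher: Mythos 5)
Your first inequality is fine, but the substantial direction contains a genuine gap, and it is worth noting that the paper itself offers no proof to compare against --- Proposition~\ref{lto} is quoted from \cite{D}, so the burden of the hard direction falls entirely on your sketch. That sketch ends with an announced hope (``solve the matching equations weight by weight'') rather than an argument, and as stated the plan cannot succeed, because it never uses the hypothesis that $[\omega]_f$ is representable by a closed $2$-form vanishing at $0$ at the point where it is indispensable. Without that hypothesis the statement is simply false: take $f(t)=(p,q)=(t^3,t^2)$ in $(\mathbb{R}^2,\,dp\wedge dq)$ and $\alpha=p\,dq-\tfrac{2}{5}\,d(pq)$. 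Then $d\alpha=\omega$ exactly, so $\alpha$ is admissible, and $f^*\alpha=\bigl(2t^4-2t^4\bigr)\,dt\equiv 0$, i.e.\ the order of vanishing is infinite, while $Lt(f)=3$. Everything here is quasi-homogeneous (weights $w(p)=3$, $w(q)=2$; $\alpha$ is a single quasi-homogeneous component of degree $5$), so the cancellation that kills $f^*\alpha$ happens \emph{inside one weight level}, between the two index contributions $(p_i\circ f)(q_i\circ f)'$ --- exactly the cross-term cancellation your graded scheme would have to exclude but cannot: no smooth generating function $S$ satisfies $(\partial_q S)(t^2)=t^3$. The proposition survives only because its hypothesis fails for this curve (every representative of $[\omega]_f$ equals $dp\wedge dq\neq 0$ at the origin, since $1$-forms vanishing on the cusp have coefficients in the ideal $(p^2-q^3)$ and hence closed differentials vanishing at $0$); using the hypothesis merely to note $\iota(f)\ge 1$, as you do, does not feed it into the weight-by-weight construction, so your argument, if it worked, would prove the false unconditional statement.

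A secondary, repairable flaw: you invoke Theorem~\ref{thm A} to identify $\omega$ with $d\alpha$ by a diffeomorphism fixing $f$ pointwise, but that theorem requires \emph{both} forms to be symplectic, and $d\alpha$ is merely closed with the correct algebraic restriction --- it may well be degenerate (e.g.\ when $\alpha$ involves few of the variables and $n\ge 2$). The fix is to correct $d\alpha$ by a closed $2$-form $\theta$ with $[\theta]_f=0$ chosen so that $d\alpha+\theta$ is symplectic (possible by Proposition~\ref{rank}), and then to observe, via the relative Poincar\'e lemma for quasi-homogeneous sets \cite{DJZ1}, that $\theta=d\tau$ with $\tau$ vanishing on the curve, so that replacing $\alpha$ by $\alpha+\tau$ leaves $f^*\alpha$ unchanged; none of this is in your text. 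In sum: the inequality $Lt(f)\le\max_\alpha\mathrm{ord}_f(\alpha)$ is correctly established, but the reverse inequality --- the actual content of the proposition --- is not proved, and the specific mechanism you propose is refuted by the example above unless the vanishing-at-$0$ hypothesis is built into the construction of the Lagrangian.
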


In \cite{DT}  the above invariant were generalized for germs of
curves and multi-germs of curves which may be parameterized
analytically since Lagrangian tangency order is the same for every
'good' analytic parameterization of a curve.

Consider a multi-germ $(f_i)_{i\in\{1,\cdots,r\}}$ of analytically
parameterized curves $f_i$. For any smooth submanifold $L$ in the
symplectic space we have $r$-tuples $(t(f_1,L), \cdots,
t(f_r,L))$.

\begin{defn}
For any $I\subseteq \{1,\cdots, r\}$ we define \textbf{the tangency order of the multi-germ } $(f_i)_{i\in I}$ to $L$:
$$t[(f_i)_{i\in\ I},L]=\min_{i\in\ I} t(f_i,L).$$
\end{defn}

\begin{defn}
The {\bf Lagrangian tangency order} $Lt((f_i)_{i\in\ I})$ \textbf{of a multi-germ } $(f_i)_{i\in I}$ is the maximum of $t[(f_i)_{i\in\ I},L]$ over all smooth Lagrangian submanifolds $L$ of the symplectic space.
\end{defn}

 For multi-germs one can also define relative invariants according to selected branches or collections of branches \cite{DT}.



\begin{defn}
For fixed $j\in I$  the \textbf{ Lagrangian tangency order related to}  $f_j$ of a multi-germ  $(f_i)_{i\in I}$ \textbf{} denoted by $Lt[(f_i)_{i\in I}: f_j]$ is the maximum of $t[(f_i)_{i\in I\setminus\{j\}},L]$ over all smooth Lagrangian submanifolds $L$ of the symplectic space for which $t(f_j,L)=Lt(f_j)$.
\end{defn}

These invariants have geometric interpretation.
If $Lt(f_i)=\infty$ then a branch $f_i$ is included in a smooth Lagrangian submanifold.
If $Lt((f_i)_{i\in\ I})=\infty$ then exists a Lagrangian submanifold including all curves $f_i$ for $i\in I$.

We may use these invariants for distinguishing symplectic singularities.

\newpage

\section{Symplectic $S_{\mu}$-singularities}\label{S}

Denote by $(S_{\mu})$  (for $\mu>5$) the class of varieties in  a fixed symplectic
space $(\mathbb R^{2n}, \omega )$ which are diffeomorphic to
\begin{equation}
\label{defs} S_{\mu}=\{x\in \mathbb R ^{2n\geq 4}\,:x_1^2-x_2^2-x_3^{\mu -3}=x_2 x_3=x_{\geq 4}=0\}.\end{equation}

This is the classical 1-dimensional isolated complete intersection singularity.  Let $N\in (S_{\mu})$. Then $N$ is the union of two $1$-dimensional components invariant under action of local diffeomorphisms preserving $N$: $C_1$ -- diffeomorphic to $A_1$ singularity and $C_2$ -- diffeomorphic to $A_{\mu -4}$ singularity. N is quasi-homogeneous with weights $w(x_1)=w(x_2)=\mu-3,\, w(x_3)=2$ when $\mu$ is an even number, or $w(x_1)=w(x_2)=(\mu-3)/2,\ w(x_3)=1$ when $\mu$ is an odd number. In our paper we often use the notation $r=\mu-3$.

\medskip

 We will use the method of algebraic restrictions to obtain a complete classification of
symplectic singularities in $(S_{\mu})$ presented in the following theorem.

\begin{thm}\label{s-main}
Any stratified submanifold of the symplectic space $(\mathbb
R^{2n},\sum_{i=1}^n dp_i \wedge dq_i)$ which is diffeomorphic to
$S_{\mu}$ is symplectically equivalent to one and only one of the
normal forms $S_{\mu}^{i,j}$. The
parameters $c_i$ of the normal forms are moduli.

\smallskip

\noindent $S_{\mu}^0:$ \ $p_1^2 - p_2^2 - q_1^r = 0, \ \ p_2q_1 = 0, \ \
q_2 = c_1q_1 - c_2p_1, \ \ p_{\ge 3} = q_{\ge 3} = 0$;

\smallskip

\noindent $S_{\mu\;2}^{k} (1\leq k\leq\mu-5):$  $p_2^2-p_1^2-q_1^r=0, \ \ p_1q_1=0,  \
q_2=c_3p_1+\frac{c_{4+k}}{k+1}q_1^{k+1}$, \par  $p_{\ge 3}=q_{\ge 3}=0, \ c_{4+k}\ne 0;$

\smallskip

\noindent $S_{\mu\;\;2}^{\mu-4}:$  $p_2^2-p_1^2-q_1^r=0, \ \ p_1q_1=0,  \
q_2=c_3p_1+\frac{c_{\mu}}{r}q_1^{r}, \  p_{\ge 3}=q_{\ge 3}=0, \; c_3c_{\mu}=0;$

\smallskip

\noindent $S_{\mu\;\;r}^{1+k} (1\leq k\leq\mu-6):$  $p_1^2-q_1^2-q_2^r=0, \ \ q_1q_2=0,  \ p_2=p_1q_2^{k}(c_{4+k}+c_{5+k}q_2)$, \par  $p_{\ge 3}=q_{\ge 3}=0,$
\ $\; \; \; \; c_{4+k}\ne 0$;

\smallskip

\noindent $S_{\mu\;\;r}^{\mu-4}:$  $p_1^2-q_1^2-q_2^r=0, \ \ q_1q_2=0,  \ p_2=c_{\mu-1}p_1q_2^{r-2}, \  p_{\ge 3}=q_{\ge 3}=0;$

\smallskip

\noindent $S_{\mu}^{3,1}:$  $p_1^2-p_2^2-p_3^r=0, \  p_2p_3=0,  \
q_1=\frac{1}{2}p_3^{2},\ q_2=-c_4p_1p_3,  \  p_{\ge 4}=q_{\ge 3}=0$;

\smallskip

\noindent $S_{\mu}^{2+k,1} (2\leq k\leq\mu-4):$  $p_1^2-p_2^2-p_3^r=0, \  p_2p_3= 0,  \
q_1=\frac{c_{4+k}}{k+1}p_3^{k+1}$, \ $q_2=-p_1p_3,$ \par  $ \  p_{\ge 4}=q_{\ge 3}=0$, \ ($c_{4+k}\ne 0$  for $ 2\leq k\leq\mu-5);$

\smallskip

\noindent $S_{\mu}^{3+k,k}(2\leq k\leq\mu-4):$  $p_1^2-p_2^2-p_3^r=0, \  p_2p_3=0,  \
q_1=\frac{1}{k+1}p_3^{k+1},  \  p_{\ge 4}=q_{\ge 2}=0;$

\smallskip

\noindent $S_{\mu}^{\mu}:$  $p_1^2-p_2^2-p_3^r=0, \  p_2p_3=0,  \
\  p_{\ge 4}=q_{\ge 1}=0,$

\smallskip

(by $r$ we denote $\mu-3$).

\end{thm}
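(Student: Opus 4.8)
The plan is to apply the method of algebraic restrictions exactly as packaged in Section \ref{method}. Since $N \in (S_{\mu})$ is quasi-homogeneous, Theorem \ref{thm A} tells us that two such germs are symplectomorphic if and only if the algebraic restrictions of $\omega$ to them are diffeomorphic; so the entire classification reduces to listing the orbits of the action of the group of $N$-preserving diffeomorphisms on the space $\algrest$ of algebraic restrictions of symplectic forms, and then realizing each orbit by an explicit geometric model. Because $S_{\mu}$ sits inside the nonsingular $3$-dimensional submanifold $M = \{x_{\ge 4} = 0\}$ --- which is the minimal such submanifold, its tangent space at $0$ being $\operatorname{span}(\partial_{x_1}, \partial_{x_2}, \partial_{x_3})$ --- Propositions \ref{reduction} and \ref{main-reduction} let me carry out all computations for forms on $M \cong \mathbb{R}^3$ and only afterwards re-insert $N$ into $\mathbb{R}^{2n}$. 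I would fix the quasi-homogeneous grading recorded after (\ref{defs}) and work degree by degree throughout.

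First I would compute $\algrestall$. Taking the generating monomial $2$-forms $x_i^a x_j^b\, dx_k \wedge dx_l$ on $\mathbb{R}^3$ and reducing them modulo $\mathcal{A}_0^2(N, \mathbb{R}^3)$, I would use Proposition \ref{d-wedge} together with the defining relations $x_1^2 - x_2^2 - x_3^r$ and $x_2 x_3$ (and the forms obtained by multiplying and differentiating them) to kill all but finitely many monomial $2$-forms; Theorem \ref{main-alg} guarantees the process terminates in a finite-dimensional space. Next, applying Proposition \ref{th-all-closed}, I would split a quasi-homogeneous basis $a_1, \dots, a_k$ of $\algrestall$ into the closed generators and those with independent differentials, thereby reading off $\algrestclosed$. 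Finally Proposition \ref{rank} decides realizability by symplectic forms: since the minimal dimension of a nonsingular submanifold containing $N$ is $3$, the condition $\operatorname{rank}(\theta|_{T_0 M}) \ge 6 - 2n$ is automatic once $2n \ge 6$, while for $2n = 4$ it imposes a genuine rank restriction --- this is exactly what forces some orbits to live only in higher-dimensional symplectic spaces and accounts for the spread of ambient dimensions among the normal forms $S_{\mu}^{i,j}$.

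The substantive work --- and the step I expect to be the main obstacle --- is the classification of orbits of $\algrest$ under the group $\operatorname{Symm}(N)$ of local diffeomorphisms preserving $N$. I would first determine this symmetry group (or at least its induced action on the finite-dimensional space of algebraic restrictions) from the quasi-homogeneous structure and from the fact that $N$ splits into the two invariant branches $C_1 \simeq A_1$ and $C_2 \simeq A_{\mu-4}$, which the symmetries must respect. Acting with these symmetries on a general closed realizable representative $\sum c_i a_i$, I would normalize coefficients in decreasing order of quasi-homogeneous degree, scaling and translating to set as many $c_i$ to $0$ or $1$ as the group allows. The branching of the classification tree (the superscripts $i,j$ and the subscripts $2, r$) is governed by the discrete invariants of Section \ref{discrete} --- the index of isotropy and the Lagrangian tangency orders of the two branches --- so I would stratify the normalization according to the lowest degree at which a representative can be made to vanish, which is precisely where the index of isotropy jumps; the coefficients that survive all admissible normalizations are the moduli $c_i$.

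For each surviving orbit I would exhibit the stated algebraic restriction by a concrete closed $2$-form, and then produce the geometric normal form $S_{\mu}^{i,j}$ by choosing Darboux coordinates $(p_i, q_i)$ in which that closed $2$-form equals $\sum dp_i \wedge dq_i$ modulo $\mathcal{A}_0^2$; Theorem \ref{thm A} then upgrades the equality of algebraic restrictions to an honest symplectomorphism between $(N, \omega)$ and the model. Uniqueness --- that the listed forms are pairwise non-symplectomorphic and that the $c_i$ cannot be further removed --- would follow by showing that each pair of distinct orbits is separated either by the index of isotropy or symplectic multiplicity (Propositions \ref{sm}, \ref{ii}) or by the Lagrangian tangency orders of the branches computed via Proposition \ref{lto}, and by checking that the isotropy subgroup fixing a normal form acts trivially on the remaining parameters. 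The bookkeeping over the parity of $\mu$ (entering through the two weight systems) and over the many degree ranges $1 \le k \le \mu - 5$, etc., is where the calculation is heaviest, but each individual reduction is routine once the symmetry group and the graded basis are in hand.
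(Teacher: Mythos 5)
Your proposal follows essentially the same route as the paper: reduction to the minimal $3$-manifold via Propositions \ref{reduction} and \ref{main-reduction}, computation of $\bigl[\Lambda^2\bigr]_{S_\mu}$ from the defining relations, passage to closed forms via Proposition \ref{th-all-closed} and realizability via Proposition \ref{rank} (your dimension bookkeeping for $2n=4$ versus $2n\ge 6$ matches Proposition \ref{s_baza}), degree-wise normalization of $\sum c_i\theta_i$ under $\operatorname{Symm}(S_\mu)$, transfer to symplectic normal forms through Theorem \ref{thm A}, and separation of orbits by the discrete invariants. The only device you leave implicit, which the paper makes explicit, is that the effective infinitesimal symmetries are function multiples of the Euler field (the Hamiltonian part acting trivially by Proposition 6.13 of \cite{DT}), whose Lie-derivative table is then integrated by the homotopy method to solve the triangular linear systems behind your ``routine'' normalizations.
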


 In section  \ref{s-class}  we calculate
the manifolds $[{\rm Symp} (\mathbb R^{2n})]_{S_{\mu}}$ and classify its algebraic restrictions. This allows us to decompose $S_{\mu}$ into symplectic singularity classes. In section  \ref{s-normal}  we transfer the normal forms for algebraic restrictions to symplectic normal forms to obtain the
proof of Theorem \ref{s-main}. In section \ref{s-lagr} we use
Lagrangian tangency orders to distinguish more symplectic
singularity classes. In section \ref{s-geom_cond} we propose a
geometric description of these singularities which confirms this
more detailed classification. Some of the proofs are presented in
section \ref{s-proof}.

\subsection{Algebraic restrictions and their classification}\label{s-class}

One has the relations for $(S_{\mu})$-singularites
\begin{equation}
[d(x_2 x_3)]_{S_{\mu}}=[x_2 dx_3+x_3 dx_2]_{S_{\mu}}=0
\label{s1}
\end{equation}
\begin{equation}
[d(x_1^2-x_2^2-x_3^{\mu-3})]_{S_{\mu}}=[2x_1dx_1-2x_2dx_2-(\mu-3)x_3^{\mu-4}dx_3]_{S_{\mu}}=0
\label{s2}
\end{equation}
Multiplying these relations by suitable $1$-forms we obtain the relations in Table \ref{tabs1}.

\renewcommand*{\arraystretch}{1.3}
\begin{small}
\begin{table}[h]
\begin{center}
\begin{tabular}{|c|c|c|}

 \hline

        & relations & proof\\ \hline

   1. & $[x_2dx_2 \wedge dx_3]_N=0$ & (\ref{s1})$\wedge\, dx_2$ \\ \hline

   2. &  $[x_3dx_2 \wedge dx_3]_N=0$ & (\ref{s1})$\wedge\, dx_3$\\ \hline

   3. &  $[x_1dx_1 \wedge dx_2]_N=0$  & (\ref{s2})$\wedge\, dx_2\;$ and row 2. \\\hline

    4. & $[x_1dx_1 \wedge dx_3]_N=0$  & (\ref{s2})$\wedge\, dx_3\;$ and row 1. \\\hline

    5. &  $[x_3dx_1 \wedge dx_2]_N=[x_2dx_3\wedge dx_1]_N$ & (\ref{s1})$\wedge\, dx_1$\\\hline

    6. &  $[2x_2dx_1 \wedge dx_2]_N=(\mu-3)[x_3^{\mu-4}dx_3\wedge dx_1]_N$ & (\ref{s2})$\wedge\, dx_1$ \\ \hline

    7. & $[x_1^2dx_2 \wedge dx_3]_N=0$ & \begin{tabular}{c} rows 1. and 2. \\
       and $[x_1^2]_N=[x_2^2+x_3^{\mu-3}]_N$ \end{tabular} \\ \hline

    8. & $[x_3^2dx_1 \wedge dx_2]_N=0$ & (\ref{s1})$\wedge\, x_3 dx_1$  and $[x_2 x_3]_N=0$\\  \hline

    9. & $[x_2^2dx_1 \wedge dx_2]_N=0$ & (\ref{s2})$\wedge\, x_2 dx_1$  and $[x_2 x_3]_N=0$\\  \hline

\end{tabular}
\end{center}
\smallskip
\caption{\small Relations towards calculating $[\Lambda^2(\mathbb R^{2n})]_N$ for $N=S_{\mu}$}\label{tabs1}
\end{table}
\end{small}

Table \ref{tabs1} and Proposition \ref{d-wedge} easily imply the
following proposition:

\begin{prop}
\label{s-all}
$[\Lambda ^{2}(\mathbb R^{2n})]_{S_{\mu}}$ is a $\mu +1$-dimensional vector space spanned
by the algebraic restrictions to $S_{\mu}$ of the $2$-forms

\smallskip

$\theta _1=dx_1\wedge dx_3,\;\;   \theta _2=dx_2\wedge dx_3, \;\; \theta_3 = dx_1\wedge dx_2,$

\smallskip

$\sigma _1 = x_3dx_1\wedge dx_2,\;\;  \sigma _2 = x_1 dx_2\wedge dx_3, $

\smallskip

$\theta _{4+k}= x_3^kdx_1\wedge dx_3$,\;\; dla $1\leq k\leq \mu-4$.

\end{prop}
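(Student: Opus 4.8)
The plan is to show that the nine relations in Table \ref{tabs1}, together with the two defining relations \eqref{s1} and \eqref{s2}, suffice to reduce the algebraic restriction of every monomial $2$-form to a linear combination of the $\mu+1$ listed generators, and then to verify that these generators are linearly independent. First I would observe that by Proposition \ref{d-wedge} and the quasi-homogeneity of $S_{\mu}$, the space $[\Lambda^2(\mathbb R^{2n})]_{S_{\mu}}$ is spanned by algebraic restrictions of monomial $2$-forms $x^{\alpha}\,dx_i\wedge dx_j$. Proposition \ref{zero-at-zero} forces any form with zero algebraic restriction to vanish on the tangent space at $0$; since $S_{\mu}$ lies in the $\{x_{\ge 4}=0\}$ subspace, Proposition \ref{reduction} lets me restrict attention to $2$-forms in the variables $x_1,x_2,x_3$ only, discarding all generators involving $dx_{\ge 4}$.

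Next I would run a \emph{reduction algorithm} on monomials $x_1^a x_2^b x_3^c\, dx_i\wedge dx_j$ in the three remaining variables. The relation $[x_2 x_3]_{S_{\mu}}=0$ kills any monomial coefficient divisible by $x_2 x_3$, and the relation $[x_1^2]_{S_{\mu}}=[x_2^2+x_3^{\mu-3}]_{S_{\mu}}$ (coming from \eqref{defs}) lets me trade every factor $x_1^2$ for $x_2^2 + x_3^{\mu-3}$, so I may assume each monomial carries $x_1$ to at most the first power and never carries $x_2 x_3$ simultaneously. The rows of Table \ref{tabs1} then eliminate the surviving higher-order terms: rows 1, 2, 7, 8, 9 annihilate the relevant $x_2$- and $x_3$-weighted forms $dx_2\wedge dx_3$ and $dx_1\wedge dx_2$, rows 3 and 4 remove $x_1$-multiples of $dx_1\wedge dx_2$ and $dx_1\wedge dx_3$, while rows 5 and 6 express the remaining mixed forms $x_3\,dx_1\wedge dx_2$ and $x_2\,dx_1\wedge dx_2$ in terms of the chosen generators $\sigma_1,\sigma_2$ and the $\theta_{4+k}$. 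Iterating, every monomial collapses onto the span of $\theta_1,\theta_2,\theta_3,\sigma_1,\sigma_2$ and $\theta_{4+k}$ for $1\le k\le \mu-4$, which is exactly $3+2+(\mu-4)=\mu+1$ forms.

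The main obstacle, and the part requiring genuine care rather than bookkeeping, is proving \textbf{linear independence} of these $\mu+1$ generators, i.e. that the spanning set is actually a basis and the dimension is exactly $\mu+1$ and not smaller. For this I would argue that no nontrivial combination lies in $\mathcal A^2_0(S_{\mu},\mathbb R^{2n})$. The cleanest route is to exhibit, for each proposed relation $\sum \lambda_i a_i = [\alpha + d\beta]_{S_{\mu}}$ with $\alpha,\beta$ vanishing on $S_{\mu}$, enough test data to force all $\lambda_i=0$: one pairs the candidate combination against the two branches $C_1$ (the $A_1$ component) and $C_2$ (the $A_{\mu-4}$ component), using that a monomial vanishing on $S_{\mu}$ must be divisible by a generator of the ideal of $S_{\mu}$, and tracks the distinct quasi-homogeneous weights of $\theta_1,\theta_2,\theta_3,\sigma_1,\sigma_2,\theta_{4+k}$. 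Because the $\theta_{4+k}$ have strictly increasing $x_3$-degree and hence strictly increasing quasi-homogeneous weight, they cannot interfere with one another or with the low-weight forms, and a weight-by-weight comparison isolates each coefficient. I expect the independence of $\sigma_1,\sigma_2$ from $\theta_3$ at the shared weight to be the one spot where an explicit computation on the parameterized branches is unavoidable.
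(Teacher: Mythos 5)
Your spanning argument is essentially the paper's own proof: the authors derive the relations \eqref{s1} and \eqref{s2}, multiply them by suitable $1$-forms to obtain the nine rows of Table \ref{tabs1}, and then simply state that these together with Proposition \ref{d-wedge} imply Proposition \ref{s-all} --- i.e.\ exactly the monomial collapse you describe. One detail your reduction algorithm should make explicit is why the list $\theta_{4+k}$ terminates at $k=\mu-4$: modulo algebraic restriction $x_3^{\mu-3}\equiv x_1^2-x_2^2$, and both $[x_1^2\,dx_1\wedge dx_3]_{S_{\mu}}=0$ (row 4 of Table \ref{tabs1} multiplied by $x_1$) and $[x_2^2\,dx_1\wedge dx_3]_{S_{\mu}}=0$ (wedge \eqref{s1} with $x_2\,dx_1$ and use $[x_2x_3]_{S_{\mu}}=0$), whence $[x_3^{\mu-3}\,dx_1\wedge dx_3]_{S_{\mu}}=0$; without this observation the iteration does not land in the claimed $(\mu+1)$-element span.

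Where you genuinely depart from the paper is the independence half, which the paper omits entirely (its dimension count is consistent with the general fact, quoted in the introduction and underlying Proposition \ref{s_baza}, that for a $1$-dimensional quasi-homogeneous ICIS the algebraic restrictions of closed $2$-forms form a space of dimension equal to the multiplicity, here $\mu$). Your plan for this half is viable but needs two repairs. First, literally ``pairing the candidate combination against the branches'' cannot work: the pullback of any $2$-form to a $1$-dimensional curve vanishes identically, so branch evaluation detects nothing; the workable mechanism is the other one you name, namely that $\mathcal A^2_0(S_{\mu},\mathbb R^{2n})$ coincides, modulo forms with ideal coefficients, with $I\Lambda^2+dI\wedge\Lambda^1$ for the quasi-homogeneous ideal $I=(x_1^2-x_2^2-x_3^{\mu-3},\,x_2x_3,\,x_{\ge4})$, hence is a \emph{graded} submodule, and non-membership can be tested quasi-degree by quasi-degree via divisibility by the generators. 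Second, the weights do not ``isolate each coefficient'' as you claim: for even $\mu$ the degree collisions are exactly the pairs $\{\theta_1,\theta_2\}$ (quasi-degree $r+2$) and $\{\sigma_1,\sigma_2\}$ (quasi-degree $2r+2$), while for odd $\mu$ certain $\theta_{4+k}$ additionally collide with $\theta_3$ and with $\sigma_1,\sigma_2$; by contrast $\sigma_1,\sigma_2$ and $\theta_3$, the spot you single out, lie in \emph{different} degrees ($2r+2$ versus $2r$, resp.\ $r+1$ versus $r$). So the explicit within-degree checks are indeed unavoidable, but at different places than you predict. With these corrections your proposal is sound and is in fact more complete than the paper, which proves only the spanning statement.
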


Proposition \ref{s-all} and results of section \ref{method}  imply
the following description of the space $[Z^2(\mathbb R^{2n})]_{S_{\mu}}$ and the manifold $[{\rm Symp} (\mathbb R^{2n}]_{S_{\mu}}$.

\begin{prop}
\label{s_baza} The space $[Z^2(\mathbb R^{2n})]_{S_{\mu}}$ has dimension $\mu$.
It is spanned by the algebraic restrictions to $S_{\mu}$ of the $2$-forms
$$\theta _1,\theta_2,\theta _3,\; \ \theta _4 = \sigma _1- \sigma _2,\;\;\theta _{4+k}= x_3^kdx_1\wedge dx_3,\;\; for\;\; 1\leq k\leq \mu-4.$$
If $n\ge 3$ then $[{\rm Symp} (\mathbb R^{2n})]_{S_{\mu}} = [Z^2(\mathbb R^{2n})]_{S_{\mu}}$.
The manifold $[{\rm Symp} (\mathbb R^{4})]_{S_{\mu}}$ is an open part of the $\mu$-space $[Z^2(\mathbb R^{4})]_{S_{\mu}}$
consisting of algebraic restrictions of the form $[c_1\theta _1 + \cdots + c_{\mu}\theta _{\mu}]_{S_{\mu}}$ such that $(c_1,c_2,c_3)\ne (0,0,0)$.
\end{prop}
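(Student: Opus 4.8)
The plan is to establish the proposition in three stages: first determine the space $[Z^2(\mathbb R^{2n})]_{S_\mu}$ of algebraic restrictions of closed $2$-forms, then treat the symplectic realizability for $n \ge 3$, and finally handle the delicate case $n = 2$ where realizability imposes a genuine open condition.

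First, to find $[Z^2(\mathbb R^{2n})]_{S_\mu}$ I would apply Proposition \ref{th-all-closed} directly to the basis produced in Proposition \ref{s-all}. The recipe there says: take the spanning set $\theta_1, \theta_2, \theta_3, \sigma_1, \sigma_2, \theta_{4+k}$ and compute the algebraic restrictions of their exterior derivatives, then separate the closed ones from those whose derivatives are linearly independent. The forms $\theta_1,\theta_2,\theta_3$ and $\theta_{4+k}$ are already closed, so they contribute to the closed subspace. The only genuine computation is with $\sigma_1 = x_3\,dx_1\wedge dx_2$ and $\sigma_2 = x_1\,dx_2\wedge dx_3$: one computes $d\sigma_1 = dx_3\wedge dx_1\wedge dx_2$ and $d\sigma_2 = dx_1\wedge dx_2\wedge dx_3$, so that $d(\sigma_1-\sigma_2)$ has zero algebraic restriction while $\sigma_1$ and $\sigma_2$ individually do not. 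Setting $\theta_4 = \sigma_1 - \sigma_2$ then gives a closed representative, and Proposition \ref{th-all-closed} yields that $\theta_1,\theta_2,\theta_3,\theta_4,\theta_{4+k}$ span $[Z^2(\mathbb R^{2n})]_{S_\mu}$. Since exactly one dimension is lost relative to the $(\mu+1)$-dimensional full space, the closed space has dimension $\mu$, provided one checks these $\mu$ restrictions remain linearly independent (which follows because $\theta_4$ is a nonzero combination not already in the span of the others modulo $\mathcal A_0^2$).

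Next, for the realizability statements I would invoke Proposition \ref{rank}. Here $N = S_\mu$ is a $1$-dimensional curve lying in the $3$-dimensional submanifold $M = \{x_{\ge 4} = 0\}$, so the minimal embedding dimension is $r = 3$. Proposition \ref{rank} states that a closed algebraic restriction $[\theta]_N$ is symplectically realizable precisely when $\mathrm{rank}(\theta|_{T_0 M}) \ge 2r - 2n = 6 - 2n$. For $n \ge 3$ the right-hand side is $\le 0$, so the rank condition is automatic and every closed algebraic restriction is realizable; hence $[\mathrm{Symp}(\mathbb R^{2n})]_{S_\mu} = [Z^2(\mathbb R^{2n})]_{S_\mu}$. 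For $n = 2$ the condition becomes $\mathrm{rank}(\theta|_{T_0 M}) \ge 2$, i.e. the restriction to the $3$-dimensional tangent space $T_0 M$ must be nonzero. Writing a general element as $[c_1\theta_1 + \cdots + c_\mu\theta_\mu]_{S_\mu}$, I would evaluate the representative at the origin: the forms $\theta_4,\dots,\theta_\mu$ all vanish at $0$ (they carry factors $x_3$ or $x_1$), while $\theta_1 = dx_1\wedge dx_3$, $\theta_2 = dx_2\wedge dx_3$, $\theta_3 = dx_1\wedge dx_2$ are precisely the three coordinate $2$-forms on $T_0 M = \mathrm{span}(\partial_{x_1},\partial_{x_2},\partial_{x_3})$. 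Thus $\theta|_{T_0 M}$ is nonzero exactly when $(c_1,c_2,c_3) \ne (0,0,0)$, giving the claimed open condition.

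The main obstacle is the $n=2$ realizability analysis, since it requires correctly identifying the minimal submanifold $M$ and verifying which basis forms survive evaluation at the origin on $T_0 M$; the rank-$2$ threshold must be matched exactly to the vanishing pattern of the $\theta_i$. The closed-space computation is routine given Proposition \ref{th-all-closed}, and the $n \ge 3$ case is immediate from the rank inequality, so the care concentrates on confirming that $(c_1,c_2,c_3)\ne(0,0,0)$ is both necessary and sufficient for a nonzero, hence rank-$\ge 2$, restriction to the $3$-plane $T_0 M$.
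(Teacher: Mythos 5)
Your proposal is correct and follows essentially the same route as the paper, which derives Proposition \ref{s_baza} directly from Proposition \ref{s-all} together with Propositions \ref{th-all-closed} and \ref{rank}: the observation that $\theta_4=\sigma_1-\sigma_2$ is closed while $[d\sigma_2]_{S_\mu}\ne 0$ cuts the dimension from $\mu+1$ to $\mu$, and the rank criterion with minimal embedding dimension $r=3$ (automatic for $n\ge 3$, and equivalent to $(c_1,c_2,c_3)\ne(0,0,0)$ via evaluation at $0$ on $T_0M$ for $n=2$) gives the realizability claims. Your identification of the rank-$2$ threshold with nonvanishing of the $2$-form at the origin, justified by the evenness of the rank and the vanishing of $\theta_4,\dots,\theta_\mu$ at $0$, matches the intended argument exactly.
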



\begin{thm}
\label{klas_s} $ \ $

\smallskip

\noindent (i) \ Any algebraic restriction in $[Z^2(\mathbb R^{2n})]_{S_{\mu}}$ can be brought by a symmetry of $S_{\mu}$ to one of the normal forms $[S_{\mu}]^{i,j}$ given in the second column of Table \ref{tabs};

\smallskip

\noindent (ii) \ The singularity classes corresponding to the normal forms are disjoint;

\smallskip

\noindent (iii) \ The parameters $c_i$ of the normal forms $[S_{\mu}]^{i,j}$ are moduli.

\smallskip

\noindent (iv)  \ The codimension in $[Z ^2(\mathbb R^{2n})]_{S_{\mu}}$ of the singularity class
corresponding to the normal form  $[S_{\mu}]^{i,j}$ is equal to $i$.

\end{thm}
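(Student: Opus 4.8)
The plan is to establish the four parts of Theorem \ref{klas_s} by working entirely in the finite-dimensional space $[Z^2(\mathbb R^{2n})]_{S_{\mu}}$, whose explicit basis $\theta_1,\dots,\theta_{\mu}$ is provided by Proposition \ref{s_baza}. The central computational engine will be the action on this space of the group of local diffeomorphisms preserving $S_{\mu}$. First I would determine the symmetry group of $S_{\mu}$, or at least a sufficiently large subgroup of it, by writing down the quasi-homogeneous diffeomorphism-germs $\Phi$ satisfying $\Phi(S_{\mu})=S_{\mu}$. Since $S_{\mu}$ has two invariant branches $C_1$ and $C_2$ (the $A_1$ and $A_{\mu-4}$ components noted after \eqref{defs}), the weights $w(x_1)=w(x_2)$, $w(x_3)$ must be respected, so one obtains an explicit (finite-parameter plus flow) description of the admissible $\Phi$. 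I would then compute the induced pullback action $\Phi^*$ on each basis form $\theta_i$ modulo $\mathcal A_0^2(S_{\mu},\mathbb R^{2n})$, using the relations of Table \ref{tabs1} to re-express everything back in the fixed basis. This linearizes the problem: the orbit of a general algebraic restriction $[c_1\theta_1+\cdots+c_{\mu}\theta_{\mu}]_{S_{\mu}}$ is the image of its coefficient vector under the resulting (generally nonlinear, because of the parameters) transformation group.

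For part (i), I would use these transformations to normalize coefficients in a carefully chosen order, exploiting the quasi-homogeneous grading so that low-weight terms are reduced first and cannot be reintroduced by subsequent moves. The strategy is the standard one for such classifications: identify which coefficients can be scaled to $0$ or $1$, which can be killed outright, and which survive as the essential parameters indexing the strata. The leading invariant should be the smallest index $k$ for which the relevant coefficient is nonzero (reflecting the order of tangency / vanishing encoded by the forms $\theta_{4+k}=x_3^k\,dx_1\wedge dx_3$), and this organizes the list into the families $[S_{\mu}]^{i,j}$. Carrying this out case-by-case over the ranges of $k$ and over whether $(c_1,c_2,c_3)$ vanishes yields exactly the normal forms of Table \ref{tabs}.

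For part (ii), disjointness, I would extract from the normalization a complete set of discrete invariants — the pattern of which leading coefficients vanish, together with the codimension count — that separates the classes; two normal forms with different $(i,j)$ have distinct such invariants, so no symmetry can carry one to the other. Part (iii), that the surviving parameters $c_i$ are moduli, amounts to checking that within a fixed normal form no nontrivial symmetry acts on the $c_i$: I would verify that the isotropy computation from part (i) leaves these parameters fixed, i.e. the continuous parameter really cannot be scaled away. Part (iv), the codimension statement, follows by a tangent-space computation at the normal form: the codimension of the orbit in $[Z^2(\mathbb R^{2n})]_{S_{\mu}}$ equals $\mu$ minus the dimension of the tangent space to the orbit, and one reads off that this codimension is precisely the index $i$. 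Concretely I would compute the tangent space to the diffeomorphism orbit as the span of the derivatives $\frac{d}{dt}\big|_{t=0}\Phi_t^*[\omega]_{S_{\mu}}$ along one-parameter families $\Phi_t$ of symmetries, and match its codimension against $i$.

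The hard part will be the bookkeeping in the proof of part (i): organizing the symmetry action so that each normalization step is compatible with the quasi-homogeneous filtration and verifying that the moves used to kill a given coefficient do not disturb the ones already fixed. Getting the case split exactly right — the boundaries at $k=1$, at $k=\mu-5$, $\mu-6$, $\mu-4$, and the degenerate condition $c_3c_{\mu}=0$ appearing in $[S_{\mu}]^{\mu-4}_{\,\;2}$ — requires tracking which parameters become genuinely free versus which remain scalable, and this is where the even-versus-odd $\mu$ distinction in the weights may force separate treatment. The remaining parts (ii)–(iv) are then comparatively mechanical consequences of the explicit orbit description obtained in (i), so I expect to defer the longest computations (the full symmetry action and the tangent-space codimension count) to the proof section \ref{s-proof} referenced in the text.
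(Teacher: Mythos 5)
Your overall architecture is essentially the paper's: the paper also works in the $\mu$-dimensional space of Proposition \ref{s_baza}, proves (i) by the homotopy method --- i.e.\ exactly your one-parameter families, solving $\mathcal L_{V_t}\mathcal B_t = \dots$ for $V_t$ in a case split governed by which of $c_1,c_2,c_3,c_{4+k}$ vanish, followed by explicit quasi-homogeneous scalings to normalize the leading coefficient --- proves (iii) by checking that $[\theta_2]_{S_{\mu}}$, $[\theta_3]_{S_{\mu}}$, etc.\ do not lie in the tangent space to the orbit read off from the infinitesimal-action table, and obtains (iv) from the same solvability conditions. One simplification you should import rather than re-derive: instead of parametrizing the symmetry group of $S_{\mu}$ directly (which is hard, since symmetries need not be quasi-homogeneous), the paper uses the fact that every vector field tangent to a $1$-dimensional complete intersection has the form $g_1E+g_2\mathcal H$ with $E$ the Euler field, and that the Hamiltonian part acts trivially on algebraic restrictions (\cite{DT}, Prop.\ 6.13); hence the relevant infinitesimal action is generated by the finitely many fields $E, x_1E, x_2E, x_3^lE$. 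This is precisely what makes your ``sufficiently large subgroup'' explicit and renders the linear systems in (i) triangular and solvable exactly under the nonvanishing hypotheses that produce the case boundaries you worry about.

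The genuine gap is in part (ii), which you dismiss as ``comparatively mechanical.'' Your proposed separating invariants --- the pattern of which leading coefficients vanish, together with the codimension --- do not suffice as stated. The vanishing pattern is a statement about a chosen basis, and you give no argument that it is preserved by arbitrary symmetries of $S_{\mu}$; and the numerical invariants genuinely fail to separate the list: for instance $(S_{\mu})^{2}_2$ and $(S_{\mu})^{2}_r$ both have codimension $2$, symplectic multiplicity $4$ and index of isotropy $0$. The paper fills this hole with Theorem \ref{geom-cond-s}: each normal form is converted into coordinate-free conditions on the pair $(N,\omega)$ --- vanishing of $\omega$ on the distinguished subspaces $\ell_{1\pm}$, $\ell_2$, $\ell_3$, $W$ (whose dependence only on $[\omega]_N$ comes from Proposition \ref{zero-at-zero}), together with the Lagrangian tangency orders $L_2$ and $Lt(N)$, and the relative order $L_{2:1}$ of Remark \ref{s-relative} to split the classes $(S_{\mu})^{i}_2$ from $(S_{\mu})^{i}_r$. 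Until you either construct such invariants or prove directly that your leading-coefficient pattern is invariant under the full (not just infinitesimal or quasi-homogeneous) symmetry group, disjointness of the classes remains unproven.
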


\renewcommand*{\arraystretch}{1.3}
\begin{center}
\begin{table}[!h]

    \begin{small}
    \noindent
    \begin{tabular}{|p{2.9cm}|p{5.7cm}|c|c|c|}
            \hline
    Symplectic class &   Normal forms for
                algebraic restrictions    & cod & $\mu ^{\rm sym}$ &  ind  \\ \hline
  $(S_{\mu})^{0}$ \;\;\;\; \; $(2n\ge 4)$ & $[S_{\mu}]^0: [\theta _1 + c_2\theta _2 + c_2\theta_3]_{S_{\mu}}$  &  $0$ & $2$ & $0$  \\  \hline

  $(S_{\mu})^{k}_2 \;\;\;\;\;\; (2n\ge 4)$ \newline for $1\leq k\leq \mu-\!5$ & $[S_{\mu}]^{k}_2: [\theta _2 + c_3\theta _3 + c_{4+k}\theta _{4+k}]_{S_{\mu}}$
  \newline 
  $c_{4+k}\!\ne\! 0$        &  $k$ & $k+2$ & $0$ \\ \hline

   $(S_{\mu})^{\mu-4}_2$ \; $(2n\ge 4)$& $[S_{\mu}]^{\mu-4}_2: [\theta _2 + c_3\theta_3+c_{\mu}\theta_{\mu}]_{S_{\mu}}$, \; $c_3c_{\mu}=0$
              & $\mu-4$ & $\mu-3$ & $0$     \\ \hline

   $(S_{\mu})^{1+k}_r$ \; $(2n\ge 4)$ \newline for  $1\leq k\leq \mu-\!6$ & $[S_{\mu}]^{1+k}_r: [\theta _3\!+c_{4+k}\theta _{4+k}\!+c_{5+k} \theta _{5+k}]_{S_{\mu}}$ \newline $c_{4+k}\!\ne\! 0$
                                         &  $k+1$ & $k+3$ & $0$ \\ \hline

    $(S_{\mu})^{\mu-4}_r$ \;\; $(2n\ge 4)$ & $[S_{\mu}]^{\mu-4}_r: [\theta _3 + c_{\mu-1}\theta _{\mu-1}]_{S_{\mu}}$ & $\mu-4$ & $\mu-3$ & $0$  \\ \hline

    $(S_{\mu})^{3,1}$ \;\;\; $(2n\ge 6)$ & $[S_{\mu}]^{3,1}: [c_4\theta _4 + \theta _5]_{S_{\mu}}$
     &  $3$ & $4 $ & $1$    \\ \hline

    $(S_{\mu})^{2+k,1}$ \; $(2n\ge 6)$ \newline for  $2\leq k\leq \mu -4$ & $[S_{\mu}]^{2+k,1}: [\theta _4 + c_{4+k}\theta _{4+k}]_{S_{\mu}}$ \newline $c_{4+k}\ne 0$ for  $2\leq k\leq \mu -5$
     &  $k+2$ & $k+3 $ & $1$    \\ \hline

    $(S_{\mu})^{3+k,k}$ \, $(2n\ge 6)$ \newline for  $2\leq k\leq \mu -4$ & $[S_{\mu}]^{3+k,k}: [\theta _{4+k}]_{S_{\mu}}$ \;\;for  $2\leq k\leq \mu -4$ &  $k+3$ & $k+3$ & $k$ \\ \hline

     $(S_{\mu})^{\mu}$ \;\;\;\;\; $(2n\ge 6)$ & $[S_{\mu}]^{\mu}: [0]_{S_{\mu}}$ &  $\mu$ & $\mu$ & $\infty $ \\ \hline
\end{tabular}

\smallskip

\caption{\small Classification of symplectic $S_{\mu}$ singularities.  \newline
$cod$ -- codimension of the classes; \ $\mu ^{sym}$-- symplectic multiplicity; \newline $ind$ --  index of isotropness.}\label{tabs}

\end{small}
\end{table}
\end{center}
\medskip

The proof of Theorem \ref{klas_s} is presented in section
\ref{s-proof}.

In the first column of Table \ref{tabs}  by $(S_{\mu})^{i,j}$ we denote a subclass of $(S_{\mu})$ consisting of $N\in (S_{\mu})$ such that the algebraic restriction $[\omega ]_N$ is diffeomorphic to some algebraic restriction of the normal form $[S_{\mu}]^{i,j}$ where $i$ is the codimension of the class and $j$ denotes index of isotropness of the class. Classes $(S_{\mu})^{i}_2$ and $(S_{\mu})^{i}_r$ can be distinguished geometrically (see section \ref{s-geom_cond}) and by relative Lagrangian tangency order - $L_{2:1}$ defined in  section \ref{s-lagr} (remark \ref{s-relative}). The classes $(S_{\mu})^{i}_2$ have $L_{2:1}=\frac{2}{\lambda_{\mu}}$ and the classes $(S_{\mu})^{i}_r$ have $L_{2:1}=\frac{r}{\lambda_{\mu}}$ where $\lambda_{\mu}\!=1$ for even $\mu$ and $\lambda_{\mu}\!=2$ for odd $\mu$.

Theorem \ref{thm A}, Theorem \ref{klas_s} and Proposition \ref{s_baza} imply the following statement.

\begin{prop}
\label{def-classes-s} The classes $(S_{\mu})^{i,j}$ are symplectic
singularity classes, i.e. they are closed with respect to the
action of the group of symplectomorphisms. The class $(S_{\mu})$ is
the disjoint union of the classes $(S_{\mu})^{i,j}$.
The classes $(S_{\mu})^0, (S_{\mu})^{i}_2$ and $(S_{\mu})^{i}_r$ for $1\leq i\leq \mu -4$ are non-empty for
any dimension $2n\ge 4$ of the symplectic space; the classes
$(S_{\mu})^{i,1}$ for $3\leq i\leq \mu -2$ and $(S_{\mu})^{i,i-3}$ for $5\leq i \leq \mu -1$ and $(S_{\mu})^{\mu}$ are empty if $n=2$ and not empty if $n\ge 3$.
\end{prop}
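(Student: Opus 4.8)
The plan is to derive the three assertions directly from Theorem \ref{thm A}, Theorem \ref{klas_s} and Proposition \ref{s_baza}. That each $(S_{\mu})^{i,j}$ is closed under symplectomorphisms and that $(S_{\mu})$ is their disjoint union is essentially a reformulation of the classification already obtained. By definition $N\in (S_{\mu})^{i,j}$ precisely when $[\omega]_N$ is diffeomorphic to the normal form $[S_{\mu}]^{i,j}$ of Table \ref{tabs}. If $\Phi$ is a symplectomorphism of $(\mathbb R^{2n},\omega)$ and $N'=\Phi(N)$, then $\Phi^*\omega=\omega$ gives $\Phi^*([\omega]_{N'})=[\omega]_N$, so $[\omega]_{N'}$ and $[\omega]_N$ are diffeomorphic and $N'$ lies in the same class; hence each class is symplectically invariant. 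For the decomposition, every $N\in (S_{\mu})$ is diffeomorphic to the quasi-homogeneous set $S_{\mu}$, so $[\omega]_N$ represents an element of $[Z^2(\mathbb R^{2n})]_{S_{\mu}}$; Theorem \ref{klas_s}(i) brings it to at least one normal form and Theorem \ref{klas_s}(ii) shows distinct normal forms give disjoint classes, so $(S_{\mu})$ is the disjoint union of the $(S_{\mu})^{i,j}$.

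For the non-emptiness statements I would split into the cases $n\ge 3$ and $n=2$ and use Proposition \ref{s_baza}, which identifies $[{\rm Symp}(\mathbb R^{2n})]_{S_{\mu}}$ inside $[Z^2(\mathbb R^{2n})]_{S_{\mu}}$. When $n\ge 3$ one has $[{\rm Symp}(\mathbb R^{2n})]_{S_{\mu}}=[Z^2(\mathbb R^{2n})]_{S_{\mu}}$, so each normal form $[S_{\mu}]^{i,j}$ is realized by a symplectic form $\omega_1$ on $\mathbb R^{2n}$, i.e. $[\omega_1]_{S_{\mu}}=[S_{\mu}]^{i,j}$. Applying Darboux's theorem to obtain $\psi$ with $\psi^*\omega=\omega_1$ for the fixed form $\omega=\sum dp_i\wedge dq_i$, the set $N=\psi(S_{\mu})$ is diffeomorphic to $S_{\mu}$ and $\psi^*([\omega]_N)=[S_{\mu}]^{i,j}$, so $N\in (S_{\mu})^{i,j}$; hence every class is non-empty for $n\ge 3$. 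The index-$0$ classes $(S_{\mu})^{0}, (S_{\mu})^{i}_2, (S_{\mu})^{i}_r$ are in addition non-empty for $n=2$: their normal forms in Table \ref{tabs} each carry one of $\theta_1,\theta_2,\theta_3$ with non-zero coefficient, hence satisfy the open condition $(c_1,c_2,c_3)\ne(0,0,0)$ of Proposition \ref{s_baza} and are realizable by symplectic forms on $\mathbb R^4$, after which the same Darboux argument produces a representative $N\subset\mathbb R^4$.

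It then remains to show that $(S_{\mu})^{i,1}$, $(S_{\mu})^{i,i-3}$ and $(S_{\mu})^{\mu}$ are empty when $n=2$. The normal forms of all these classes are combinations of $\theta_4,\dots,\theta_{\mu}$ only, that is they have $(c_1,c_2,c_3)=(0,0,0)$. Suppose, for contradiction, that some $N\subset\mathbb R^4$ lay in one of them. Then there is a diffeomorphism $\Phi$ with $\Phi(S_{\mu})=N$ and $\Phi^*([\omega]_N)=[S_{\mu}]^{i,j}$; since $\Phi^*\omega$ is again symplectic and $[\Phi^*\omega]_{S_{\mu}}=[S_{\mu}]^{i,j}$, the normal form $[S_{\mu}]^{i,j}$ would be realized by a symplectic form on $\mathbb R^4$, contradicting Proposition \ref{s_baza} because $(c_1,c_2,c_3)=(0,0,0)$. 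Hence these classes are empty for $n=2$, whereas their non-emptiness for $n\ge 3$ was already settled above.

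The step I expect to be the main obstacle is the last one: one must ensure that the obstruction $(c_1,c_2,c_3)=(0,0,0)$ is a genuine invariant of the diffeomorphic equivalence of algebraic restrictions, and not an artifact of the chosen coordinates, so that non-realizability—and hence emptiness in $\mathbb R^4$—truly transfers to every representative $N$. This is precisely what the intrinsic rank criterion of Proposition \ref{rank} guarantees: with the minimal ambient dimension of $S_{\mu}$ equal to $3$, realizability on $\mathbb R^4$ amounts to $\mathrm{rank}(\theta\vert_{T_0M})\ge 2$, a condition manifestly preserved by symmetries of $S_{\mu}$, which is exactly the content encoded in Proposition \ref{s_baza}.
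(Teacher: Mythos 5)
Your proposal is correct and follows essentially the same route as the paper, which derives this proposition directly from Theorem \ref{thm A}, Theorem \ref{klas_s} and Proposition \ref{s_baza} (with the realization of normal forms by symplectic forms and the Darboux transfer carried out in section \ref{s-normal}). Your expansion of the deductions---symplectic invariance via pullback of algebraic restrictions, disjointness from Theorem \ref{klas_s}(ii), non-emptiness via realizability plus Darboux, and emptiness for $n=2$ from the condition $(c_1,c_2,c_3)\ne(0,0,0)$ grounded in the rank criterion of Proposition \ref{rank}---is accurate and fills in exactly the steps the paper leaves implicit.
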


\subsection{Symplectic normal forms. Proof of Theorem \ref{s-main}}
\label{s-normal}

Let us transfer the normal forms $[S_{\mu}]^{i,j}$  to symplectic normal forms using Theorem \ref{th-all-closed},
i.e. realizing the algorithm in section \ref{method}. Fix a family $\omega ^{i,j}$ of symplectic
forms on $\mathbb R^{2n}$ realizing the family $[S_{\mu}]^{i,j}$ of algebraic restrictions.  We can fix, for example

\smallskip

\noindent $\omega ^0 = \theta _1 + c_2\theta _2 + c_3\theta _3 +
dx_2\wedge dx_4 + \sum_{i=3}^n dx_{2i-1} \wedge dx_{2i};$

\smallskip

\noindent $\omega ^{k}_2\! = \theta _2 + c_3\theta _3 + c_{4+k}\theta _{4+k} +
 dx_1\wedge dx_4+ \sum_{i=3}^n dx_{2i-1} \wedge dx_{2i}, \  c_{4+k}\!\ne\! 0, \ 1\leq k\leq \mu-\!5; $

 \smallskip

\noindent $\omega ^{\mu-4}_2\! = \theta _2 + c_3\theta _3 + c_{\mu}\theta_{\mu} +
 dx_1\wedge dx_4+ \sum_{i=3}^n dx_{2i-1} \wedge dx_{2i}, \ \ c_3 c_{\mu}=0;$

 \smallskip

\noindent $\omega ^{1+k}_r\!= \theta_3\!+ c_{4+k}\theta_{4+k}\!+ c_{5+k}\theta _{5+k}\!+
 \sum_{i=2}^n dx_{2i-1} \wedge dx_{2i}, \ c_{4+k}\!\ne\! 0, \ 1\!\leq\! k\!\leq\! \mu-\!6;$

 \smallskip

\noindent $\omega ^{\mu-4}_r\! =  \theta _3 + c_{\mu-1} \theta _{\mu-1} +
 dx_4\wedge dx_3+ \sum_{i=3}^n dx_{2i-1} \wedge dx_{2i}; $

\smallskip

\noindent $\omega ^{3,1}\! = c_4\theta _4\!+ \theta _{5}\!+ \sum_{i=1}^3 dx_i \wedge dx_{i+3}\!+ \sum_{i=4}^n dx_{2i-1} \wedge dx_{2i};$

\smallskip

\noindent $\omega ^{2+k,1}\! = \theta _4\!+ c_{4+k}\theta _{4+k}\!+ \sum_{i=1}^3 dx_i \wedge dx_{i+3}\!+ \sum_{i=4}^n dx_{2i-1} \wedge dx_{2i},  \ 2\!\leq\!k\!\leq\!\mu-\!4;$

\smallskip


\noindent $\omega ^{3+k,k}\! = \theta _{4+k} + \sum_{i=1}^3 dx_i \wedge dx_{i+3}+ \sum_{i=4}^n dx_{2i-1} \wedge dx_{2i}, \  2\leq k\leq \mu-\!4;$

\smallskip

\noindent $\omega ^{\mu} = \sum_{i=1}^3 dx_i \wedge dx_{i+3}+ \sum_{i=4}^n dx_{2i-1} \wedge dx_{2i}.$

\medskip

 Let $\omega = \sum_{i=1}^m dp_i \wedge dq_i$, where
$(p_1,q_1,\cdots,p_n,q_n)$ is the coordinate system on $\mathbb
R^{2n}, n\ge 3$ (resp. $n=2$). Fix a family $\Phi ^{i,j}$ of local diffeomorphisms which bring the family of symplectic forms $\omega ^{i,j}$ to the symplectic form $\omega $: $(\Phi ^{i,j})^*\omega ^{i,j} = \omega $. Consider the families $S_{\mu}^{i,j} = (\Phi ^{i,j})^{-1}(S_{\mu})$. Any stratified submanifold of the symplectic space $(\mathbb R^{2n},
\omega )$ which is diffeomorphic to $S_{\mu}$ is symplectically
equivalent to one and only one of the normal forms $S_{\mu}^{i,j}$ presented in Theorem
\ref{s-main}. By Theorem \ref{klas_s} we obtain that  parameters
$c_i$ of the normal forms are moduli.

\subsection{Distinguishing symplectic classes of $S_{\mu}$ by Lagrangian tangency orders}
\label{s-lagr}

Lagrangian tangency orders will be used to obtain a more detailed
classification of $(S_{\mu})$. A curve $N\in (S_{\mu})$ may be described
as a union of two invariant components $C_1$ and $C_2$. $C_1$ is diffeomorphic to $A_1$ singularity and consists of two parametrical branches $B_{1+}$ and $B_{1-}$. $C_2$ is diffeomorphic to $A_{\mu-4}$ singularity and consists of one parametric branch if $\mu$ is even number and consists of two branches $B_{2+}$ and $B_{2-}$ if $\mu$ is odd number. The parametrization of these branches is given in the second column of Table
\ref{tabseven-lagr} or Table
\ref{tabsodd-lagr}. To distinguish the classes of this singularity completely we need following three invariants:
\begin{itemize}
  \item $Lt(N)=Lt(C_1,C_2)
      $
  \item $L_1=Lt(C_1)=\max\limits _L (\min \{t(B_{1+},L), t(B_{1-},L)\})$
  \item $L_2=Lt(C_2)
  $
\end{itemize}

where $L$ is a smooth Lagrangian submanifold of the symplectic
space.

\medskip

 Considering the triples $(Lt(N), L_1, L_2)$ we obtain detailed classification of
symplectic singularities of $S_{\mu}$. Some subclasses appear (see Table
\ref{tabseven-lagr} and \ref{tabsodd-lagr}) having a natural geometric interpretation
(Table \ref{tabs-geom}).

\begin{thm}
\label{lagr-s} A stratified submanifold $N\in (S_{\mu})$ of a
symplectic space $(\mathbb R^{2n}, \omega )$ with the canonical
coordinates $(p_1, q_1, \cdots, p_n, q_n)$ is symplectically
equivalent to one and only one of the curves presented in the
second column of Table \ref{tabseven-lagr} or \ref{tabsodd-lagr}. The parameters $c_i$ are moduli. The Lagrangian tangency orders of the curve are presented in  the fifth, the sixth and the seventh column of these tables and the codimension of the classes is given in the fourth column.
\end{thm}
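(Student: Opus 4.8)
\emph{Proof plan.} The plan is to obtain the symplectic classification itself --- the \emph{one and only one} statement, the assertion that the $c_i$ are moduli, and the codimensions in the fourth column --- as a direct consequence of Theorems \ref{s-main} and \ref{klas_s}, and then to spend the real effort on computing the three tangency invariants $Lt(N)$, $L_1=Lt(C_1)$ and $L_2=Lt(C_2)$ for each normal form. Indeed, once each normal form $S_\mu^{i,j}$ of Theorem \ref{s-main} is parametrized by its invariant components $C_1$ (of type $A_1$) and $C_2$ (of type $A_{\mu-4}$), the curves listed in the second columns of Tables \ref{tabseven-lagr} and \ref{tabsodd-lagr} are parametrizations of precisely those normal forms; hence symplectic equivalence to one and only one of them is a restatement of Theorem \ref{s-main}, the modular nature of the $c_i$ is Theorem \ref{klas_s}(iii), and the codimensions coincide with those in Theorem \ref{klas_s}(iv). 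The two tables correspond to the two quasi-homogeneous weight systems of $S_\mu$: for even $\mu$ the component $C_2$ has a single parametric branch, whereas for odd $\mu$ it splits into $B_{2+},B_{2-}$, so the branch data and the invariants must be recorded separately. The genuinely new content is that the triples $(Lt(N),L_1,L_2)$, although constant on each symplectic orbit, depend on the moduli and \emph{jump} on special loci (e.g. where a coefficient such as $c_3$, $c_4$ or $c_\mu$ vanishes); recording these jumps is what produces the subclasses listed in the tables.

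To compute a single-component invariant, say $L_2=Lt(C_2)$, I would first determine the algebraic restriction $[\omega]_{C_2}$ of the symplectic form to that component. Whenever $[\omega]_{C_2}$ is representable by a closed $2$-form vanishing at $0$ --- equivalently, whenever the index of isotropy of $C_2$ is positive, which is detected through Proposition \ref{ii} --- Proposition \ref{lto} applies and gives $Lt(C_2)$ as the maximal order of vanishing along $C_2$ among all $1$-forms $\alpha$ with $[\omega]_{C_2}=[d\alpha]_{C_2}$. Solving this reduces to choosing the free coefficients of $\alpha$ so as to annihilate the lowest-order terms of the pullback of $\alpha$ to the branch, and the quasi-homogeneous grading keeps the relevant orders of vanishing transparent. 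When $[\omega]_{C_2}=0$ the component lies in a non-singular Lagrangian submanifold and $L_2=\infty$ by Theorem \ref{thm B}; when $[\omega]_{C_2}$ is not representable by a $2$-form vanishing at $0$ the invariant is small, and I would instead exhibit an explicit Lagrangian realizing the tangency together with a matching upper bound. The two-branch invariant $L_1=\max_L\min\{t(B_{1+},L),t(B_{1-},L)\}$ is handled in the same spirit, but now with two branches competing for a single $L$.

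For the multi-germ invariant $Lt(N)=Lt(C_1,C_2)$ I would use the multi-germ refinement of Proposition \ref{lto} introduced in \cite{DT}: one searches for a single $1$-form $\alpha$ (equivalently a single Lagrangian submanifold $L$) that realizes the algebraic restriction and vanishes to high order on \emph{every} branch at once, and $Lt(N)$ is the maximum over such $\alpha$ of the minimum of the branchwise orders of vanishing. The constraint that the same $L$ must serve all branches is exactly what forces $Lt(N)\le\min\{L_1,L_2\}$ in general, and it is what makes the relative invariant $L_{2:1}$ of Remark \ref{s-relative} --- which separates the families $(S_\mu)^i_2$ from $(S_\mu)^i_r$ according to the value $\tfrac{2}{\lambda_\mu}$ versus $\tfrac{r}{\lambda_\mu}$ --- enter the bookkeeping. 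Having tabulated all the triples as functions of the moduli and their degenerations, I would finally check that they, together with $L_{2:1}$, take distinct values on distinct rows, which both confirms the completeness of the refined classification and verifies the entries of the last three columns.

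The main obstacle is the coupled optimization in the computation of $Lt(N)$: unlike the component invariants, the branches of $C_1$ and $C_2$ cannot be treated independently, and the maximal simultaneous order of contact of a single Lagrangian with all branches depends delicately both on the modular coefficients and on the parity of $\mu$ through the two weight systems. Carrying the two weight systems through the calculation without error, pinning down the exact moduli loci on which the invariants degenerate, and matching every constructed Lagrangian with a sharp upper bound is where the substance of the proof lies; the classification, moduli and codimension statements are, by contrast, essentially a transcription of Theorems \ref{s-main} and \ref{klas_s}.
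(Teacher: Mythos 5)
Your proposal follows essentially the same route as the paper: the equivalence, moduli and codimension statements are imported from Theorems \ref{s-main} and \ref{klas_s} exactly as the paper does, and the invariants are computed just as in the paper's remark following the tables --- $L_1$ and $L_2$ from the algebraic restrictions $[\omega^{i,j}]_{C_1}$ and $[\omega^{i,j}]_{C_2}$ (via Proposition \ref{lto} and the $A_1$, $A_{\mu-4}$ data of \cite{DT}), $Lt(N)$ by definition together with the bound $Lt(N)\le\min(L_1,L_2)$ and explicit Lagrangian submanifolds, with the relative invariant $L_{2:1}$ separating $(S_{\mu})^{i}_2$ from $(S_{\mu})^{i}_r$ as in Remark \ref{s-relative}. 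The plan is correct and matches the paper's proof in both structure and substance.
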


\setlength{\tabcolsep}{1mm}

\begin{center}
\begin{table}[h]

    \begin{small}
    \noindent
    \begin{tabular}{|p{1.3cm}|p{4.6cm}|p{2.3cm}|c|c|c|c|}
                    \hline
    Class &  Parametrization  of branches \newline $B_{1\pm}$ and $C_2$ & Conditions \newline for subclasses  & cod & $Lt(N)$   &  $L_1$ & $L_2$ \\ \hline

        $(S_{\mu})^0$ & $(t,0,\pm t,-c_3t,0,\cdots )$ & $c_3\ne 0$& $0$ & $1$ & $1$ & $r$ \\ \cline{3-7}
     $2n\ge 4$ & $(t^r\!,t^2\!,0,c_2t^2\!-c_3t^r\!,0,\cdots )$ & $c_3=0$ & $1$ & $2$ & $\infty$ & $r$ \\ \hline

$(S_{\mu})^{k}_2$  & $(t,0, \pm t, c_3 t,0,\cdots)$ &  $c_{4+k}\cdot c_3\!\ne\!0$ & $k$ & $1$ & $1$ & $r\!+\!2k$   \\ \cline{3-7}
$2n\ge 4$  & $(0,t^2,t^r, \frac{c_{4+k}}{k+1} t^{2+2k}, 0,\cdots)$ & $c_3\!=0, c_{4+k}\!\ne 0$  & $k\!+\!1$ & $2$ & $\infty$  & $r\!+\!2k$ \\ \hline

$(S_{\mu})^{\mu-4}_2$  & $(t,0, \pm t, c_3 t,0,\cdots)$ & $c_3\ne 0$   & $\mu\!-\!4$ & $1$ & $1$ & $\infty$   \\ \cline{3-7}
$2n\ge 4$  & $(0,t^2,t^r, \frac{c_{\mu}}{r} t^{2r}, 0,\cdots)$ & $c_3=0$ & $\mu\!-\!3$ & $2$  & $\infty$ & $\infty$ \\ \hline

    $(S_{\mu})^{1+\!k}_r$ & $(t, \pm t, 0, 0,\cdots)$ & $c_{k+4}\ne 0$ & $k\!+\!1$ & $1$ & $1$ & $r\!+\!2k$ \\
    $2n\ge 4$ & $(t^r\!,\! 0,\! (c_{4\!+\!k}\!+\!c_{5\!+\!k}t^2)t^{r+2k}\!,t^2\!, 0,\!\cdots)$ & $1\!\leq k \leq \mu\! -6$  &   &  &  &  \\ \hline

    $(S_{\mu})^{\mu-4}_r$  & $(t, \pm t,0, 0,\cdots)$ & $c_{\mu-1}\ne 0$  & $\mu\!-\!4$ & $1$ & $1$ & $3r\!-\!4$   \\ \cline{3-7}
 $2n\ge 4$  & $(t^r,0, c_{\mu-1}t^{3r-4}, t^2, 0,\cdots)$ & $c_{\mu-1}=0$ & $\mu\!-\!3$ &  $1$ & $1$ & $\infty$ \\   \hline

  $(S_{\mu})^{3,1}$  & $(t,0,\pm t,0, 0,\cdots)$ &   & $3$ & $r+2$ & $\infty$ & $r+2$   \\
 $2n\ge 6$  & $( t^r,\frac{1}{2}t^4,0,-c_4t^{r+2},t^2, 0,\cdots)$ &  &  &  &  &  \\ \hline

$(S_{\mu})^{2+\!k,1}$ & $(t,0,\pm t,0,0, 0, \cdots)$ & $c_{4+k}\ne 0$ & $k\!+\!2$ & $r+2$ & $\infty$ & $r\!+\!2k$ \\ 
    $2n\ge 6$ & $(\!t^r\!,\frac{c_{4+k}t^{2(k+1)}}{k+1},\!0,\!-t^{r\!+\!2}\!,t^2\!,\!0,\! \cdots)$ & $2\!\leq k \leq \mu\! -5$  &   &  &  &  \\ \cline{3-7}
      &     &   $k=\mu-4$ & $\mu\!-\!2$ &  $r+2$ & $\infty$ & $\infty$ \\   \hline

     $(S_{\mu})^{3+k,k}$ & $(t,0, \pm t,0,0, 0, \cdots)$ & $2\!\leq k \leq \mu\! -5$ & $\!k\!+3$ & $r\!+\!2k$ & $\infty$ & $r\!+\!2k$ \\ \cline{3-7}
    $2n\ge 6$ & $(t^{r},\frac{t^{2(k+1)}}{k+1}, 0, 0, t^2, 0, 0, \cdots)$ & $k=\mu-4$ & $\mu\!-\!1$ &  $3r-2$ & $\infty$ & $\infty$ \\   \hline

    $(S_{\mu})^{\mu}$  & $(t, 0,\pm t,0, 0,\cdots)$ &   & $\mu$ & $\infty$ & $\infty$ & $\infty$   \\ 
$2n\ge 6$  & $(t^r, 0, 0 , 0, t^2,0 ,\cdots)$ &  &  &  &  &  \\ \hline
\end{tabular}

\smallskip

\caption{\small Lagrangian tangency orders for symplectic classes of $S_{\mu}$ singularity ($\mu$ even).}\label{tabseven-lagr}

\end{small}
\end{table}
\end{center}

\medskip

\setlength{\tabcolsep}{1mm}

\begin{center}
\begin{table}[h]

    \begin{small}
    \noindent
    \begin{tabular}{|p{1.3cm}|p{4.7cm}|p{2.3cm}|c|c|c|c|}
                      \hline
    Class &  Parametrization  of branches \newline $B_{1\pm}$ and $B_{2\pm}$ & Conditions \newline for subclasses  & cod & $Lt(N)$   &  $L_1$ & $L_2$ \\ \hline

        $(S_{\mu})^0$ & $(t,0,\pm t,-c_3t,0,\cdots )$ & $c_3\ne 0$& $0$ & $1$ & $1$ & ${\frac{r}{2}}$ \\ \cline{3-7}
     $2n\ge 4$ & $(\pm t^{\frac{r}{2}}\!,t\!,0,c_2t^2\!\mp c_3t^{\frac{r}{2}}\!,0,\cdots )$ & $c_3=0$ & $1$ & $1$ & $\infty$ & ${\frac{r}{2}}$ \\ \hline

$(S_{\mu})^{k}_2$  & $(t,0, \pm t, c_3 t,0,\cdots)$ &  $c_{4+k}\cdot c_3\!\ne\!0$ & $k$ & $1$ & $1$ & $\frac{r}{2}\!+\!k$   \\ \cline{3-7}
$2n\ge 4$  & $(0,t,\pm t^{\frac{r}{2}}, \frac{c_{4+k}}{k+1} t^{1+k}, 0,\cdots)$ & $c_3\!=0, c_{4+k}\!\ne 0$  & $k\!+\!1$ & $1$ & $\infty$  & $\frac{r}{2}\!+\!k$ \\ \hline

$(S_{\mu})^{\mu-4}_2$  & $(t,0, \pm t, c_3 t,0,\cdots)$ & $c_3\ne 0$   & $\mu\!-\!4$ & $1$ & $1$ & $\infty$   \\ \cline{3-7}
$2n\ge 4$  & $(0,t,t^{\frac{r}{2}}, \frac{c_{\mu}}{r} t^{r}, 0,\cdots)$ & $c_3=0$ & $\mu\!-\!3$ & $1$  & $\infty$ & $\infty$ \\ \hline

    $(S_{\mu})^{1+\!k}_r$ & $(t, \pm t, 0, 0,\cdots)$ & $c_{k+4}\ne 0$ & $k\!+\!1$ & $1$ & $1$ & $\frac{r}{2}\!+\!k$ \\
    $2n\ge 4$ & $(\pm t^{\frac{r}{2}}\!,\! 0,\!\pm (c_{4\!+\!k}\!+\!c_{5\!+\!k}t)t^{\frac{r}{2}+k}\!,t\!, 0,\!\cdots)$ & $1\!\leq k \leq \mu\! -6$  &   &  &  &  \\ \hline

    $(S_{\mu})^{\mu-4}_r$  & $(t, \pm t,0, 0,\cdots)$ & $c_{\mu-1}\ne 0$  & $\mu\!-\!4$ & $1$ & $1$ & $\frac{3r}{2}\!-\!2$   \\ \cline{3-7}
 $2n\ge 4$  & $(\pm t^{\frac{r}{2}},0,\pm c_{\mu-1}t^{\frac{3r-4}{2}},t, 0,\cdots)$ &  $c_{\mu-1}=0$ & $\mu\!-\!3$ &  $1$ & $1$ & $\infty$ \\   \hline

  $(S_{\mu})^{3,1}$  & $(t,0,\pm t,0, 0,\cdots)$ &   & $3$ & $\frac{r}{2}\!+\!1$ & $\infty$ & $\frac{r}{2}\!+\!1$   \\
 $2n\ge 6$  & $(\pm t^{\frac{r}{2}},\frac{1}{2}t^2,0,\mp c_4t^{\frac{r+2}{2}},t, 0,\cdots)$ &  &  &  &  &  \\ \hline

$(S_{\mu})^{2+\!k,1}$ & $(t,0,\pm t,0,0, 0, \cdots)$ & $c_{4+k}\ne 0$ & $k\!+\!2$ & $\frac{r}{2}\!+\!1$ & $\infty$ & $\frac{r}{2}\!+\!k$ \\ 
    $2n\ge 6$ & $(\pm\!t^\frac{r}{2}\!,\frac{c_{4+k}t^{k+1}}{k+1},\!0,\!\mp t^{\frac{r+2}{2}}\!,t\!,\!0,\! \cdots)$ & $2\!\leq k \leq \mu\! -5$  &   &  &  &  \\ \cline{3-7}
      &     &   $k=\mu-4$ & $\mu\!-\!2$ &  $\frac{r}{2}\!+\!1$ & $\infty$ & $\infty$ \\   \hline

     $(S_{\mu})^{3+k,k}$ & $(t,0, \pm t,0,0, 0, \cdots)$ & $2\!\leq k \leq \mu\! -5$ & $\!k\!+3$ & $\frac{r}{2}\!+\!k$ & $\infty$ & $\frac{r}{2}\!+\!k$ \\ \cline{3-7}
    $2n\ge 6$ & $(\pm t^{\frac{r}{2}},\frac{t^{k+1}}{k+1}, 0, 0, t, 0, 0, \cdots)$ & $k=\mu-4$ & $\mu\!-\!1$ &  $\frac{3}{2}r\!-\!1$ & $\infty$ & $\infty$ \\   \hline

    $(S_{\mu})^{\mu}$  & $(t, 0,\pm t,0, 0,\cdots)$ &   & $\mu$ & $\infty$ & $\infty$ & $\infty$   \\ 
$2n\ge 6$  & $(\pm t^{\frac{r}{2}}, 0, 0 , 0, t,0 ,\cdots)$ &  &  &  &  &  \\ \hline
\end{tabular}

\smallskip

\caption{\small Lagrangian tangency orders for symplectic classes of $S_{\mu}$ singularity ($\mu$ odd).}\label{tabsodd-lagr}

\end{small}
\end{table}
\end{center}
\medskip

\begin{rem}
The numbers $L_1$ and $L_2$ can be easily calculated knowing Lagrangian tangency orders for $A_1$ and $A_{\mu-4}$ singularities (see Table 2 in \cite{DT}) or by direct applying the definition of the Lagrangian tangency order and finding the nearest Lagrangian submanifold to components. Next we calculate $Lt(N)$ by definition knowing that it can not be greater than $\min(L_1,L_2)$.

We can compute $L_1$ using the algebraic restrictions $[\omega ^{i,j}]_{C_1}$ where the space $[Z^2 (\mathbb R^{2n})]_{C_1}$ is spanned only by the algebraic restriction to $C_1$ of the $2$-form $\theta_3$. For example for the class $(S_{\mu})^0$ we have $[\theta_1+c_2\theta_2+c_3\theta_3]_{C_1}=[c_3\theta_3]_{C_1}$ and thus $L_1=1$ when $c_3\ne0$ and $L_1=\infty$ when $c_3=0$.

We can compute $L_2$ using the algebraic restrictions $[\omega ^{i,j}]_{C_2}$ where the space $[Z^2 (\mathbb R^{2n})]_{C_2}$ is spanned only by the algebraic restrictions to $C_2$ of the $2$-forms $\theta_1, \,\theta_{4+k}$ for $k=1,2,\ldots,\theta_{\mu-1}$. For example for the class $(S_{\mu})^0$ we have $[ \theta_1+c_2\theta_2+c_3\theta_3]_{C_2}=[\theta_1]_{C_2}$ and thus $L_2= \mu-3$ if $\mu$ is an even number and $L_2= \frac{\mu-3}{2}$ if $\mu$ is an odd number.

$Lt(N)\leq1=\min(L_1,L_2)$ when $c_3\ne0$. Applying the definition of $Lt(N)$ we find the smooth Lagrangian submanifold $L$ described by the conditions: $\;p_i=0,\; i\in \{1,\ldots,n\}$ and we get $Lt(N)\geq t(N,L)=1$ in this case.

If $c_3=0$ then $Lt(N)\leq L_2=\min(L_1,L_2)$, but applying the definition of $Lt(N)$ we have $t(N,L)\leq2$ (resp. $t(N,L)\leq1$) for all Lagrangian submanifolds $L$. For $L$ described by the conditions: $\;q_i=0,\; i\in \{1,\ldots,n\}$ we get $Lt(N)= t(N,L)=2$ if $\mu$ is an even number and $Lt(N)= t(N,L)=1$ if $\mu$ is an odd number.

\end{rem}

\begin{rem} \label{s-relative}
We are not able to distinguish some classes $(S_{\mu})^{i}_2$ and $(S_{\mu})^{i}_r$ by the triples $(Lt(N), L_1, L_2)$ but we can do it using relative Lagrangian tangency orders.

\smallskip

\noindent We define $L_{2:1}=Lt[C_2:B_{1\pm}]=\max(Lt[C_2:B_{1+}],Lt[C_2:B_{1-}])$.

\smallskip

\noindent Since branches $B_{1+}$ and $B_{1-}$ are smooth curves then $Lt(B_{1+})=Lt(B_{1-})=\infty$ and $L_{2:1}=\max\limits _L (t(C_{2},L))$ where $L$ is a smooth Lagrangian submanifold containing $B_{1+}$ or $B_{1-}$.

\noindent Considering such smooth Lagrangian submanifolds  we obtain
$L_{2:1}=\frac{2}{\lambda_{\mu}}$ for the classes $(S_{\mu})^{i}_2$ and $L_{2:1}=\frac{\mu-3}{\lambda_{\mu}}$ for the classes $(S_{\mu})^{i}_r$ ($\lambda_{\mu}\!=1$ for even $\mu$ and $\lambda_{\mu}\!=2$ for odd $\mu$).

\end{rem}

\subsection{Geometric conditions for the classes $(S_{\mu})^{i,j}$}
\label{s-geom_cond}

The classes $(S_{\mu})^{i,j}$ can be distinguished geometrically, without using any local coordinate system.

Let $N\in (S_{\mu})$. Then $N$ is the union of two singular $1$-dimensional irreducible components diffeomorphic to $A_1$ and $A_{\mu-4}$ singularities. In local coordinates they have the form
\[\mathcal{C}_1=\{ x_1^2-x_2^2=0,\; x_{\geq 3}=0\},\]
\[\mathcal{C}_2=\{ x_1^2-x_3^{\mu-3}=0,\; x_2=x_{\geq 4}=0\}.\]
 Denote by $\ell _{1+}, \ell _{1-}$ the tangent lines at $0$ to the branches $\mathcal {B}_{1+}$ and $\mathcal {B}_{1-}$ respectively. These lines span a $2$-space $P_1$. Denote by $\ell_2$ the tangent line at $0$ to the component $\mathcal {C}_{2}$ and let $P_2$ be $2$-space tangent at $0$ to component $\mathcal {C}_2$. Define  line $\ell_3 = P_1\cap P_2$.
 The lines $\ell _{1\pm}, \ell _2$ span a  $3$-space $W=W(N)$. Equivalently $W$ is  the tangent space at $0$ to some (and then any) non-singular $3$-manifold containing $N$.
The classes $(S_{\mu})^{i,j}$ satisfy special conditions in terms of the restriction $\omega\vert_ W $, where $\omega $ is the symplectic form.
For $N=S_{\mu}=$(\ref{defs}) it is easy to calculate
\begin{equation}
\label{lines} \ell _{1\pm} = \Span (\partial /\partial x_1 \pm \partial /\partial x_2),
\ \ell _2 = \Span (\partial /\partial x_3), \ \ell _3 = \Span (\partial
/\partial x_1).\end{equation}

\begin{thm}
\label{geom-cond-s} A stratified submanifold $N\in (S_{\mu})$ of a
symplectic space $(\mathbb R^{2n}, \omega )$ belongs to the class
$(S_{\mu})^{i,j}$ if and only if the couple $(N, \omega )$ satisfies corresponding
conditions in the last column of Table \ref{tabs-geom}.
\end{thm}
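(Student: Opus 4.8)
The plan is to prove Theorem \ref{geom-cond-s} by reducing the intrinsic geometric conditions on $(N,\omega)$ to the algebraic restriction normal forms $[S_{\mu}]^{i,j}$ already classified in Theorem \ref{klas_s}, and then verifying, class by class, that the stated condition on $\omega\vert_W$ (and the auxiliary lines $\ell_{1\pm},\ell_2,\ell_3$ and planes $P_1,P_2$) is diffeomorphism-invariant and characterizes exactly the corresponding orbit. Since by Proposition \ref{main-reduction} and Proposition \ref{reduction} we may work on the minimal nonsingular submanifold $M\supseteq N$ whose tangent space at $0$ is $W$, the intrinsic data $\ell_{1\pm},\ell_2,\ell_3,P_1,P_2$ and the restriction $\omega\vert_W$ transform naturally under any local diffeomorphism preserving $N$. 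The key observation is that each geometric quantity is expressible through the values of a representative closed $2$-form on the distinguished lines: for instance $\omega(\ell_{1+},\ell_{1-})$, $\omega(\ell_{1\pm},\ell_2)$, $\omega(\ell_3,\ell_2)$, together with the order of vanishing of $\omega\vert_W$ at $0$, which by Proposition \ref{ii} equals the index of isotropy.

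First I would fix the model $N=S_{\mu}$ of (\ref{defs}) and use (\ref{lines}) to write $W=\Span(\partial/\partial x_1,\partial/\partial x_2,\partial/\partial x_3)$, so that $\omega\vert_W$ is determined by the three coefficients $c_1,c_2,c_3$ of the normal form restricted to $W$; note the basis $2$-forms $\theta_1=dx_1\wedge dx_3$, $\theta_2=dx_2\wedge dx_3$, $\theta_3=dx_1\wedge dx_2$ are precisely a basis of $\Lambda^2 W^*$, while all remaining $\theta_{4+k}$ vanish on $W$ to positive order. Thus the leading part $\omega\vert_W(0)$ is $c_1\theta_1+c_2\theta_2+c_3\theta_3$, and reading off which of $c_1,c_2,c_3$ vanish immediately distinguishes the top strata $(S_{\mu})^0$, $(S_{\mu})^k_2$, $(S_{\mu})^{1+k}_r$ from the isotropic classes $(S_{\mu})^{i,1}$, $(S_{\mu})^{3+k,k}$, $(S_{\mu})^{\mu}$ where $\omega\vert_W(0)=0$. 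Second, for the classes where $\omega\vert_W$ vanishes at $0$, I would invoke Proposition \ref{ii} to identify the index of isotropy $j$ with the order of vanishing of $\omega\vert_W$, and then extract finer invariants by evaluating on the planes $P_1,P_2$ and the line $\ell_3=P_1\cap P_2$; the vanishing or nonvanishing of $\omega\vert_{P_1}$, $\omega\vert_{P_2}$, and of the pairing of $\ell_3$ against transverse directions, pins down the remaining discrete data and separates, for example, $(S_{\mu})^{2+k,1}$ from $(S_{\mu})^{3+k,k}$.

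The logical structure of the argument is a necessity-and-sufficiency pair for each row. For necessity, I would take an arbitrary $N'\in(S_{\mu})^{i,j}$, apply Theorem \ref{thm A} to find a symplectomorphism carrying it to the model $N$ with the chosen representative $\omega^{i,j}$, and observe that the geometric quantities are symplectic (indeed diffeomorphism) invariants, so they agree with those computed for the model; a direct computation on $\omega^{i,j}\vert_W$ then yields the condition in the last column of Table \ref{tabs-geom}. For sufficiency, I would argue that the listed geometric conditions are mutually exclusive and jointly exhaustive over $(S_{\mu})$ (using Proposition \ref{def-classes-s}, which already asserts $(S_{\mu})$ is the disjoint union of the classes), so that satisfying the condition for a given $(i,j)$ forces membership in that class. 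The main obstacle I anticipate is the bookkeeping for the two families $(S_{\mu})^{i}_2$ and $(S_{\mu})^{i}_r$: these have the same index of isotropy $0$ and are not separated by $\omega\vert_W$ alone, so the geometric distinction must come from how the tangent data of the two components $\mathcal C_1,\mathcal C_2$ sit relative to $P_1,P_2$ — concretely, which component's tangent plane carries the nondegenerate part of $\omega$ — mirroring the relative tangency invariant $L_{2:1}$ of Remark \ref{s-relative}. Verifying that this configurational condition is genuinely intrinsic (independent of the nonsingular $M$ chosen) and that it coincides with the algebraic normal form split is where the real work lies; the remaining rows reduce to the routine evaluation of the explicit representatives $\omega^{i,j}$ on $W$, $P_1$, $P_2$, and $\ell_3$.
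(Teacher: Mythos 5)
Your proposal is correct and follows essentially the same route as the paper's own proof: the paper likewise observes that the conditions in Table \ref{tabs-geom} are pairwise disjoint, that each is invariant under diffeomorphisms acting on pairs $(\omega,N)$ and depends only on the algebraic restriction $[\omega]_N$ (via Proposition \ref{zero-at-zero}), and then verifies each row by direct computation with the representatives $\omega^{i,j}$ on $W$, $\ell_{1\pm}$, $\ell_2$, $\ell_3$ together with the Lagrangian tangency orders, with exhaustiveness of the partition (Proposition \ref{def-classes-s}) yielding the converse exactly as in your sufficiency step. The only slight imprecision is your appeal to Proposition \ref{ii}, which identifies the index of isotropy with the \emph{maximal} order of vanishing over all closed representatives of $[\omega]_N$, not with the order of vanishing of a fixed $\omega\vert_W$; this does not affect the argument since you evaluate on the chosen normal-form representatives.
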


\begin{center}
\begin{table}[h]
    \begin{small}
    \noindent
    \begin{tabular}{|p{1.2cm}|p{4.6cm}|p{6cm}|}
            \hline
    Class &  Normal form & Geometric conditions  \\ \hline

   $(S_{\mu})^0$ & & $ \omega|_{\ell_2+\ell_3} \ne 0$  \\ \cline{2-3}
         & $[S_{\mu}]^0_0: [\theta _1 + c_2\theta _2 + c_3\theta _3]_{S_{\mu}}$
   \newline $ c_3\ne 0$ & $\omega|_{\ell_{1+}+\ell_{1-}}\! \ne 0$ \; and  none of components is contained in a Lagrangian submanifold \\ \cline{2-3}
     & $[S_{\mu}]^0_1: [\theta _1 +c_2\theta _2 ]_{S_{\mu}}$ & $\omega |_{\ell_{1+}+\ell_{1-}}\!=0$ \; (so component $C_1$ is contained in a Lagrangian submanifold) \\ \hline \hline

  $(S_{\mu}\!)^{i}_2$ & &  $\omega|_{\ell_2+\ell_3} = 0$ but $\omega|_{\ell_{1\pm}+\ell_2} \ne 0$ \\ \hline
 $(S_{\mu}\!)^{k}_2$  & $[S_{\mu}]^{k}_2\!: [\theta _2\!+\! c_3\theta_3\!+\!c_{4+k}\theta_{4+k}]_{S_{\mu}}$
                                                    \newline $c_3\cdot c_{4+k} \ne 0$ for $1\leq k\leq\mu-5$ &  \ $\omega|_{\ell_{1+}+\ell_{1-}} \ne 0$ and $L_2=\frac{r+2k}{\lambda_{\mu}}$  \\ \cline{2-3}
 & $[S_{\mu}]^{k}_{2,L_1=\infty}\!: [\theta _2\!+\!c_{4+k}\theta_{4+k}]_{S_{\mu}}$
                                                    \newline $c_{4+k} \ne 0$ for $1\leq k\leq\mu-5$ & $\omega|_{\ell_{1+}+\ell_{1-}} = 0$ (so component $C_1$ is contained in a Lagrangian submanifold) and \ $L_2=\frac{r+2k}{\lambda_{\mu}}$  \\ \hline
 $(S_{\mu}\!)^{\mu-4}_2$ & $[S_{\mu}]^{\mu-4}_2: [\theta _2 +c_3\theta _3 ]_{S_{\mu}}$, $c_3\ne 0$ & $\omega|_{\ell_{1+}+\ell_{1-}} \ne 0$ and component $C_2$ is contained in a Lagrangian submanifold \\ \cline{2-3}

   & $[S_{\mu}]^{\mu-3}_2: [\theta _2  +c_{\mu} \theta _{\mu}]_{S_{\mu}}$ & $\omega|_{\ell_{1+}+\ell_{1-}} = 0$, both components are contained in  Lagrangian submanifolds\\ \hline\hline

$(S_{\mu}\!)^{i}_r$ & &  $\omega|_{\ell_2+\ell_3} = 0$ and $\omega|_{\ell_{1\pm}+\ell_2} = 0$ \newline but $\omega|_{\ell_{1+}+\ell_{1-}} \ne 0$ \\ \hline

$(S_{\mu}\!)^{1+k}_r$ & $[S_{\mu}]^{1+k}_r\!:$ \newline $[\theta _3\!+\!c_{4\!+k}\theta_{4\!+k}\!+\! c_{5\!+k}\theta_{5\!+k}]_{S_{\mu}}$
                                                    \newline $c_{4+k} \ne 0$ for $1\leq k\leq \mu-6$ &  none of components is contained in a Lagrangian submanifold and $L_2=\frac{r+2k}{\lambda_{\mu}}$     \\ \hline

$(S_{\mu}\!)^{\mu-4}_r$ & $[S_{\mu}]^{\mu-4}_r: [\theta _3 +c_{\mu-1} \theta _{\mu-1} ]_{S_{\mu}}$ \newline $c_{\mu-1} \ne 0$ & none of components is contained in a Lagrangian submanifold and $L_2=\frac{3r-4}{\lambda_{\mu}}$  \\ \cline{2-3}

& $[S_{\mu}]^{\mu-3}_r: [\theta _3 ]_{S_{\mu}}$ & component $C_2$ is contained in  Lagrangian submanifolds \\ \hline \hline

& & $\omega\vert_ W = 0$ and  component $C_1$ is contained in a Lagrangian submanifold \\ \hline

$(S_{\mu}\!)^{3,1}$ & $[S_{\mu}]^{3,1}: [c_4\theta _4 + \theta_{5}]_{S_{\mu}}$
                                                     &   $L_2=\frac{r+2}{\lambda_{\mu}}$  and $Lt(N)=\frac{r+2}{\lambda_{\mu}}$   \\ \hline
$(S_{\mu}\!)^{2+k,1}$ & $[S_{\mu}]^{2+k,1}: [\theta _4 + c_{4+k}\theta_{4+k}]_{S_{\mu}}$
                                                    \newline $c_{4+k} \ne 0$ and $2\leq k\leq \mu-5$ &   $L_2=\frac{r+2k}{\lambda_{\mu}}$  and $Lt(N)=\frac{r+2}{\lambda_{\mu}}$   \\ \cline{2-3}

& $[S_{\mu}]^{\mu-2,1}: [\theta _4 + c \theta _{\mu} ]_{S_{\mu}}$ \  & both components are contained in  Lagrangian submanifolds  and $Lt(N)=\frac{r+2}{\lambda_{\mu}}$\\ \hline

$(S_{\mu}\!)^{3+k,k}$ & $[S_{\mu}]^{3+k,k}: [\theta _{4+k} ]_{S_{\mu}}$ \newline $2\leq k\leq \mu-5$ & $L_2=\frac{r+2k}{\lambda_{\mu}}$  and $Lt(N)=\frac{r+2k}{\lambda_{\mu}}$   \\ \cline{2-3}

& $[S_{\mu}]^{\mu-1}: [ \theta _{\mu} ]_{S_{\mu}}$ & both components are contained in  Lagrangian submanifolds  and $Lt(N)=\frac{3r-2}{\lambda_{\mu}}$\\ \hline

$(S_{\mu}\!)^{\mu}$& $[S_{\mu}]^{\mu}: [ 0 ]_{S_{\mu}}$ & both components are contained in the same Lagrangian submanifold\\ \hline

    \end{tabular}

\smallskip

\caption{\small Geometric interpretation of singularity classes of $S_{\mu}$;
$W$ - the tangent space to a non-singular $3$-dimensional manifold  in $(\mathbb{R}^{2n\ge 4},\omega)$ containing $N\in (S_{\mu})$, $\lambda_{\mu}=1$ for even $\mu$ and $\lambda_{\mu}=2$ for odd $\mu$.}\label{tabs-geom}

\end{small}
\end{table}
\end{center}

\begin{proof}[Proof of Theorem \ref{geom-cond-s}]
The conditions on the pair $(\omega, N)$ in the last column of Table \ref{tabs-geom} are disjoint. It suffices to prove that these conditions  the row of $(S_{\mu})^{i,j}$, are satisfied for any $N\in (S_{\mu})^{i,j}$. This  is a corollary of the following claims:

\smallskip

\noindent 1. Each of the conditions in the last column of Table \ref{tabs-geom} is invariant with respect to the action of the group of diffeomorphisms in the space of pairs $(\omega , N)$;

\smallskip

\noindent 2. Each of these conditions depends only on the algebraic restriction $[\omega ]_N$;

\smallskip

\noindent 3. Take the simplest $2$-forms $\omega ^{i,j}$ representing the normal forms $[S_{\mu}]^{i,j}$ for algebraic restrictions.  The pair $(\omega = \omega ^{i,j}, S_{\mu})$ satisfies the condition in the last column of Table \ref{tabs-geom}, the row of $(S_{\mu})^{i,j}$.

\medskip

 The first statement is obvious,  the second one follows from  Lemma \ref{zero-at-zero}.

To prove the third statement we note that in the case $N = S_{\mu} = (\ref{defs})$ one has $W = \Span (\partial/\partial x_1, \partial / \partial x_2, \partial / \partial x_3)$ and  $\ell _{1\pm} = \Span (\partial /\partial x_1 \pm \partial /\partial x_2)$, $\ell _2 = \Span (\partial /\partial x_3)$, $\ell _3 = \Span (\partial /\partial x_1)$. By simply calculation and observation of Lagrangian tangency orders we obtain that the conditions in  the last column of Table \ref{tabs-geom}, the row of $(S_{\mu})^{i,j}$ are satisfied.

\end{proof}


\subsection{Proof of the theorem \ref{klas_s}}
\label{s-proof}

\begin{proof}

In our proof we use vector fields tangent to $N\in S_{\mu}$. Any vector fields tangent to $N\in S_{\mu}$ may be described as $V=g_1E+g_2\mathcal{H}$ where $E$ is the Euler vector field and $\mathcal{H}$ is the Hamiltonian vector field and $g_1,g_2$ are functions. It was shown in \cite{DT} (Prop.6.13) that the action of the Hamiltonian vector field on any 1-dimensional complete intersection is trivial.

The germ of a vector field tangent to $S_{\mu}$ of non trivial action on algebraic restriction of closed 2-forms to  $S_{\mu}$ may be described as a linear combination germs of the following vector fields:
$X_0\!=E,\, X_1\!=x_1E,\, X_2\!=x_2E,\, X_3\!=x_3E,\,  X_{l+2}\!=x_3^lE$ for $1\!<\!l\!<\!\mu\!-\!3$, where $E$ is the Euler vector field
$E=\sum_{i=1}^3 \lambda _i x_i \partial /\partial x_i$ and $\lambda _i$ are weights for $x_i$.

\begin{prop} \label{s-infinitesimal}

When $\mu$ is an even number then the infinitesimal action of germs of quasi-homogeneous vector
fields tangent to $N$ on the basis of the vector space of
algebraic restrictions of closed $2$-forms to $N$ is presented in
Table \ref{infini s}.
\begin{footnotesize}
\begin{table}[h]
\begin{center}
\begin{tabular}{|p{1.7cm}|c|c|c|c|c|}

 \hline

  $\mathcal L_{X_i} [\theta_j]$ & $[\theta_1]$   &   $[\theta_2]$ &   $[\theta_3]$ & $[\theta_4]$ & $[\theta_{4+k}]$ for $0\!<\!k\!<\!r$ \\ \hline 

  $X_0\!=\!E$  & $(r\!+\!2)[\theta_1]$& $(r\!+\!2)[\theta_2]$ & $2r[\theta_3]$ & $(2r\!+\!2)[\theta_4]$ & $(r\!+\!2(k\!+\!1)) [\theta_{4+k}]$ \\ \hline

  $X_1\!=\!x_1E$  & $ [0]$ & $-(r\!+\!2)[\theta_4]$ & $ [0]$ & $ [0]$ & $ [0]$ \\  \hline

   $X_2\!=\!x_2E$ & $-r[\theta_4]$ &  $[0]$  &$\frac{-3r^2}{2}[\theta_{\mu}]$ & $[0]$ & $ [0]$ \\   \hline

   $X_3\!=\!x_3E$  & $(r\!+\!4)[\theta_5]$& $ [0]$ & $r[\theta_4]$  & $[0]$ &  $(r\!+\!2(k\!+\!2))[\theta_{5+k}]$ \\       \hline

 $X_{l+2}\!=\!x_3^lE$ \newline \;\; $l\!<r\!-\!k$
  &  $(r\!+\!2l\!+\!2)[\theta_{4\!+\!l}]$ & $ [0]$ & $[0]$ & $[0]$ &  $(r\!+\!2(k\!+\!l\!+\!1))[\theta_{4\!+\!k\!+\!l}]$ \\       \hline

$X_{l+2}\!=\!x_3^lE$ \newline \;\; $r\!-\!k\!\leq\!l\!\leq\!r\!-\!1$
  &  $(r\!+\!2l\!+\!2)[\theta_{4\!+\!l}]$ &$ [0]$ & $[0]$ & $[0]$ &  $[0]$ \\       \hline

\end{tabular}
\end{center}

\caption{Infinitesimal actions on algebraic restrictions of closed
2-forms to  $S_{\mu}$. $E=(\mu-3)x_1 \partial /\partial x_1+ (\mu-3)x_2 \partial /\partial x_2+ 2x_3 \partial /\partial x_3$}\label{infini s}
\end{table}
\end{footnotesize}

\end{prop}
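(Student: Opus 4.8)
The plan is to compute the endomorphism $\mathcal L_{X_i}$ of $[Z^2(\mathbb R^{2n})]_{S_\mu}$ explicitly and read off the entries of Table \ref{infini s}. First I would note that, since the local flow of a vector field $V$ tangent to $N=S_\mu$ preserves $N$, it preserves $\mathcal A_0^2(N,\mathbb R^{2n})$; hence $\mathcal L_V$ preserves zero algebraic restrictions, and as $\mathcal L_V$ also carries closed forms to (exact, hence) closed forms, it descends to a well-defined linear map on $[Z^2(\mathbb R^{2n})]_{S_\mu}$. It therefore suffices to apply each $\mathcal L_{X_i}$ to the basis $\theta_1,\theta_2,\theta_3,\theta_4=\sigma_1-\sigma_2,\theta_{4+k}$ of Proposition \ref{s_baza} and to rewrite the outcome in this basis modulo $\mathcal A_0^2(N,\mathbb R^{2n})$.

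Every $\theta_j$ is a closed quasi-homogeneous $2$-form, which makes the calculation systematic. For the Euler field alone, $\mathcal L_E\theta_j=(\deg\theta_j)\,\theta_j$, where $\deg$ is the weighted degree ($w(x_1)=w(x_2)=r$, $w(x_3)=2$, each $dx_i$ carrying the weight of $x_i$); since $\deg\theta_1=\deg\theta_2=r+2$, $\deg\theta_3=2r$, $\deg\theta_4=2r+2$ and $\deg\theta_{4+k}=r+2(k+1)$, this reproduces the row $X_0=E$ at once. For $X_i=g_iE$ with $g_i\in\{x_1,x_2,x_3,x_3^l\}$, Cartan's formula together with $d\theta_j=0$ gives
\begin{equation}\label{s-leibniz}
\mathcal L_{g_iE}\,\theta_j=g_i\,(\deg\theta_j)\,\theta_j+dg_i\wedge\iota_E\theta_j,
\end{equation}
so I would first record the five interior products $\iota_E\theta_j$ (obtained from $\iota_E(dx_a\wedge dx_b)=w_ax_a\,dx_b-w_bx_b\,dx_a$), then wedge with $dg_i$ and collect monomials.

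The reduction to the basis rests on two facts. Because $N$ is quasi-homogeneous the algebraic restrictions are graded by weighted degree, and the degrees actually occurring in $[Z^2]_{S_\mu}$ form the finite set $D=\{r+2,\,2r,\,2r+2\}\cup\{r+2(k+1):1\le k\le r-1\}$; any homogeneous $2$-form whose weighted degree lies outside $D$ has zero algebraic restriction. Since the output of (\ref{s-leibniz}) is homogeneous of degree $\deg g_i+\deg\theta_j$, this single observation kills the off-diagonal $[0]$ entries (the actions of $X_1,X_2$ on $\theta_4$ and on $\theta_{4+k}$, and of $X_{l+2}$ on $\theta_3$ and $\theta_4$), and it also produces the split into the two $X_{l+2}$-rows: when $4+k+l>\mu$, i.e. $l\ge r-k$, the target $\theta_{4+k+l}$ has degree outside $D$ and so vanishes. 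The pure \emph{shift} entries — $X_3$ and $X_{l+2}$ acting on $\theta_1$ and on $\theta_{4+k}$ with $4+k+l\le\mu$ — come directly from (\ref{s-leibniz}), since there $dg_i\wedge\iota_E\theta_j$ is again a multiple of some $x_3^m\,dx_1\wedge dx_3$. The remaining \emph{resonant} entries, where $\deg g_i+\deg\theta_j$ equals the degree of an admissible basis form, are exactly those needing the explicit relations of Table \ref{tabs1}: rows 1--4 give the vanishing of $[x_2\,dx_2\wedge dx_3]_N$, $[x_3\,dx_2\wedge dx_3]_N$, $[x_1\,dx_1\wedge dx_2]_N$ and $[x_1\,dx_1\wedge dx_3]_N$, which account for the resonant $[0]$ entries ($X_2,X_3$ on $\theta_2$, $X_1$ on $\theta_1$ and on $\theta_3$), while row 5 together with $\sigma_1-\sigma_2=\theta_4$ rewrites $x_2\,dx_1\wedge dx_3$ as $-\sigma_1$ and yields the multiples of $\theta_4$ recorded for $X_1$ on $\theta_2$, $X_2$ on $\theta_1$ and $X_3$ on $\theta_3$. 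Here Proposition \ref{d-wedge} is used repeatedly to multiply a vanishing algebraic restriction by a function.

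The step I expect to be the main obstacle is the one genuinely nonzero ``long-range'' entry $\mathcal L_{x_2E}\theta_3=-\tfrac{3r^2}{2}[\theta_\mu]$. Here (\ref{s-leibniz}) gives $\mathcal L_{x_2E}\theta_3=3r\,x_2\,dx_1\wedge dx_2$, and to identify this I must invoke row 6 of Table \ref{tabs1}, namely $[2x_2\,dx_1\wedge dx_2]_N=(\mu-3)[x_3^{\mu-4}\,dx_3\wedge dx_1]_N$, which is itself a consequence of the defining relation (\ref{s2}); it gives $[x_2\,dx_1\wedge dx_2]_N=-\tfrac{r}{2}[\theta_\mu]_N$ and hence the coefficient $-\tfrac{3r^2}{2}$. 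Beyond correctly deriving and applying this one relation, the only real difficulty is the sign bookkeeping — signs from the interior products and from reorderings such as $dx_3\wedge dx_1=-\,dx_1\wedge dx_3$ — after which the entire table follows from (\ref{s-leibniz}), the grading, and Table \ref{tabs1}.
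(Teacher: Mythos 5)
Your proposal is correct and follows essentially the same route as the paper, which states the table as a direct computation (after reducing, as you do, to the fields $g\,E$ via the triviality of the Hamiltonian action): your formula $\mathcal L_{gE}\theta_j=g(\deg\theta_j)\theta_j+dg\wedge\iota_E\theta_j$, the quasi-homogeneous grading argument for degrees outside $D$, and the relations of Table \ref{tabs1} (in particular row 5 with $\theta_4=\sigma_1-\sigma_2$ for the $\theta_4$-entries, and row 6 for the coefficient $-\tfrac{3r^2}{2}$ of $[\theta_\mu]$) reproduce exactly the entries of Table \ref{infini s}, as I spot-checked on $\mathcal L_{x_1E}[\theta_2]=-(r+2)[\theta_4]$, $\mathcal L_{x_3E}[\theta_3]=r[\theta_4]$ and $\mathcal L_{x_2E}[\theta_3]=-\tfrac{3r^2}{2}[\theta_\mu]$. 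The only cosmetic point is that your list of resonant zero entries should also include $X_{l+2}$ on $\theta_2$ for $l\ge 2$ (degree $r+2+2l\in D$, killed by row 2 of Table \ref{tabs1} multiplied by $x_3^{l-1}$, which your appeal to Proposition \ref{d-wedge} already covers).
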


\begin{rem} When $\mu$ is an odd number we obtain a very similar table , we only have  to divide by $2$ all coefficients in  Table \ref{infini s}. The next part of the proof is written for even $\mu$. In the case of odd $\mu$ we repeat the same scheme.
\end{rem}

\medskip

Let $\mathcal{A}=[ \sum_{l=1}^{\mu}c_{l} \theta_{l}]_{S_{\mu}}$
be the algebraic restriction of a symplectic form $\omega$.

\medskip

The first statement of Theorem \ref{klas_s} follows from the following lemmas.

\begin{lem}
\label{slem1} If \;$ c_1\ne 0$\; then the algebraic restriction  $\mathcal{A}=[ \sum_{l=1}^{\mu}c_{l} \theta_{l}]_{S_{\mu}}$ can be reduced by a symmetry of $S_{\mu}$ to an algebraic restriction $[\theta_1+ \widetilde{c}_2\theta_2+\widetilde{c}_3 \theta_3]_{S_{\mu}}$.
\end{lem}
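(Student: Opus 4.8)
The plan is to use the infinitesimal action recorded in Table \ref{infini s} to kill, one at a time, the coefficients $c_4, c_5, \dots, c_\mu$ in $\mathcal{A} = [\sum_{l=1}^{\mu} c_l \theta_l]_{S_\mu}$, and then to normalize the leading coefficient $c_1$ to $1$ by an application of the Euler field $X_0 = E$. Throughout I will exploit that if $V$ is a germ of a vector field tangent to $S_\mu$, then $\exp(V)$ is a symmetry of $S_\mu$, and its action on algebraic restrictions of closed $2$-forms is governed infinitesimally by the Lie derivatives $\mathcal L_{X_i}$ in the table. The key structural feature I would emphasize first is that the fields $X_1, X_2, X_3, X_{l+2}$ all act by strictly raising the index of the basis $2$-forms $\theta_j$ (they send $[\theta_1]$ or $[\theta_{4+k}]$ to higher-index $\theta$'s, or to $[0]$), so the action is \emph{triangular} with respect to the weight filtration. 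This triangularity is what makes the step-by-step elimination work without the later corrections re-polluting the coefficients already set to zero.

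\textbf{Eliminating the higher terms.} First I would normalize by applying $\exp(sX_0)$ so that, since $X_0$ acts diagonally with distinct eigenvalues $(r+2)$ on $\theta_1$, this both confirms $c_1\ne 0$ is preserved and lets me later scale $c_1$ to $1$. Next, observe from the table that $X_3 = x_3 E$ sends $[\theta_1] \mapsto (r+4)[\theta_5]$ and more generally the fields $X_{l+2} = x_3^l E$ send $[\theta_1] \mapsto (r+2l+2)[\theta_{4+l}]$ for $1 \le l \le r-1$. Since $c_1 \ne 0$, the vector field $c_1 X_{l+2}$ produces a nonzero multiple of $[\theta_{4+l}]$, and I can solve successively for the flow parameters to cancel $c_5, c_6, \dots, c_\mu = c_{4+(r-1)+1}$; here I must proceed from the lowest index $\theta_5$ upward, using $X_3$ for $\theta_5$ and each $X_{l+2}$ for $\theta_{4+l}$, relying on the triangular structure so that each elimination only affects strictly-higher-index coefficients already scheduled for removal. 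After these flows, the algebraic restriction has the form $[c_1\theta_1 + c_2\theta_2 + c_3\theta_3 + c_4\theta_4]_{S_\mu}$ with $c_1 \ne 0$. The remaining $\theta_4$ term is killed by $X_1 = x_1 E$, which sends $[\theta_2]\mapsto -(r+2)[\theta_4]$, or by $X_2 = x_2 E$, which sends $[\theta_1]\mapsto -r[\theta_4]$; since $c_1\ne 0$, the field $X_2$ (or a combination) generates a nonzero $[\theta_4]$ contribution, so I solve for the parameter eliminating $c_4$, at the cost of introducing only a $[\theta_\mu]$ term (via the $\frac{-3r^2}{2}[\theta_\mu]$ entry for $X_2$ acting on $\theta_3$), which I then remove with a final $X_{l+2}$ flow.

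\textbf{Final normalization.} Having reduced to $[c_1\theta_1 + \tilde c_2 \theta_2 + \tilde c_3 \theta_3]_{S_\mu}$ with $c_1 \ne 0$, I scale by the diagonal action of $X_0 = E$ (or simply by the symmetry $x_i \mapsto t^{\lambda_i} x_i$, which scales $\theta_1, \theta_2, \theta_3$ by definite weights) to arrange $c_1 = 1$, yielding $[\theta_1 + \tilde c_2 \theta_2 + \tilde c_3 \theta_3]_{S_\mu}$ as claimed. I would verify that the weight of $\theta_1$ under $E$ matches that of $\theta_2$ (both $r+2$) while $\theta_3$ has weight $2r$; this discrepancy means scaling to set $c_1=1$ also rescales $c_2$ and $c_3$, which is harmless since they are free parameters at this stage.

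\textbf{The main obstacle.} The delicate point is the bookkeeping in the elimination order: I must confirm that each normalizing flow only creates new contributions in basis directions of strictly higher weight than the coefficient currently being eliminated, so that no already-cancelled coefficient is resurrected. The entries in Table \ref{infini s} where a field sends a low-index form to $[\theta_\mu]$ or $[\theta_{5+k}]$ (e.g. the $X_2$ action on $\theta_3$) are precisely the potential source of trouble, and the argument hinges on the fact that $\theta_\mu$ is the top of the filtration, so any such spillover lands in the very last coordinate and can be removed last by $X_{l+2}$ with $l = r-1$. Making this triangularity fully rigorous — ideally by organizing the elimination as a single induction on the weight and invoking the quasi-homogeneity of $S_\mu$ to guarantee the relevant coefficients in the table are nonzero (they are the displayed $(r+2l+2)$, etc., all nonzero since $r = \mu - 3 \ge 3$) — is where the real content of the proof lies.
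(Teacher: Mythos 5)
Your proof is correct, and it rests on the same two pillars as the paper's argument --- the infinitesimal action table of Proposition \ref{s-infinitesimal} and the final quasi-homogeneous rescaling $(x_1,x_2,x_3)\mapsto(c_1^{-r/(r+2)}x_1,\,c_1^{-r/(r+2)}x_2,\,c_1^{-2/(r+2)}x_3)$ --- but you integrate the infinitesimal data by a genuinely different route. The paper uses the Moser homotopy method: it joins $\mathcal{A}$ to $[c_1\theta_1+c_2\theta_2+c_3\theta_3]_{S_{\mu}}$ by the path $\mathcal{B}_t$ carrying the tail $(1-t)\sum_{l=4}^{\mu}c_l\theta_l$, and realizes the whole reduction by the flow of a single time-dependent field $V_t=\sum_k b_k(t)X_k$, the coefficients $b_k(t)$ being obtained by solving the triangular linear system (\ref{proofslem13}), which is solvable exactly because the diagonal entries $(r+2k)c_1$ are nonzero. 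You instead compose finitely many autonomous flows: $\exp(sX_{l+2})$ kills $c_{4+l}$ from the bottom up via the entries $(r+2l+2)c_1[\theta_{4+l}]$, then $X_1$ or $X_2$ removes $c_4$, and the $\theta_{\mu}$ spillover $-\frac{3r^2}{2}c_3$ from $X_2$ is cleaned last by $X_{r+1}$; this is legitimate because each $\mathcal{L}_{X_i}$ ($i\geq 1$) is strictly index-raising, hence nilpotent on the finite-dimensional space of algebraic restrictions, so each flow acts by a finite triangular exponential and your scheduling argument closes the loop. The two proofs are equivalent in substance --- your triangularity is precisely what makes the paper's matrix invertible in the relevant columns --- but yours is more explicit and avoids the ODE bookkeeping, at the cost of the elimination-order analysis you rightly identify as the crux. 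Two small blemishes, neither fatal: $X_0$ does not have distinct eigenvalues ($\theta_1$ and $\theta_2$ both carry weight $r+2$), though you correct this yourself and never actually use distinctness; and for $c_1<0$ the final rescaling needs $|c_1|$ combined with a sign symmetry such as $(x_1,x_2,x_3)\mapsto(-x_1,x_2,x_3)$, a point the paper also elides in this lemma while handling it explicitly in Lemmas \ref{slem3} and \ref{slem4}. Note also that the table you cite is the even-$\mu$ one; for odd $\mu$ all coefficients are halved, which changes nothing in your scheme.
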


\begin{proof}[Proof of Lemma \ref{slem1}]

We use the homotopy method to prove that  $\mathcal{A}$ is diffeomorphic to $[\theta_1+ \widetilde{c}_2\theta_2+\widetilde{c}_3 \theta_3]_{S_{\mu}}$.
Let $\mathcal{B}_t=[c_1 \theta_1+c_2 \theta_2+c_3 \theta_3+(1-t)\sum_{l=4}^{\mu}c_l \theta_l]_{S_{\mu}}$
\; for $t \in[0;1]$. Then $\mathcal{B}_0=\mathcal{A}$\; and \;$\mathcal{B}_1=[c_1 \theta_1+c_2 \theta_2+c_3 \theta_3]_{S_{\mu}}$.
 We prove that there exists a family $\Phi_t \in Symm(S_{\mu}),\;t\in [0;1]$ such that
 \begin{equation}
\label{proofslem11}   \Phi_t^*\mathcal{B}_t=\mathcal{B}_0,\;\Phi_0=id.
\end{equation}
Let $V_t$ be a vector field defined by $\frac{d \Phi_t}{dt}=V_t(\Phi_t)$. Then differentiating (\ref{proofslem11}) we obtain
  \begin{equation}
\label{proofslem12}   \mathcal L_{V_t} \mathcal{B}_t=[\sum_{l=4}^{\mu}c_l \theta_l].
\end{equation}
We are looking for $V_t$ in the form $V_t=\sum_{k=1}^{\mu-2}b_k(t) X_k$   where $b_k(t)$ for $k=1,\ldots,\mu-2$ are smooth functions $b_k:[0;1]\rightarrow \mathbb{R}$. Then by Proposition  \ref{s-infinitesimal} equation
(\ref{proofslem12}) has a form

\setlength{\arraycolsep}{0.1mm}
\begin{small}
\begin{equation}  \label{proofslem13}
\left[ \begin{array}{ccccccc}
\!-(r\!+\!2)c_2 & -rc_1 & rc_3 & 0 & 0 & 0 & 0 \\
0 & 0 & (r+4)c_1 & 0 & 0 & 0 & 0 \\
0 & 0 & (1\!-\!t)(r\!+\!6)c_5 & (r\!+\!6)c_1  & 0 & 0 & 0 \\
\vdots & \vdots & \vdots &  \ddots & 0 & 0 & 0 \\
0 & 0 & (1\!-\!t)(r\!+\!2k)c_{k\!+\!2} & \cdots & (r\!+\!2k)c_1 & 0 & 0 \\
0 & 0 & \vdots & \cdots &\vdots &\ddots & 0 \\
0 & -\frac{3r^2}{2}c_3 & 3(1\!-\!t)rc_{\mu-1} & \cdots & 3(1\!-\!t)rc_{\mu\!-\!k\!+\!1} & \cdots & 3rc_1
\end{array} \right]
\left[ \begin{array}{c} b_1(t) \\ b_2(t) \\ b_3(t) \\\vdots \\ b_{k+1}(t)\\ \vdots \\ b_{\mu-2}(t)  \end{array} \right] =
\left[ \begin{array}{c} c_4 \\ c_5 \\ c_6 \\ \vdots \\ c_{k+3} \\ \vdots \\  c_{\mu}  \end{array}  \right]
\end{equation}
\end{small}

\noindent If \;$c_1\ne 0$ we can solve (\ref{proofslem13}).

\noindent We obtain $b_3(t)=\frac{c_5}{c_1(r+4)}$ and we may choose any $b_1$.

\noindent Other functions $b_k$ are determined by that choice.

 \noindent Let $b_1(t)=0$. This imply $b_2(t)=\frac{rc_3b_3(t)-c_4}{rc_1}=\frac{c_3c_5}{(r+4)c_1^2}-\frac{c_4}{rc_1}$.

  \noindent Next $b_4(t)=\frac{c_6}{(r+6)c_1}-\frac{(1-t)}{c_1}c_5b_3(t)$,  \; $b_5(t)=\frac{c_7}{(r+8)c_1}-\frac{(1-t)}{c_1}(c_6b_3(t)+c_5b_4(t))$,

  \noindent  consequently $b_{k+1}(t)=\frac{c_{k+3}}{(r+2k)c_1}-\frac{(1-t)}{c_1}\sum_{l=3}^k c_{k+5-l}b_l(t)$ for $k<\mu-3$,

  \noindent and eventually $b_{\mu-2}(t)=\frac{c_{\mu}}{3rc_1}+ \frac{r}{2c_1}c_3b_2(t)-\frac{(1-t)}{c_1}\sum_{l=3}^{\mu-3} c_{\mu+2-l}b_l(t)$.

\smallskip

  Diffeomorphisms $\Phi_t$ may be obtained as a flow of vector field $V_t$.
The family $\Phi_t$ preserves $S_{\mu}$, because $V_t$ is tangent to $S_{\mu}$ and $\Phi_t^*\mathcal{B}_t=\mathcal{A}$.
Using the homotopy arguments we have $\mathcal{A}$ diffeomorphic to $ \mathcal{B}_1=[c_1 \theta_1+c_2 \theta_2+c_3 \theta_3]_{S_{\mu}}$.
By the condition $c_1\ne 0$ we have a diffeomorphism $\Psi \in Symm(S_{\mu})$ of the form
  \begin{equation}
\label{proofslem14}
\Psi:\,(x_1,x_2,x_3)\mapsto (c_1^{-\frac{r}{r+2}} x_1,c_1^{-\frac{r}{r+2}} x_2,c_1^{-\frac{2}{r+2}} x_3)
\end{equation}
and we obtain
\[ \Psi^*(\mathcal{B}_1)=[ \theta_1+ \frac{c_2}{c_1}\theta_2+c_3 c_1^{-\frac{2r}{r+2}} \theta_3]_{S_{\mu}} =
 [ \theta_1+ \widetilde{c}_2 \theta_2+\widetilde{c}_3 \theta_3]_{S_{\mu}}.\]
\end{proof}

\begin{lem}
\label{slem2} If \;$c_1= 0$\;and $c_2 \neq 0$ and $c_{4+k} \ne 0$ and $c_l=0$ for $5\leq l< 4+k$  then the algebraic restriction $\mathcal{A}=[ \sum_{l=1}^{\mu}c_{l} \theta_{l}]_{S_{\mu}}$ can be reduced by a symmetry of $S_{\mu}$ to an algebraic restriction $[\theta_2+ \widetilde{c}_3\theta_3+ +\widetilde{c}_{4+k} \theta_{4+k}]_{S_{\mu}}$.
\end{lem}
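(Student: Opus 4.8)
The plan is to repeat the homotopy argument of Lemma~\ref{slem1}, but with the pivots $\theta_2$ and $\theta_{4+k}$ replacing $\theta_1$: since now $c_1=0$, the term $\theta_1$ is gone, yet the hypotheses $c_2\neq0$ and $c_{4+k}\neq0$ furnish two coefficients whose infinitesimal action (Table~\ref{infini s}) generates exactly the forms that must be eliminated, namely $\theta_4$ and the tail $\theta_{5+k},\dots,\theta_{\mu}$. First I would introduce the family
\[
\mathcal{B}_t=\Bigl[c_2\theta_2+c_3\theta_3+(1-t)c_4\theta_4+c_{4+k}\theta_{4+k}+(1-t)\sum_{l=5+k}^{\mu}c_l\theta_l\Bigr]_{S_{\mu}},\qquad t\in[0,1].
\]
By the hypotheses $c_1=0$ and $c_l=0$ for $5\le l<4+k$ one has $\mathcal{B}_0=\mathcal{A}$, while $\mathcal{B}_1=[c_2\theta_2+c_3\theta_3+c_{4+k}\theta_{4+k}]_{S_{\mu}}$. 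Exactly as in Lemma~\ref{slem1}, it is enough to build a family $\Phi_t\in Symm(S_{\mu})$ with $\Phi_t^*\mathcal{B}_t=\mathcal{B}_0$ and $\Phi_0=id$; if $V_t$ generates $\Phi_t$ this amounts to solving, for each $t$,
\[
\mathcal{L}_{V_t}\mathcal{B}_t=c_4\theta_4+\sum_{l=5+k}^{\mu}c_l\theta_l.
\]

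Next I would search for $V_t$ in the span of $X_1$ and $X_3,X_4,\dots,X_{r-k+1}$, discarding $X_0$ (whose action alone would create the forbidden $\theta_2,\theta_3$ components) and $X_2$ (which is not needed here). Reading off Table~\ref{infini s}, the field $X_1$ touches $\mathcal{B}_t$ only through $\theta_2$, contributing $-(r+2)c_2\theta_4$, while $X_{j+2}$ sends $\theta_{4+k}\mapsto(r+2(k+j+1))\theta_{4+k+j}$ for $1\le j\le r-k-1$ (with $X_3$ the case $j=1$), so that $\theta_{4+k+j}$ runs through the whole tail $\theta_{5+k},\dots,\theta_{\mu}$ as $j$ increases---here $\theta_{\mu}=\theta_{4+(r-1)}$ is reached because $k\le\mu-5$ keeps $r-k-1\ge1$. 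Every remaining contribution either vanishes (thanks to $c_1=0$) or again lands in $\{\theta_4,\theta_{5+k},\dots,\theta_{\mu}\}$, and each coupling coming from a $(1-t)$-scaled summand feeds a strictly higher-order form. Thus the homotopy equation becomes an affine system in the $b_i(t)$ whose matrix is lower triangular, with diagonal entries proportional to $c_{4+k}$ (for the equations carrying $\theta_{5+k},\dots,\theta_{\mu}$) and to $c_2$ (for the equation carrying $\theta_4$); since $c_2\neq0$ and $c_{4+k}\neq0$ it can be solved, just as system~(\ref{proofslem13}) was. Integrating $V_t$ yields $\Phi_t\in Symm(S_{\mu})$, whence $\mathcal{A}=\mathcal{B}_0$ is diffeomorphic to $\mathcal{B}_1=[c_2\theta_2+c_3\theta_3+c_{4+k}\theta_{4+k}]_{S_{\mu}}$. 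When $k=\mu-4$ the tail is empty and only $\theta_4$ has to be removed, using $X_1$ alone.

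Finally, because $c_2\neq0$, I would compose with a quasi-homogeneous scaling symmetry $\Psi$ of $S_{\mu}$ of the same shape as~(\ref{proofslem14}), chosen to normalise the coefficient of $\theta_2$ to $1$; this delivers $[\theta_2+\widetilde c_3\theta_3+\widetilde c_{4+k}\theta_{4+k}]_{S_{\mu}}$, as required. The step I expect to be most delicate is the bookkeeping in the middle paragraph: one must verify that the chosen vector fields produce precisely $\theta_4$ together with the entire tail $\theta_{5+k},\dots,\theta_{\mu}$, and nothing of order below $\theta_{5+k}$ apart from $\theta_4$, so that the associated system is genuinely triangular with non-vanishing diagonal. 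The decisive facts are that $\theta_{4+k}$ propagates all the way up to $\theta_{\mu}$ under the $X_{j+2}$ (valid exactly because $k\le\mu-5$) and that $\theta_4$ is freely removable via $X_1$ on account of $c_2\neq0$.
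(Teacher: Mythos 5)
Your proposal reproduces the paper's proof essentially verbatim: the same homotopy family $\mathcal{B}_t$ interpolating away $c_4\theta_4$ and the tail $\sum_{l=5+k}^{\mu}c_l\theta_l$, the same Lie-derivative equation solved via Table \ref{infini s} as a triangular linear system whose diagonal entries are proportional to $c_{4+k}$ (tail equations) and $c_2$ (the $\theta_4$ equation, via $\mathcal L_{X_1}[\theta_2]=-(r+2)[\theta_4]$), followed by the same quasi-homogeneous rescaling normalising the coefficient of $\theta_2$. The only, harmless, deviation is that you discard $X_2$ from the ansatz, which the paper formally includes (the $-\frac{3r^2}{2}c_3$ entry in its last row) but never needs, since the $\theta_{\mu}$ equation is already solvable through $c_{4+k}\neq 0$.
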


\begin{proof}[Proof of Lemma \ref{slem2}]
We use similar methods as above to prove that lemma. \par \noindent If $c_1=0$ and $c_2 \neq 0$ and $c_{4+k} \ne 0$ and $c_l=0$ for $5\leq l< 4+k$ then
$\mathcal{A}= [c_2 \theta_2+c_3 \theta_3+c_4 \theta_4+\sum_{l=4+k}^{\mu}c_{l} \theta_{l}]_{S_{\mu}}$. \par \noindent  Let $\mathcal{B}_t=[c_2 \theta_2+c_3 \theta_3+(1-t)c_4 \theta_4+c_{4+k} \theta_{4+k}+(1-t)\sum_{l=5+k}^{\mu}c_{l} \theta_{l}]_{S_{\mu}}$ \; for $t \in[0;1]$. Then $\mathcal{B}_0=\mathcal{A}$\; and
\;$\mathcal{B}_1=[c_2 \theta_2+c_3 \theta_3+c_{4+k} \theta_{4+k}]_{S_{\mu}}$.
 We prove that there exists a family $\Phi_t \in Symm(S_{\mu}),\;t\in [0;1]$ such that
 \begin{equation}
\label{proofslem21}   \Phi_t^*\mathcal{B}_t=\mathcal{B}_0,\;\Phi_0=id.
\end{equation}
Let $V_t$ be a vector field defined by $\frac{d \Phi_t}{dt}=V_t(\Phi_t)$. Then differentiating (\ref{proofslem21}) we obtain
  \begin{equation}
\label{proofslem22}   \mathcal L_{V_t} \mathcal{B}_t=[c_4 \theta_4+\sum_{l=5+k}^{\mu}c_{l} \theta_{l}]_{S_{\mu}}.
\end{equation}
We are looking for $V_t$ in the form $V_t=\sum_{k=1}^{\mu-2}b_k(t) X_k$   where $b_k(t)$ are smooth functions $b_k:[0;1]\rightarrow \mathbb{R}$ for $k=1,\ldots,\mu-2$.  Then by Proposition  \ref{s-infinitesimal} equation (\ref{proofslem22})
has a form

\setlength{\arraycolsep}{0.3mm}
\begin{small}
\begin{equation}  \label{proofslem23}
\left[ \begin{array}{ccccccc}
\!-(r\!+\!2)c_2 & 0 & rc_3 & 0\  & \cdots & \cdots & 0 \\
0 & 0 & (r\!+\!2k+4)c_{k\!+\!4} & 0 \  & \cdots & \cdots & 0 \\
0 & 0 & \vdots & \ddots & 0 &\cdots & 0 \\
0 & -\frac{3r^2}{2}c_3 & 3(1-t)rc_{\mu-1} & \cdots  & 3rc_{k\!+\!4} & 0 \cdots & 0
\end{array} \right]
\left[ \begin{array}{c} b_1(t) \\ b_2(t) \\ b_3(t) \\ \vdots  \\ b_{\mu-2}(t)  \end{array} \right] =
\left[ \begin{array}{c} c_4 
            \\ c_{k+5} \\ \vdots \\  c_{\mu}  \end{array}  \right]
\end{equation}
\end{small}

\noindent If \;$ c_2\ne 0$ we can solve (\ref{proofslem23}).
  Diffeomorphisms $\Phi_t$ may be obtained as a flow of vector field $V_t$.
The family $\Phi_t$ preserves $S_{\mu}$, because $V_t$ is tangent to $S_{\mu}$ and $\Phi_t^*\mathcal{B}_t=\mathcal{A}$.
Using the homotopy arguments we have that $\mathcal{A}$ is diffeomorphic to $ \mathcal{B}_1=[c_2 \theta_2+c_3 \theta_3+c_{4+k} \theta_{4+k}]_{S_{\mu}}$.
By the condition $c_2\ne 0$ we have a diffeomorphism $\Psi \in Symm(S_{\mu})$ of the form
  \begin{equation}
\label{proofslem24}
\Psi:\,(x_1,x_2,x_3)\mapsto (c_2^{-\frac{r}{r+2}} x_1,c_2^{-\frac{r}{r+2}} x_2,c_2^{-\frac{2}{r+2}} x_3)
\end{equation}
and we obtain
\[ \Psi^*(\mathcal{B}_1)=[\theta_2+ c_3 c_2^{-\frac{2r}{r+2}}\theta_3+c_{4+k} c_2^{-(1+\frac{2k}{r+2})} \theta_{4+k}]_{S_{\mu}} =
 [ \theta_2+ \widetilde{c}_3\theta_3+\widetilde{c}_{4+k} \theta_{4+k}]_{S_{\mu}}.\]

 \end{proof}

\begin{lem}
\label{slem2a} If \;$c_1= 0$\; and \;$c_2\neq 0$\; and $c_{4+k}=0$ for $k\in \{1,...,\mu-5\}$ then the algebraic restriction $\mathcal{A}=[ \sum_{l=1}^{\mu}c_{l} \theta_{l}]_{S_{\mu}}$ can be reduced by a symmetry of $S_{\mu}$ to an algebraic restriction $[\theta_2+ \widetilde{c}_3\theta_3+ \widetilde{c}_{\mu} \theta_{\mu}]_{S_{\mu}}$ where $\widetilde{c}_3 \widetilde{c}_{\mu}=0$ .
\end{lem}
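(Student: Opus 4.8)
The plan is to follow the homotopy method already used in Lemmas \ref{slem1} and \ref{slem2}, driven by the infinitesimal actions recorded in Table \ref{infini s} (Proposition \ref{s-infinitesimal}). Under the present hypotheses ($c_1=0$, $c_2\ne0$, and $c_{4+k}=0$ for $1\le k\le\mu-5$, i.e. $c_5=\cdots=c_{\mu-1}=0$) the algebraic restriction collapses to $\mathcal{A}=[c_2\theta_2+c_3\theta_3+c_4\theta_4+c_\mu\theta_\mu]_{S_\mu}$, so only four coefficients survive. The goal is to kill $\theta_4$ outright, to trade $\theta_\mu$ against $\theta_3$, and finally to normalize $c_2$ to $1$ by a quasi-homogeneous scaling $\Psi$.

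First I would remove the $\theta_4$-term. Table \ref{infini s} shows that $X_1=x_1E$ annihilates every generator except $\theta_2$, on which $\mathcal{L}_{X_1}[\theta_2]=-(r+2)[\theta_4]$. Hence, for the homotopy $\mathcal{B}_t=[c_2\theta_2+c_3\theta_3+(1-t)c_4\theta_4+c_\mu\theta_\mu]_{S_\mu}$, the homological equation $\mathcal{L}_{V_t}\mathcal{B}_t=[c_4\theta_4]_{S_\mu}$ is solved by the constant field $V_t=b_1X_1$ with $b_1=-c_4/((r+2)c_2)$, which is legitimate precisely because $c_2\ne0$. Integrating $V_t$ to a flow $\Phi_t\in Symm(S_{\mu})$ shows that $\mathcal{A}$ is diffeomorphic to $[c_2\theta_2+c_3\theta_3+c_\mu\theta_\mu]_{S_\mu}$; crucially $X_1$ introduces no $\theta_3$ or $\theta_\mu$ component, so this step is clean.

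Next comes the case split that produces the constraint $\widetilde{c}_3\widetilde{c}_\mu=0$. Table \ref{infini s} gives $\mathcal{L}_{X_2}[\theta_3]=-\tfrac{3r^2}{2}[\theta_\mu]$, while $X_2=x_2E$ kills $\theta_2$ and $\theta_\mu$. If $c_3\ne0$ I would run a second homotopy with $V_t=b_2X_2$, $b_2=-2c_\mu/(3r^2c_3)$, along $\mathcal{B}_t=[c_2\theta_2+c_3\theta_3+(1-t)c_\mu\theta_\mu]_{S_\mu}$, eliminating $\theta_\mu$ and landing on $[c_2\theta_2+c_3\theta_3]_{S_\mu}$, so that $\widetilde{c}_\mu=0$. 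If $c_3=0$ there is nothing to do, since then $\widetilde{c}_3=0$ already. In either case the surviving form has $\widetilde{c}_3\widetilde{c}_\mu=0$. Finally, because $c_2\ne0$, the weighted scaling $\Psi:(x_1,x_2,x_3)\mapsto(c_2^{-r/(r+2)}x_1,c_2^{-r/(r+2)}x_2,c_2^{-2/(r+2)}x_3)$ preserves $S_\mu$ and pulls $c_2\theta_2$ back to $\theta_2$, yielding the asserted normal form $[\theta_2+\widetilde{c}_3\theta_3+\widetilde{c}_\mu\theta_\mu]_{S_\mu}$.

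The computations are routine once the table is in hand; the genuine point — and the reason the normal form carries the condition $\widetilde{c}_3\widetilde{c}_\mu=0$ rather than two independent moduli — is the observation that in this stratum the only infinitesimal handle on $\theta_\mu$ runs through $\theta_3$ via $X_2$. Thus $\theta_\mu$ can be absorbed exactly when $c_3\ne0$, and the main thing to verify carefully is that, once $c_5=\cdots=c_{\mu-1}=0$, no other tangent vector field from Table \ref{infini s} reaches $\theta_\mu$ from the available generators $\theta_2,\theta_3$, so that the dichotomy is genuinely forced.
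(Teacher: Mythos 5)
Your proposal is correct and follows essentially the same route as the paper: the same homotopy method with the tangent fields $X_1$ and $X_2$ from Table \ref{infini s}, the same dichotomy on $c_3$ producing the constraint $\widetilde{c}_3\widetilde{c}_{\mu}=0$, and the same final scaling $\Psi$; the only cosmetic difference is that you run two sequential one-field homotopies where the paper solves the combined $2\times 3$ linear system (\ref{proofslem23a}) in a single homotopy (using only its first equation when $c_3=0$).
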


\begin{proof}[Proof of Lemma \ref{slem2a}]
We use analogical methods as in the proof of the previous lemma. Now
$\mathcal{A}= [c_2 \theta_2+c_3 \theta_3+c_4 \theta_4+ c_{\mu}\theta_{\mu}]_{S_{\mu}}$. \par \noindent  When $c_3\ne 0$ let $\mathcal{B}_t=[c_2 \theta_2+c_3 \theta_3+(1-t)c_4 \theta_4+(1-t)c_{\mu} \theta_{\mu}]_{S_{\mu}}$ \; for $t \in[0;1]$. Then $\mathcal{B}_0=\mathcal{A}$\; and
\;$\mathcal{B}_1=[c_2 \theta_2+c_3 \theta_3]_{S_{\mu}}$.
 We prove that there exists a family $\Phi_t \in Symm(S_{\mu}),\;t\in [0;1]$ such that
 \begin{equation}
\label{proofslem21a}   \Phi_t^*\mathcal{B}_t=\mathcal{B}_0,\;\Phi_0=id.
\end{equation}
Let $V_t$ be a vector field defined by $\frac{d \Phi_t}{dt}=V_t(\Phi_t)$. Then differentiating (\ref{proofslem21a}) we obtain
  \begin{equation}
\label{proofslem22a}   \mathcal L_{V_t} \mathcal{B}_t=[c_4 \theta_4+{c}_{\mu} \theta_{\mu}]_{S_{\mu}}.
\end{equation}
We are looking for $V_t$ in the form $V_t=\sum_{k=1}^{3}b_k X_k$   where $b_k\in \mathbb{R}$.  Then by Proposition  \ref{s-infinitesimal} equation (\ref{proofslem22a})
has a form

\setlength{\arraycolsep}{1mm}
\begin{small}
\begin{equation}  \label{proofslem23a}
\left[ \begin{array}{ccc}
\!-(r\!+\!2)c_2 & 0 & rc_3  \\
0 & -\frac{3r^2}{2}c_3 & 0
\end{array} \right]
\left[ \begin{array}{c} b_1 \\ b_2 \\ b_3   \end{array} \right] =
\left[ \begin{array}{c} c_4  \\  c_{\mu}  \end{array}  \right]
\end{equation}
\end{small}

\noindent  If \;$ c_3\ne 0$ we can solve (\ref{proofslem23a}) and $\Phi_t$ may be obtained as a flow of vector field $V_t$.
The family $\Phi_t$ preserves $S_{\mu}$, because $V_t$ is tangent to $S_{\mu}$ and $\Phi_t^*\mathcal{B}_t=\mathcal{A}$.
Using the homotopy arguments we have that $\mathcal{A}$ is diffeomorphic to $ \mathcal{B}_1=[c_2 \theta_2+c_3 \theta_3]_{S_{\mu}}$.
By the condition $c_2\ne 0$ we have a diffeomorphism $\Psi \in Symm(S_{\mu})$ of the form
  \begin{equation}
\label{proofslem24a}
\Psi:\,(x_1,x_2,x_3)\mapsto (c_2^{-\frac{r}{r+2}} x_1,c_2^{-\frac{r}{r+2}} x_2,c_2^{-\frac{2}{r+2}} x_3)
\end{equation}
and we obtain
\[ \Psi^*(\mathcal{B}_1)=[\theta_2+ c_3 c_2^{-\frac{2r}{r+2}}\theta_3]_{S_{\mu}} =
 [ \theta_2+ \widetilde{c}_3\theta_3]_{S_{\mu}}.\]

 \noindent  In the case \;$ c_3= 0$ we take $\mathcal{B}_t=[c_2 \theta_2+(1-t)c_4 \theta_4+c_{\mu} \theta_{\mu}]_{S_{\mu}}$ \; for $t \in[0;1]$ and we
 can solve only the first equation of (\ref{proofslem23a}).
Using the homotopy arguments we have that $\mathcal{A}$ is diffeomorphic to $ \mathcal{B}_1=[c_2 \theta_2+c_{\mu} \theta_{\mu}]_{S_{\mu}}$.
Using the diffeomorphism (\ref{proofslem24a}) we obtain
\[ \Psi^*(\mathcal{B}_1)=[\theta_2+ c_{\mu} c_2^{-\frac{3r}{r+2}}\theta_{\mu}]_{S_{\mu}} =
 [ \theta_2+ \widetilde{c}_{\mu}\theta_{\mu}]_{S_{\mu}}.\]

 \end{proof}

\begin{lem}
\label{slem3} If \;$c_1= 0$\;and $c_2 = 0$ and $c_3 c_{4+k} \ne 0$ and $c_l=0$ for $5\leq l< 4+k$  then the algebraic restriction $\mathcal{A}=[ \sum_{l=1}^{\mu}c_{l} \theta_{l}]_{S_{\mu}}$ can be reduced by a symmetry of $S_{\mu}$ to an algebraic restriction $[\theta_3+ \widetilde{c}_{4+k}\theta_{4+k}+ +\widetilde{c}_{5+k} \theta_{5+k}]_{S_{\mu}}$.
\end{lem}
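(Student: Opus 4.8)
The plan is to follow the homotopy (path) method already used in Lemmas \ref{slem1}--\ref{slem2a}. Under the present hypotheses the restriction reads
$\mathcal{A}=[c_3\theta_3+c_4\theta_4+c_{4+k}\theta_{4+k}+c_{5+k}\theta_{5+k}+\sum_{l=6+k}^{\mu}c_l\theta_l]_{S_{\mu}}$
with $c_3\ne 0$, $c_{4+k}\ne 0$ (the forms $\theta_5,\dots,\theta_{3+k}$ being absent by assumption). The goal is to kill $\theta_4$ and every $\theta_l$ with $l\ge 6+k$, while keeping $\theta_3,\theta_{4+k},\theta_{5+k}$, and then to normalize the coefficient of $\theta_3$ to $1$ by a quasi-homogeneous rescaling $\Psi\in Symm(S_{\mu})$ of the type used in \eqref{proofslem14}.

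First I would write the connecting family $\mathcal{B}_t$, freezing $c_3\theta_3$ and $c_{4+k}\theta_{4+k}$, damping each term to be removed by a factor $(1-t)$, and --- this is the one genuinely new feature compared with Lemma \ref{slem2} --- letting the coefficient of $\theta_{5+k}$ evolve as $c_{5+k}+t\,d_{5+k}$ for a constant $d_{5+k}$ to be fixed during the computation. Imposing $\frac{d}{dt}(\Phi_t^*\mathcal{B}_t)=0$ for the flow $\Phi_t$ of a tangent field $V_t=\sum_{j=1}^{\mu-2}b_j(t)X_j$ and using Proposition \ref{s-infinitesimal} (Table \ref{infini s}), I obtain the homological equation
$\mathcal L_{V_t}\mathcal{B}_t=[c_4\theta_4-d_{5+k}\theta_{5+k}+\sum_{l\ge 6+k}c_l\theta_l]_{S_{\mu}}$.
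Expanding the left-hand side in the basis $\theta_l$ turns this into a linear system in the $b_j(t)$ which, as in \eqref{proofslem13} and \eqref{proofslem23}, has a triangular structure (with $(1-t)$-weighted off-diagonal entries) solvable successively.

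The solution proceeds as follows. Since $c_2=0$ removes the $X_1$-route to $\theta_4$ (the entry $-(r+2)c_2$ that was available in Lemma \ref{slem2}), the only source of $\theta_4$ is $X_3$ acting on $\theta_3$, giving the equation $r\,c_3\,b_3=c_4$, solvable because $c_3\ne 0$. That same $X_3$ sends $\theta_{4+k}$ to $(r+2(k+2))\theta_{5+k}$, so once $b_3$ is fixed the $\theta_{5+k}$-coefficient generated on the left is forced; I absorb it by setting $d_{5+k}=-(r+2(k+2))c_{4+k}\,b_3$, which is exactly why $\theta_{5+k}$ survives as a modulus. The remaining equations for $\theta_{6+k},\dots,\theta_{\mu}$ are then solved recursively for $b_4,b_5,\dots$, using that $X_{l+2}=x_3^lE$ sends $\theta_{4+k}$ to a nonzero multiple of $\theta_{4+k+l}$ and that $c_{4+k}\ne 0$; the already-determined contributions of $b_3$ (through $X_3$ on the floating $\theta_{5+k}$) merely shift the right-hand sides and do not spoil solvability. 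As each $X_j$ is tangent to $S_{\mu}$, the flow lies in $Symm(S_{\mu})$, so $\mathcal{A}$ is diffeomorphic to $\mathcal{B}_1=[c_3\theta_3+c_{4+k}\theta_{4+k}+(c_{5+k}+d_{5+k})\theta_{5+k}]_{S_{\mu}}$, and the final rescaling $\Psi$ normalizes $c_3$ to $1$, yielding $[\theta_3+\widetilde c_{4+k}\theta_{4+k}+\widetilde c_{5+k}\theta_{5+k}]_{S_{\mu}}$.

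The main obstacle is precisely this coupling. With $c_2=0$ one cannot remove $\theta_4$ without simultaneously producing $\theta_{5+k}$ through the single field $X_3$, so $\theta_{5+k}$ cannot be eliminated and must be carried along. Handling it correctly requires the extra floating coefficient $d_{5+k}$ in $\mathcal{B}_t$, together with the verification that inserting it leaves the higher-order equations for $\theta_{6+k},\dots,\theta_{\mu}$ triangular and solvable; this bookkeeping is the step demanding the most care. (For odd $\mu$ the same scheme applies after halving the coefficients in Table \ref{infini s}, as noted in the remark following Proposition \ref{s-infinitesimal}.)
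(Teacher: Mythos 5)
Your proof is correct and takes essentially the same route as the paper: the paper also exploits that with $c_1=c_2=0$ the only source of $\theta_4$ is $X_3$ acting on $\theta_3$ (hence the field $V=\frac{c_4}{rc_3}X_3$, requiring $c_3\ne 0$), likewise lets the coefficient of $\theta_{5+k}$ float because $X_3$ simultaneously maps $\theta_{4+k}$ to a multiple of $\theta_{5+k}$, and kills the tail $\theta_{6+k},\dots,\theta_{\mu}$ via a triangular system with diagonal entries proportional to $c_{4+k}\ne 0$ --- the only difference being that the paper organizes this as two successive homotopies (first the constant-field flow with ODE-evolving coefficients $\widetilde{c}_l(t)$, then a second Moser homotopy on the tail) rather than your single homotopy with one affine floating coefficient. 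The one detail you gloss over is the final normalization: a quasi-homogeneous rescaling with positive factors can only bring the $\theta_3$-coefficient to $\mathrm{sgn}(c_3)$, so, as in the paper's step via $\Psi$ in \eqref{proofslem34}, one must additionally use the symmetry $(x_1,x_2,x_3)\mapsto(-x_1,x_2,x_3)$ to replace $-\theta_3$ by $\theta_3$ (at the cost of flipping the signs of $\widetilde{c}_{4+k},\widetilde{c}_{5+k}$, which is harmless since they are moduli).
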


\begin{proof}[Proof of Lemma \ref{slem3}]
\par \noindent If $c_1=0, c_2=0$ and $c_3 \neq 0$ and $c_{4+k} \ne 0$ and $c_l=0$ for $5\leq l< 4+k$ then
$\mathcal{A}= [c_3 \theta_3+c_4 \theta_4+\sum_{l=4+k}^{\mu}c_{l} \theta_{l}]_{S_{\mu}}$. \par \noindent  Let
$\mathcal{B}_t=[c_3 \theta_3+(1-t)c_4 \theta_4+c_{4+k} \theta_{4+k}+  \sum_{l=5+k}^{\mu}\widetilde{c}_{l}(t) \theta_{l}]_{S_{\mu}}$
\;for $t \in[0;1]$ where $\widetilde{c}_{l}(t)$ are smooth functions $\widetilde{c}_{l}(t):[0;1]\rightarrow \mathbb{R}$ such that $\widetilde{c}_{l}(0)=c_{l}$. Then $\mathcal{B}_0=\mathcal{A}$\; and
\;$\mathcal{B}_1=[c_3 \theta_3+c_{4+k} \theta_{4+k}+\sum_{l=5+k}^{\mu}\widetilde{c}_{l}(1) \theta_{l}]_{S_{\mu}}$.

Let $\Phi_t ,\;t\in [0;1]$, be the flow of the vector field $V=\frac{c_4}{rc_3}X_3$. We show that there exist functions $\widetilde{c}_l$ such that
\begin{equation}
\label{proofslem31a}   \Phi_t^*\mathcal{B}_t=\mathcal{B}_0,\;\Phi_0=id.
\end{equation}
Then differentiating (\ref{proofslem31a}) we obtain
  \begin{equation}
\label{proofslem32a}   \mathcal L_{V} \mathcal{B}_t=[c_4 \theta_4 -\sum_{l=5+k}^{\mu}\frac{d\widetilde{c}_{l}}{dt} \theta_{l}]_{S_{\mu}}.
\end{equation}
We can find the $\widetilde{c}_l$ as solutions of the system of first order linear ODEs defined by (\ref{proofslem32a}) with the initial data $\widetilde{c}_{l}(0)=c_{l}$ for $l=5+k,\ldots,\mu$. This implies that $\mathcal{B}_0=\mathcal{A}$ and $\mathcal{B}_1=[c_3 \theta_3+c_{4+k} \theta_{4+k}+\sum_{l=5+k}^{\mu}\widetilde{c}_{l}(1) \theta_{l}]_{S_{\mu}}$ are diffeomorphic.
Denote $\hat{c}_l=\widetilde{c}_l(1)$ for $l=5+k,\ldots,\mu$.

Next let $\mathcal{C}_t=[c_3 \theta_3+c_{4+k} \theta_{4+k}+\hat{c}_{5+k} \theta_{5+k}+(1-t)\sum_{l=6+k}^{\mu}\hat{c}_{l} \theta_{l}]_{S_{\mu}}$
\;for $t \in[0;1]$.

Then $\mathcal{C}_0=\mathcal{B}_1$\; and
\;$\mathcal{C}_1=[c_3 \theta_3+c_{4+k} \theta_{4+k}+\hat{c}_{5+k} \theta_{5+k}]_{S_{\mu}}$.

 We prove that there exists a family $\Upsilon_t \in Symm(S_{\mu}),\;t\in [0;1]$ such that
 \begin{equation}
\label{proofslem31}   \Upsilon_t^*\mathcal{C}_t=\mathcal{C}_0,\;\Upsilon_0=id.
\end{equation}
Let $V_t$ be a vector field defined by $\frac{d \Upsilon_t}{dt}=V_t(\Upsilon_t)$. Then differentiating (\ref{proofslem31}) we obtain
  \begin{equation}
\label{proofslem32}   \mathcal L_{V_t} \mathcal{B}_t=[\sum_{l=6+k}^{\mu}\hat{c}_{l} \theta_{l}]_{S_{\mu}}.
\end{equation}
We are looking for $V_t$ in the form $V_t=\sum_{k=4}^{\mu-2}b_k(t) X_k$   where  $b_k(t)$ are smooth functions $b_k:[0;1]\rightarrow \mathbb{R}$ for $k=4,\ldots,\mu-2$.  Then by Proposition  \ref{s-infinitesimal} equation (\ref{proofslem32})
has a form

\setlength{\arraycolsep}{0.5mm}
\begin{small}
\begin{equation}  \label{proofslem33}
\left[ \begin{array}{cccccc}
 (r\!+\!2k+6)c_{k\!+\!4} & 0  & 0 \  & \cdots & \cdots & 0 \\
  (r\!+\!2k+8)\hat{c}_{k\!+\!5} & (r\!+\!2k+8)c_{k\!+\!4}  & 0 \  & \cdots & \cdots & 0 \\
  \vdots & \vdots & \ddots & 0 &\cdots & 0 \\
  3r(1-t)\hat{c}_{\mu-1} & 3r\hat{c}_{\mu-2}(1-t) &\cdots & 3rc_{k\!+\!4} & 0 \cdots & 0
\end{array} \right]
\left[ \begin{array}{c} b_4(t)  \\ b_5(t) \\ \vdots  \\ b_{\mu-2}(t)  \end{array} \right] =
\left[ \begin{array}{c} 
\hat{c}_{k+6} \\ \hat{c}_{k+7}\\ \vdots \\  \hat{c}_{\mu}  \end{array}  \right]
\end{equation}
\end{small}

\noindent If \;$c_{4+k}\ne 0$ we can solve (\ref{proofslem33}) and $\Upsilon_t$ may be obtained as a flow of vector field $V_t$.
The family $\Upsilon_t$ preserves $S_{\mu}$, because $V_t$ is tangent to $S_{\mu}$ and $\Upsilon_t^*\mathcal{C}_t=\mathcal{C}_0=\mathcal{B}_1$.
Using the homotopy arguments we have that $\mathcal{A}$ is diffeomorphic to $ \mathcal{B}_1$ and $ \mathcal{B}_1$ is diffeomorphic to $ \mathcal{C}_1$.
By the condition $c_3\ne 0$ we have a diffeomorphism $\Psi \in Symm(S_{\mu})$ of the form
  \begin{equation}
\label{proofslem34}
\Psi:\,(x_1,x_2,x_3)\mapsto (|c_3|^{-\frac{1}{2}} x_1,|c_3|^{-\frac{1}{2}} x_2,|c_3|^{-\frac{1}{r}} x_3)
\end{equation}
and we obtain
\[ \Psi^*(\mathcal{C}_1)=[ \frac{c_3}{|c_3|}\theta_3+\widetilde{c}_{4+k} \theta_{4+k}+\widetilde{c}_{5+k} \theta_{5+k}]_{S_{\mu}} =
 [ sgn(c_3)\theta_3+\widetilde{c}_{4+k} \theta_{4+k}+\widetilde{c}_{5+k} \theta_{5+k}]_{S_{\mu}}.\]

 \noindent By the following symmetry of $S_{\mu}$: $(x_1,x_2,x_3)\mapsto (-x_1, x_2, x_3)$, we have that
 $[ -\theta_3+\widetilde{c}_{4+k} \theta_{4+k}+\widetilde{c}_{5+k} \theta_{5+k}]_{S_{\mu}}$ is diffeomorphic to $[ \theta_3-\widetilde{c}_{4+k} \theta_{4+k}-\widetilde{c}_{5+k} \theta_{5+k}]_{S_{\mu}}$.

 \end{proof}

 \begin{lem}
\label{slem3a} If \;$c_1= 0$\;and $c_2 = 0$ and $c_3\ne 0$ and $c_l=0$ for $5\leq l< \mu-1$  then the algebraic restriction $\mathcal{A}=[ \sum_{l=1}^{\mu}c_{l} \theta_{l}]_{S_{\mu}}$ can be reduced by a symmetry of $S_{\mu}$ to an algebraic restriction $[\theta_3+ \widetilde{c}_{\mu-1} \theta_{\mu-1}]_{S_{\mu}}$.
\end{lem}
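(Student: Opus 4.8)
The plan is to follow the homotopy scheme of Lemmas \ref{slem3} and \ref{slem2a}. Under the hypotheses $c_1=c_2=0$ and $c_l=0$ for $5\le l<\mu-1$, the only surviving terms are those indexed by $3,4,\mu-1,\mu$; that is, $\mathcal{A}=[c_3\theta_3+c_4\theta_4+c_{\mu-1}\theta_{\mu-1}+c_\mu\theta_\mu]_{S_\mu}$ with $c_3\ne0$. Since the target is $[\theta_3+\widetilde{c}_{\mu-1}\theta_{\mu-1}]_{S_\mu}$, the task is to kill the two terms $c_4\theta_4$ and $c_\mu\theta_\mu$ and then normalize the coefficient of $\theta_3$.

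First I would set $\mathcal{B}_t=[c_3\theta_3+(1-t)c_4\theta_4+c_{\mu-1}\theta_{\mu-1}+(1-t)c_\mu\theta_\mu]_{S_\mu}$, so that $\mathcal{B}_0=\mathcal{A}$ and $\mathcal{B}_1=[c_3\theta_3+c_{\mu-1}\theta_{\mu-1}]_{S_\mu}$, and seek a family $\Phi_t\in Symm(S_\mu)$ with $\Phi_t^*\mathcal{B}_t=\mathcal{B}_0$. Differentiating yields the homological equation $\mathcal L_{V_t}\mathcal{B}_t=[c_4\theta_4+c_\mu\theta_\mu]_{S_\mu}$. The key observation, read from Table \ref{infini s}, is that $X_3=x_3E$ is the only tangent field producing $\theta_4$ from $\theta_3$ (as $\mathcal L_{X_3}[\theta_3]=r[\theta_4]$) while $X_2=x_2E$ produces $\theta_\mu$ from $\theta_3$ (as $\mathcal L_{X_2}[\theta_3]=-\tfrac{3r^2}{2}[\theta_\mu]$); the only cross term is $\mathcal L_{X_3}[\theta_{\mu-1}]=3r[\theta_\mu]$, and $\theta_4,\theta_\mu$ are annihilated by both $X_2$ and $X_3$, so the $(1-t)$-terms drop out of the Lie derivative. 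Hence with $V_t=b_2X_2+b_3X_3$ the equation becomes the $t$-independent linear system
\[\begin{bmatrix} 0 & rc_3 \\ -\tfrac{3r^2}{2}c_3 & 3rc_{\mu-1}\end{bmatrix}\begin{bmatrix} b_2 \\ b_3\end{bmatrix}=\begin{bmatrix} c_4 \\ c_\mu\end{bmatrix},\]
whose determinant $\tfrac{3r^3}{2}c_3^2$ is nonzero because $c_3\ne0$. Solving gives constants $b_2,b_3$, and the flow $\Phi_t$ of $V_t$ preserves $S_\mu$; the homotopy argument then identifies $\mathcal{A}$ with $\mathcal{B}_1$.

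Finally, to normalize I would apply the scaling symmetry $\Psi\colon(x_1,x_2,x_3)\mapsto(|c_3|^{-1/2}x_1,|c_3|^{-1/2}x_2,|c_3|^{-1/r}x_3)$ exactly as in Lemma \ref{slem3}, which sends $[c_3\theta_3+c_{\mu-1}\theta_{\mu-1}]_{S_\mu}$ to $[\mathrm{sgn}(c_3)\theta_3+\widetilde{c}_{\mu-1}\theta_{\mu-1}]_{S_\mu}$, and, if $c_3<0$, the involution $(x_1,x_2,x_3)\mapsto(-x_1,x_2,x_3)$ to replace $-\theta_3$ by $\theta_3$ (this also flips the sign of $\widetilde{c}_{\mu-1}$, which is harmless). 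The only point demanding care is the cross term $\mathcal L_{X_3}[\theta_{\mu-1}]=3r[\theta_\mu]$: because the system is triangular---one first reads off $b_3$ from the $\theta_4$-equation and then $b_2$ from the $\theta_\mu$-equation---and $c_3\ne0$, it poses no obstruction, making this the most straightforward of the reduction lemmas.
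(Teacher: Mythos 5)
Your proposal is correct and takes essentially the same route as the paper: the homological equation collapses to exactly the $2\times 2$ system displayed as equation (\ref{proofslem33a}), solvable since its determinant $\tfrac{3r^3}{2}c_3^2$ is nonzero when $c_3\ne 0$, followed by the scaling symmetry and the sign involution borrowed from Lemma \ref{slem3}. You have simply filled in the details (the $t$-independence of the Lie derivative, the triangular solution for $b_3$ then $b_2$, and the harmless sign flip of $\widetilde{c}_{\mu-1}$) that the paper leaves implicit in its remark that the proof is very similar to the previous case.
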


\begin{proof}[Proof of Lemma \ref{slem3a}]

The prove of this lemma is very similar to previous case. It suffices to notice that if $c_3\ne 0$ we can solve the following equation
\setlength{\arraycolsep}{1mm}
\begin{small}
\begin{equation}  \label{proofslem33a}
\left[ \begin{array}{cc}
 0 & rc_3  \\
 -\frac{3r^2}{2}c_3 & 3rc_{\mu-1}
\end{array} \right]
\left[ \begin{array}{c}  b_2 \\ b_3   \end{array} \right] =
\left[ \begin{array}{c} c_4  \\  c_{\mu}  \end{array}  \right]
\end{equation}
\end{small}
\end{proof}

\begin{lem}
\label{slem4} If \;$c_1=c_2=c_3= 0$\; and $c_{4+k} \ne 0$ and $c_l=0$ for $5\leq l< 4+k$  then the algebraic restriction $\mathcal{A}=[ \sum_{l=1}^{\mu}c_{l} \theta_{l}]_{S_{\mu}}$ can be reduced by a symmetry of $S_{\mu}$ to an algebraic restriction $[c_4\theta_4+ c_{4+k} \theta_{4+k}]_{S_{\mu}}$.
\end{lem}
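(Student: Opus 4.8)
The plan is to reduce $\mathcal{A}$ by the homotopy method, exactly in the spirit of the proofs of Lemmas \ref{slem1}--\ref{slem3}, but now the surviving terms are $c_4\theta_4$ and $c_{4+k}\theta_{4+k}$ and there is no concluding rescaling. Under the hypotheses one has $\mathcal{A}=[c_4\theta_4+c_{4+k}\theta_{4+k}+\sum_{l=5+k}^{\mu}c_l\theta_l]_{S_{\mu}}$, so the only thing to kill is the tail $\sum_{l=5+k}^{\mu}c_l\theta_l$. First I would set
\[\mathcal{B}_t=\Bigl[c_4\theta_4+c_{4+k}\theta_{4+k}+(1-t)\textstyle\sum_{l=5+k}^{\mu}c_l\theta_l\Bigr]_{S_{\mu}},\quad t\in[0;1],\]
so that $\mathcal{B}_0=\mathcal{A}$ and $\mathcal{B}_1=[c_4\theta_4+c_{4+k}\theta_{4+k}]_{S_{\mu}}$, and look for a family $\Phi_t\in Symm(S_{\mu})$ with $\Phi_t^*\mathcal{B}_t=\mathcal{B}_0$ and $\Phi_0=\mathrm{id}$. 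Writing $V_t$ for the generating field, differentiation gives $\mathcal L_{V_t}\mathcal{B}_t=[\sum_{l=5+k}^{\mu}c_l\theta_l]_{S_{\mu}}$.

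The key observation is that, because $c_1=c_2=c_3=0$, the only surviving basis form that can be pushed up in index is $\theta_{4+k}$: by Table \ref{infini s} the fields $X_{l+2}=x_3^lE$ send $[\theta_{4+k}]$ to $(r+2(k+l+1))[\theta_{4+k+l}]$ for $1\le l\le r-1-k$, while every $X_j$ with $j\ge1$ annihilates $[\theta_4]$, so the preserved part is never disturbed and the tail is never fed back into the indices $\le 4+k$. Hence I would take $V_t=\sum_{j=3}^{\mu-2-k}b_j(t)X_j$ (i.e.\ $l=1,\dots,r-1-k$): the generator $X_{l+2}$ produces $\theta_{4+k+l}$ with leading coefficient proportional to $c_{4+k}$, and each field acting on the lower-order surviving forms only raises the index further. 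Via Proposition \ref{s-infinitesimal} the equation $\mathcal L_{V_t}\mathcal{B}_t=[\sum_{l=5+k}^{\mu}c_l\theta_l]_{S_{\mu}}$ therefore becomes a square, lower-triangular linear system in the $b_j(t)$ with diagonal entries the nonzero multiples $(r+2(k+l+1))c_{4+k}$ --- precisely the triangular shape already exploited in system (\ref{proofslem33}) of Lemma \ref{slem3}. A count confirms squareness: there are $\mu-4-k$ forms $\theta_{5+k},\dots,\theta_{\mu}$ to cancel and exactly $\mu-4-k$ fields $X_3,\dots,X_{\mu-2-k}$.

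Since $c_{4+k}\ne0$ the diagonal never vanishes, uniformly in $t$, so the system is solvable for every $t\in[0;1]$ with smooth $b_j(t)$; the flow $\Phi_t$ of $V_t$ is then a symmetry of $S_{\mu}$ (as $V_t$ is tangent to $S_{\mu}$), and the homotopy argument yields $\mathcal{A}$ diffeomorphic to $\mathcal{B}_1=[c_4\theta_4+c_{4+k}\theta_{4+k}]_{S_{\mu}}$, with no further normalization needed. I expect the only genuine point to verify is the triangular structure together with the nonvanishing of the diagonal: one must check that with $c_1=c_2=c_3=0$ no field entering $V_t$ creates a $\theta_l$ with $l\le 4+k$ and that the index-raising entries of Table \ref{infini s} are the sole contributions, which is exactly the lower-triangular form used above.
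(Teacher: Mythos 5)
Your proof is correct and takes essentially the same route as the paper's: the identical homotopy $\mathcal{B}_t$ killing the tail, the same linearized equation $\mathcal L_{V_t}\mathcal{B}_t=[\sum_{l=5+k}^{\mu}c_l\theta_l]_{S_{\mu}}$, and the same lower-triangular system with diagonal entries proportional to $c_{4+k}\ne 0$ --- the paper's system (\ref{proofslem43}) simply retains the trivially acting fields $X_{\mu-1-k},\dots,X_{\mu-2}$ as zero columns where you drop them to get a square system. The paper's proof continues with the rescalings (\ref{proofslem44}) and (\ref{proofslem45}) to normalize $c_4$ or $c_{4+k}$, but that step serves the normal forms of Table \ref{tabs} and is not required by the lemma as stated, so your argument fully covers it.
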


\begin{proof}[Proof of Lemma \ref{slem4}]
We use similar methods as above to prove that lemma. In this case
$\mathcal{A}\!= [c_4 \theta_4+\sum_{l=4+k}^{\mu}c_{l} \theta_{l}]_{S_{\mu}}$.   Let $\mathcal{B}_t\!=[c_4 \theta_4+c_{4+k} \theta_{4+k}\!+(1\!-\!t)\sum_{l=5+k}^{\mu}c_{l} \theta_{l}]_{S_{\mu}}$ \; for $t \in[0;1]$. Then $\mathcal{B}_0=\mathcal{A}$\; and
\;$\mathcal{B}_1=[c_4 \theta_4+c_{4+k} \theta_{4+k}]_{S_{\mu}}$.
 We prove that there exists a family $\Phi_t \in Symm(S_{\mu}),\;t\in [0;1]$ such that
 \begin{equation}
\label{proofslem41}   \Phi_t^*\mathcal{B}_t=\mathcal{B}_0,\;\Phi_0=id.
\end{equation}
Let $V_t$ be a vector field defined by $\frac{d \Phi_t}{dt}=V_t(\Phi_t)$. Then differentiating (\ref{proofslem41}) we obtain
  \begin{equation}
\label{proofslem42}   \mathcal L_{V_t} \mathcal{B}_t=[\sum_{l=5+k}^{\mu}c_{l} \theta_{l}]_{S_{\mu}}.
\end{equation}
We are looking for $V_t$ in the form $V_t=\sum_{k=3}^{\mu-2}b_k(t) X_k$   where $b_k(t)$ are smooth functions $b_k:[0;1]\rightarrow \mathbb{R}$ for $k=3,\ldots,\mu-2$.  Then by Proposition  \ref{s-infinitesimal} equation (\ref{proofslem42})
has a form

\setlength{\arraycolsep}{0.3mm}
\begin{small}
\begin{equation}  \label{proofslem43}
\left[ \begin{array}{ccccccc}

 (r\!+\!2k+4)c_{k\!+\!4} & & 0 & 0 \  & \cdots & \cdots & 0 \\
  (r\!+\!2k\!+\!6)c_{k\!+\!5}(1-t) & & (r\!+\!2k\!+\!6)c_{k\!+\!4} & 0 \  & \cdots & \cdots & 0 \\
 \vdots \;\; \;\ddots &  &  \ddots & \ddots & 0 &\cdots & 0 \\
 3rc_{\mu-1}(1-t) & & 3rc_{\mu-2}(1-t) & \cdots & 3rc_{k\!+\!4} & 0 \cdots & 0
\end{array} \right]
\left[ \begin{array}{c}  b_3(t) \\ \vdots  \\ b_{\mu-2}(t)  \end{array} \right] =
\left[ \begin{array}{c}  c_{k+5} \\ c_{k+6} \\ \vdots \\  c_{\mu}  \end{array}  \right]
\end{equation}
\end{small}

\noindent If \;$ c_{4+k}\ne 0$ we can solve (\ref{proofslem43}) and $\Phi_t$ may be obtained as a flow of vector field $V_t$.
The family $\Phi_t$ preserves $S_{\mu}$, because $V_t$ is tangent to $S_{\mu}$ and $\Phi_t^*\mathcal{B}_t=\mathcal{A}$.
Using the homotopy arguments we have that $\mathcal{A}$ is diffeomorphic to $ \mathcal{B}_1=[c_4 \theta_4+c_{4+k} \theta_{4+k}]_{S_{\mu}}$.

\bigskip

When $c_4\ne 0$ we have a diffeomorphism $\Psi \in Symm(S_{\mu})$ of the form
  \begin{equation}
\label{proofslem44}
\Psi:\,(x_1,x_2,x_3)\mapsto (|c_4|^{-\frac{r}{2r+2}} x_1,|c_4|^{-\frac{r}{2r+2}} x_2,|c_4|^{-\frac{2}{2r+2}} x_3)
\end{equation}
and we obtain
\[ \Psi^*(\mathcal{B}_1)=[sgn(c_4)\theta_4+c_{4+k} |c_4|^{-(\frac{2k+r+2}{2r+2})} \theta_{4+k}]_{S_{\mu}} =
 [ \pm\theta_4+ \widetilde{c}_{4+k} \theta_{4+k}]_{S_{\mu}}.\]

 \noindent By the following symmetry of $S_{\mu}$: $(x_1,x_2,x_3)\mapsto (-x_1, x_2, x_3)$, we have that
 $[ -\theta_4+\widetilde{c}_{4+k} \theta_{4+k}]_{S_{\mu}}$ is diffeomorphic to $[ \theta_4-\widetilde{c}_{4+k} \theta_{4+k}]_{S_{\mu}}$.

 \bigskip

  When $c_{4+k}\ne0$ then  we may use a diffeomorphism $\Psi_1\in Symm(S_{\mu})$ of the form
  \begin{equation}
\label{proofslem45}
\Psi_1:\,(x_1,x_2,x_3)\mapsto (c_{4+k}^{-\frac{r}{2k+r+2}} x_1,c_{4+k}^{-\frac{r}{2k+r+2}} x_2,c_{4+k}^{-\frac{2}{2k+r+2}} x_3)
\end{equation}
and we obtain
\[ \Psi_1^*(\mathcal{B}_1)=[c_4 c_{4+k}^{-(\frac{2r+2}{2k+r+2})}\theta_4 + \theta_{4+k}]_{S_{\mu}}=[ \widetilde{c}_4\theta_4+\theta_{4+k}]_{S_{\mu}}.\]


 \end{proof}

Statement $(ii)$ of Theorem \ref{klas_s} follows   from Theorem \ref{geom-cond-s}.

 \bigskip

$(iii)$ Now we prove that the parameters $c_i$ are moduli in the normal forms. The proofs are very similar in all cases. We consider as an example
the normal form with two parameters $[\theta_1+c_2\theta_2+c_3\theta_3]_{S_{\mu}}$. From Table \ref{infini s} we see that the tangent space to the orbit
of $[\theta_1+c_2\theta_2+c_3\theta_3]_{S_{\mu}}$ at $[\theta_1+c_2\theta_2+c_3\theta_3]_{S_{\mu}}$ is spanned by the linearly independent algebraic restrictions
$[r\theta_1+rc_2\theta_2+2c_3\theta_3]_{S_{\mu}}$, $[\theta_4]_{S_{\mu}},[\theta_5]_{S_{\mu}}, \ldots, [\theta_{\mu}]_{S_{\mu}}.$ Hence the algebraic restrictions
$[\theta_2]_{S_{\mu}}$ and $[\theta_3]_{S_{\mu}}$ do not belong to it. Therefore the parameters $c_2$ and $c_3$ are independent moduli in the normal form
$[\theta_1+c_2\theta_2+c_3\theta_3]_{S_{\mu}}$.

\medskip

Statement $(iv)$ of Theorem \ref{klas_s} follows from conditions
in the proof of part $(i)$. 

\end{proof}

\bibliographystyle{amsalpha}

\end{document}